\newcommand\myeq{\mathrel{\stackrel{\makebox[0pt]{\mbox{\normalfont\tiny loc}}}{=}}}
\newcommand\opteq[1]{\mathrel{\mathpalette\opt@eq{#1}}}
\newcommand{\opt@eq}[2]{%
  \begingroup
  \sbox\z@{$#1#2$}%
  \sbox\tw@{\resizebox{!}{.5\ht\z@}{$\m@th#1($}}%
  \nonscript\hskip-\wd\tw@
  \mkern1mu
  \raisebox{-.35\ht\z@}[0pt][0pt]{\resizebox{!}{.5\ht\z@}{$\m@th#1($}}%
  \mkern-1mu
  {#2}%
  \mkern-1mu
  \raisebox{-.35\ht\z@}[0pt][0pt]{\resizebox{!}{.5\ht\z@}{$\m@th#1)$}}%
  \mkern1mu
  \nonscript\hskip-\wd\tw@
  \endgroup
}
\newcommand{\geoq}{\opteq{\geq}}
\def\Xint#1{\mathchoice
   {\XXint\displaystyle\textstyle{#1}}%
   {\XXint\textstyle\scriptstyle{#1}}%
   {\XXint\scriptstyle\scriptscriptstyle{#1}}%
   {\XXint\scriptscriptstyle\scriptscriptstyle{#1}}%
   \!\int}
\def\XXint#1#2#3{{\setbox0=\hbox{$#1{#2#3}{\int}$}
     \vcenter{\hbox{$#2#3$}}\kern-.5\wd0}}
\def\dashint{\Xint-}
\newtheorem{theorem}{Theorem}[section]
\newtheorem*{theorem*}{Theorem}
\newtheorem{theorem-non}{Theorem}
\newtheorem{lemma-non}{Lemma}
\theoremstyle{definition} 
\newtheorem{thm}{Theorem}
\theoremstyle{definition}
\newtheorem{conjecture-non}{Conjecture}
\newtheorem{corollary-non}[thm]{Corollary}
\newtheorem{proposition}[theorem]{Proposition}
\newtheorem{lemma}[theorem]{Lemma}
\newtheorem*{lemma*}{Lemma}
\newtheorem{corollary}[theorem]{Corollary}
\newtheorem*{conjecture*}{Conjecture}
\newtheorem{problem}[theorem]{Problem}
\theoremstyle{definition}
\newtheorem{definition}[theorem]{Definition}
\newtheorem{example}[theorem]{Example}
\theoremstyle{remark}
\newtheorem{remark}[theorem]{Remark}
\DeclareMathOperator{\rank}{rank}
\numberwithin{equation}{section}
\begin{document}
\title{Prescribed mean curvature problems on homogeneous vector bundles}

\author{Eder M. Correa}


\address{{IMECC-Unicamp, Departamento de Matem\'{a}tica. Rua S\'{e}rgio Buarque de Holanda 651, Cidade Universit\'{a}ria Zeferino Vaz. 13083-859, Campinas-SP, Brazil}}
\address{E-mail: {\rm ederc@unicamp.br}}

\begin{abstract}
In this paper, we investigate the existence of weak singular Hermite-Einstein structures on homogeneous holomorphic vector bundles over rational homogeneous varieties. Using Cartan's highest weight theory, we establish an explicit algebraic criterion for a homogeneous vector bundle ${\bf{E}}$ to admit a topological splitting ${\bf{E}} \cong {\bf{E}}_{0} \otimes {\bf{L}}_{0}$, where ${\bf{L}}_{0} \in {\rm{Pic}}(X)$ and $c_{1}({\bf{E}}_{0}) = 0$. When this condition is satisfied, the prescribed mean curvature equation completely decouples. By shifting the topological obstruction entirely to the line bundle ${\bf{L}}_{0}$, this splitting reduces the non-abelian prescribed mean curvature problem on ${\bf{E}}$ to Demailly's abelian theory of singular line bundle metrics. As a main application, we obtain a sufficient algebraic condition, expressed in terms of intersection numbers, under which an $L^{2}$-function can be realized as the mean curvature of a singular Hermitian structure on an irreducible homogeneous bundle. Ultimately, by overcoming the bounded curvature restrictions inherent to the classical Bando-Siu framework, this approach provides a robust mechanism to construct singular Hermitian structures accommodating prescribed singularities along analytic subvarieties.
\end{abstract}

\maketitle

\hypersetup{linkcolor=blue}
\tableofcontents

\hypersetup{linkcolor=black}

\section{Introduction} 
Given a Hermitian holomorphic vector bundle $({\bf{E}},{\bf{h}})$ over a compact K\"{a}hler manifold $(X,\omega)$, we have (locally) $F({\bf{h}}) = (F({\bf{h}})_{ij})$, such that 
\begin{equation}
F({\bf{h}})_{ij} = \sum_{\alpha,\beta} R^{i}_{j\alpha \overline{\beta}}{\rm{d}}z_{\alpha} \wedge {\rm{d}}\overline{z_{\beta}},
\end{equation}
where $F({\bf{h}})$ is the curvature of the underlying Chern connection. From this, we can define the $\omega$-trace of $F({\bf{h}})$ as being the endomorphism $\Lambda_{\omega}(F({\bf{h}})) = (\Lambda_{\omega}(F({\bf{h}})_{ij}))$, such that 
\begin{equation}
\Lambda_{\omega}(F({\bf{h}})_{ij}) = \sum_{\alpha,\beta}g^{\alpha \overline{\beta}} R^{i}_{j\alpha \overline{\beta}},
\end{equation}
here we denote by $g^{-1} = (g^{\alpha \overline{\beta}})$ the inverse matrix of the associated K\"{a}hler metric $g = \omega({\rm{1}} \otimes J)$. From above, we can define the following notions of curvature (e.g. \cite{Kobayashi+1987}):
\begin{enumerate}
\item we define the mean curvature of $({\bf{E}},{\bf{h}})$ as being ${\rm{K}}({\bf{E}},{\bf{h}}) := \sqrt{-1}\Lambda_{\omega}(F({\bf{h}})) $,
\item the scalar curvature of $({\bf{E}},{\bf{h}})$ is defined by $\sigma({\bf{E}},{\bf{h}}) := \sqrt{-1}{\rm{tr}}\big (\Lambda_{\omega}(F({\bf{h}})) \big )$.
\end{enumerate}
In the above setting, if
\begin{equation}
\label{WHYM}
{\rm{K}}({\bf{E}},{\bf{h}}) = \varphi 1_{{\bf{E}}},
\end{equation}
for some smooth function $\varphi \in C^{\infty}(X)$, we say that $({\bf{E}},{\bf{h}})$ is weak Hermite-Einstein. As can be seen, if $({\bf{E}},{\bf{h}})$ is weak Hermite-Einstein, then $\sigma({\bf{E}},{\bf{h}}) = r\varphi$, where $r = \rank({\bf{E}})$. Also, if $\varphi \equiv c$, such that $c \in \mathbbm{R}$, then $({\bf{E}},{\bf{h}})$ satisfying Eq. (\ref{WHYM}) is said to be Hermite-Einstein. In this last case, the underlying Chern connection is called Hermitian-Yang-Mills.

Given an indecomposable holomorphic vector bundle ${\bf{E}}$ over a compact K\"{a}hler manifold $(X,\omega)$, it is known that {\bf{E}} admits an Hermitian-Yang-Mills (HYM) connection if, and only if, {\bf{E}} is stable in the sense of Mumford-Takemoto. This result, also known as Kobayashi-Hitchin correspondence, was independently conjectured in the 1980s by Kobayashi \cite{kobayashi1982curvature} and Hitchin \cite{kotake1979non}. The Kobayashi-Hitchin correspondence was proved for complex curves by Narasimhan and Seshadri \cite{narasimhan1965stable}, and then in full generality by Donaldson \cite{donaldson1985anti} and Uhlenbeck-Yau \cite{uhlenbeck1986existence}. 

In \cite{bando1994stable}, Bando and Siu established the existence of admissible singular Hermite-Einstein metrics on stable reflexive sheaves. Their machinery generally requires the mean curvature to remain bounded, tackling the non-abelian PDE by resolving topological singularities of codimension $\geq 2$. For more results on singular Hermitian structures, we suggest \cite{damailly1992singular}, \cite{cataldo1998singular}, \cite{raufi2015singular}.

In Riemannian geometry, a classical problem that can be traced back to the work of Kazdan-Warner \cite{kazdan1975direct}, \cite{kazdan1975existence}, \cite{kazdan1975scalar}, consists of finding admissible functions as the scalar curvature of Riemannian metrics. More precisely, the problem is as follows: given a closed smooth manifold $M$ and a smooth function $f \in C^{\infty}(M)$, construct a Riemannian metric on $M$ whose scalar curvature equals $f$. 

Based on these ideas, in this paper we study the following Kazdan-Warner-type problem in the setting of homogeneous Hermitian vector bundles over rational varieties.
\begin{problem}
\label{P1}
For some prescribed $f \in L^{2}(X,\mu_{\omega})$, find conditions under which there exists a (possibly singular) Hermitian structure ${\bf{h}}$ on a holomorphic vector bundle ${\bf{E}} \to (X,\omega)$, such that 
\begin{equation}
\label{KWHVB}
{\rm{K}}({\bf{E}},{\bf{h}}) = f 1_{\bf{E}},
\end{equation}
in the sense of distributions.
\end{problem} 
Here, by following \cite{10.1215/00127094-2008-054}, we assume that a singular Hermitian metric ${\bf{h}}$ on ${\bf{E}}$ is a measurable map from the base space $X$ to the space of non-negative Hermitian forms on the fibers.

The primary obstruction to solving Problem \ref{P1} lies in the analytic pathology of non-abelian gauge groups. For higher-rank bundles, the curvature $F({\bf{h}}) = \bar{\partial}({\bf{h}}^{-1}\partial {\bf{h}})$ inherently involves the multiplication of matrix components. If we allow the metric to develop singularities to accommodate a rough target $f \in L^{2}(X, \mu_{\omega})$, we are forced to multiply distributions, an operation that is generally undefined. This highly coupled non-linearity renders the general prescribed mean curvature equation intractable via standard singular current theory, which is strictly abelian in nature.

Although it is not immediate how to make sense of the curvature of a singular Hermitian metric ${\bf{h}}$, it is nevertheless still possible to define what it means in some cases, see for instance \cite{damailly1992singular}, \cite[\S 2.2]{cataldo1998singular}, and \cite[\S 3]{raufi2015singular}.

Our strategy to tackle Problem \ref{P1} in the homogeneous setting is based on the following idea. Suppose that a holomorphic vector bundle ${\bf{E}} \to (X,\omega)$ has a splitting of the form ${\bf{E}} \cong {\bf{E}}_{0} \otimes {\bf{L}}_{0}$, such that ${\bf{L}}_{0} \in {\rm{Pic}}(X)$. In this situation, by considering ${\bf{h}} = {\bf{h}}_{{\bf{E}}_{0}} \otimes {\bf{h}}_{{\bf{L}}_{0}}$, since 
\begin{equation}
{\rm{K}}({\bf{E}},{\bf{h}}) = {\rm{K}}({\bf{E}}_{0},{\bf{h}}_{{\bf{E}}_{0}}) \otimes 1_{{\bf{L}}_{0}} + 1_{{\bf{E}}_{0}} \otimes {\rm{K}}({\bf{L}}_{0},{\bf{h}}_{{\bf{L}}_{0}}),
\end{equation}
the mean curvature equation completely decouples. Hence, we can solve Problem \ref{P1} for some $f \in L^{2}(X,\mu_{\omega})$ if we find a solution for the following system of equations
\begin{equation}
\label{systemdecupled}
\begin{cases}
{\rm{K}}({\bf{E}}_{0},{\bf{h}}_{{\bf{E}}_{0}}) = 0,\\
{\rm{K}}({\bf{L}}_{0},{\bf{h}}_{{\bf{L}}_{0}}) = f1_{{\bf{L}}_{0}}.
\end{cases}
\end{equation}
The first equation in the system above is the classical Hermitian-Yang-Mills equation. It imposes the condition that $c_{1}({\bf{E}}_{0}) = 0$. The second equation in the system above is the weak Hermite-Einstein equation. For line bundles, it reduces to a Poisson equation 
\begin{equation}
\Delta_{\omega}u + \rho = 0,
\end{equation}
on the compact manifold $(X,\omega)$, which can be solved by standard methods. From this, we study the following associated problem:
\begin{problem}
\label{P2}
Given a holomorphic vector bundle ${\bf{E}} \to (X,\omega)$, under what conditions do we have a splitting ${\bf{E}} = {\bf{E}}_{0} \otimes {\bf{L}}_{0}$, such that ${\bf{L}}_{0} \in {\rm{Pic}}(X)$ and ${\rm{K}}({\bf{E}}_{0},{\bf{h}}_{{\bf{E}}_{0}}) = 0$?
\end{problem}

The main contribution of this work is to provide a complete answer to Problem \ref{P2} in the homogeneous setting by establishing an explicit algebraic criterion based on Cartan's highest weight theory. By achieving this topological splitting, we completely decouple the non-abelian mean curvature equation. As a consequence, we systematically bypass the analytic pathologies inherent to Problem \ref{P1}, reducing it to Demailly's abelian theory of singular line bundle metrics. This framework not only makes the prescribed mean curvature problem completely solvable via standard tools from spectral geometry, but also provides a robust mechanism to construct singular Hermitian structures with prescribed singularities along analytic subvarieties.

\subsection{Main Results}
In order to state our main results, let us recall some basic generalities on rational homogeneous varieties. 

A rational homogeneous variety can be described as a quotient $X_{P} = G^{\mathbbm{C}}/P$, where $G^{\mathbbm{C}}$ is a semisimple complex algebraic group with Lie algebra $\mathfrak{g}^{\mathbbm{C}} = {\rm{Lie}}(G^{\mathbbm{C}})$, and $P$ is a parabolic Lie subgroup (e.g. \cite{BorelRemmert}). Considering $G^{\mathbbm{C}}$ as a complex analytic space, without loss of generality, we assume that $G^{\mathbbm{C}}$ is a connected simply connected complex simple Lie group. Fixed a compact real form $G \subset G^{\mathbbm{C}}$, we can also consider the underlying reductive homogeneous $G$-space provided by the quotient $X_{P} = G/K$, where $K = G \cap P$. 

By choosing a Cartan subalgebra $\mathfrak{h} \subset \mathfrak{g}^{\mathbbm{C}}$ and a simple root system $\Delta \subset \mathfrak{h}^{\ast}$, up to conjugation, we have the underlying parabolic Lie subgroup $P \subset G^{\mathbbm{C}}$ completely determined by some subset of simple roots $I \subset \Delta$, e.g. \cite[\S 3.1]{Akhiezer}. Considering the associated fundamental weights $\varpi_{\alpha} \in \mathfrak{h}^{\ast}$, $\alpha \in \Delta$, it follows that 
\begin{equation}
{\rm{Pic}}(X_{P}) \cong H^{1,1}(X_{P},\mathbbm{Z}) \cong \Lambda_{P}:= \bigoplus_{\alpha \in \Delta \backslash I}\mathbbm{Z}\varpi_{\alpha},
\end{equation}
see for instance \cite{correa2023deformed}. From the aforementioned isomorphisms, we have a map\footnote{We also can define $[\omega] \in H^{1,1}(X_{P},\mathbbm{R}) \mapsto \lambda([\omega])  \in \Lambda_{P} \otimes \mathbbm{R}$.}
\begin{equation}
\label{map11character}
[\omega] \in H^{1,1}(X_{P},\mathbbm{Z}) \mapsto \lambda([\omega]) \in \Lambda_{P}.
\end{equation}
In particular, we can set $\lambda({\bf{E}}) := \lambda(c_{1}({\bf{E}})) \in \Lambda_{P}$, for every holomorphic vector bundle ${\bf{E}} \to X_{P}$. 

Given an irreducible $P$-module $\theta \colon P \to {\rm{GL}}(W)$, we can form a holomorphic homogeneous vector bundle by setting 
\begin{equation}
{\bf{E}} = G^{\mathbbm{C}} \times_{P}W.
\end{equation}
Considering the Levi decomposition
\begin{equation}
P = L_{P} \ltimes R_{u}(P)^{+},
\end{equation}
it follows that the irreducible representations of $P$ are completely classified by the irreducible representations of its Levi component $L_{P}$. Since $L_{P}$ is a reductive Lie group, considering its associated root datum
\begin{equation}
(\mathbbm{X}(T^{\mathbbm{C}}),\langle I \rangle, \alpha \mapsto \alpha^{\vee}),
\end{equation}
it follows that the set of isomorphism classes of irreducible representations of $L_{P}$ is parametrized by the integral weights 
\begin{equation}
{\rm{Rep}}(P)_{\text{irrd.}} = \Lambda_{\mathfrak{s}_{P}}^{+} \oplus \Lambda_{P},
\end{equation}
where 
\begin{equation}
\Lambda_{\mathfrak{s}_{P}}^{+} = \bigoplus_{\alpha \in I}\mathbbm{Z}_{\geq 0}\varpi_{\alpha}
\end{equation}
is the monoid of integral dominant weights of the semisimple component $S_{P} = [L_{P},L_{P}]$ of $L_{P}$. 

By considering the above data, we prove the following auxiliary theorem of independent interest which provides a complete answer for Problem \ref{P2} in the homogeneous setting.

\begin{thm}
\label{theoremA}
Let ${\bf{E}}\to  X_{P}$ be a homogeneous vector bundle defined by some finite-dimensional irreducible representation $\theta \colon P \to {\rm{GL}}(W(\lambda))$. Considering $\lambda = \lambda_{s} + \lambda_{c}$, such that $\lambda_{s} \in \Lambda_{\mathfrak{s}_{P}}^{+}$ and $\lambda_{c} \in \Lambda_{P}$, we have the following equivalent conditions:
\begin{enumerate}
\item[(A)] ${\bf{E}} \cong {\bf{E}}_{0} \otimes {\bf{L}}_{0}$, such that ${\bf{L}}_{0} \in {\rm{Pic}}(X_{P})$ and $c_{1}({\bf{E}}_{0}) = 0$,
\item[(B)] $\displaystyle \frac{1}{{\rm{rank}}({\bf{E}})} \int_{C}c_{1}({\bf{E}}) \in \mathbbm{Z}, \forall [C] \in {\rm{NE}}(X_{P})_{\mathbbm{Z}}$,
\item[(C)] For every $\beta \in \Delta \backslash I$, we have 
\begin{equation}
\sum_{\alpha \in I}\frac{\det(C_{I}(\lambda_{s},\alpha))}{\det(C_{I})} \langle \alpha,\beta^{\vee} \rangle \in \mathbbm{Z},
\end{equation}
\end{enumerate}
where $C_{I} = (\langle \alpha,\beta^{\vee} \rangle)_{\alpha,\beta \in I}$ and $C_{I}(\lambda_{s},\alpha)$ is the matrix obtained from the Cartan matrix $C_{I}$ replacing its $\alpha$-row by the row vector $(\langle \lambda_{s},\beta^{\vee} \rangle)_{\beta \in I}$.
\end{thm}

In the above theorem $C_{I}$ denotes the Cartan matrix of the semisimple component $\mathfrak{s}_{P} = {\rm{Lie}}(S_{P})$ and 
\begin{equation}
{\rm{NE}}(X_{P})_{\mathbbm{Z}} = \Big \{ \sum_{i}a_{i}[C_{i}] \in {\rm{NE}}(X_{P}) \ \Big | \ a_{i} \in \mathbbm{Z} \Big \},
\end{equation}
where ${\rm{NE}}(X_{P})$ is the Mori cone of $X_{P}$. As we see, condition (B) is a numerical necessary and sufficient condition to ensure the existence of a splitting ${\bf{E}} \cong {\bf{E}}_{0} \otimes {\bf{L}}_{0}$, such that ${\bf{L}}_{0} \in {\rm{Pic}}(X_{P})$ and $c_{1}({\bf{E}}_{0}) = 0$. Further, we have from item (C) an algebraic checkable criterion for the existence of such a splitting. 

As an application of the above theorem in the setting of Problem \ref{P1}, we have the following result.

\begin{thm}
\label{Hcase1}
Given a homogeneous holomorphic vector bundle ${\bf{E}} \to X_{P}$, defined by some irreducible $P$-module, fixed some $G$-invariant (integral) K\"{a}hler metric $\omega_{0}$ on $X_{P}$, if $f \in L^{2}(X_{P},\mu_{\omega_{0}})$ satisfies 
\begin{equation}
\frac{1}{2\pi}\dashint_{X_{P}}f(x){\rm{d}}\mu_{\omega_{0}}(x) = \frac{\mu_{[\omega_{0}]}({\bf{E}})}{(n-1)!{\rm{Vol}}(X_{P},\omega_{0})},
\end{equation}
and
\begin{equation}
\displaystyle \frac{1}{{\rm{rank}}({\bf{E}})} \int_{C}c_{1}({\bf{E}}) \in \mathbbm{Z}, \ \ \ \ \ \forall [C] \in {\rm{NE}}(X_{P})_{\mathbbm{Z}},
\end{equation}
then there exists a sequence of smooth Hermitian structures $\{{\bf{h}}_{n}\}$ on ${\bf{E}}$, such that 
\begin{equation}
{\rm{K}}({\bf{E}},{\bf{h}}_{n}) \overset{L^2}{\longrightarrow}  f1_{{\bf{E}}} \ \ \ \ {\text{and}} \ \ \ \ {\bf{h}}_{n} \overset{a.e.}{\longrightarrow} {\bf{h}}_{\infty},
\end{equation}
as $n \uparrow +\infty$, where ${\bf{h}}_{\infty}$ is a singular Hermitian structure on ${\bf{E}}$. Moreover, we have
\begin{equation}
{\rm{K}}({\bf{E}},{\bf{h}}_{\infty}) = f1_{{\bf{E}}},
\end{equation}
in the sense of distribution. 
\end{thm}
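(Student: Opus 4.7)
The plan is to reduce the higher-rank statement to the scalar Poisson-equation analysis already carried out in the proof of Theorem~\ref{L2WHE}. The decisive observation is that, since ${\bf{E}}_{\varrho} \to X_P$ is defined by an \emph{irreducible} $P$-module $\varrho$, the bundle ${\bf{E}}_{\varrho}$ carries a canonical $G$-invariant Hermitian structure ${\bf{h}}_0$, obtained by translating around $X_P$ any $K$-invariant inner product on the fiber at the base point. By Schur's lemma applied fibrewise at the base point, the $\omega_0$-trace of the $K$-equivariant curvature endomorphism of ${\bf{h}}_0$ is a scalar multiple of the identity, so ${\bf{h}}_0$ is Hermite--Einstein with
\begin{equation*}
{\rm{K}}({\bf{E}}_{\varrho},{\bf{h}}_0) = c \cdot 1_{{\bf{E}}_{\varrho}}, \qquad c = \frac{2\pi \, \mu_{[\omega_0]}({\bf{E}}_{\varrho})}{(n-1)! \, {\rm{Vol}}(X_P,\omega_0)},
\end{equation*}
the value of $c$ being forced by the cohomological identity in Eq.~(\ref{AnalyticVStopologic}).

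For any real-valued $u \in C^{\infty}(X_P)$, the conformal modification ${\bf{h}} := e^{-2u}{\bf{h}}_0$ has Chern curvature $F({\bf{h}}) = F({\bf{h}}_0) + 2\partial \bar{\partial} u \cdot 1_{{\bf{E}}_{\varrho}}$, hence
\begin{equation*}
{\rm{K}}({\bf{E}}_{\varrho},{\bf{h}}) = \bigl(c + {\bf{\Delta}}_{\omega_0} u\bigr)\,1_{{\bf{E}}_{\varrho}}.
\end{equation*}
Solving ${\rm{K}}({\bf{E}}_{\varrho},{\bf{h}}) = f\,1_{{\bf{E}}_{\varrho}}$ is therefore equivalent to solving the scalar Poisson equation ${\bf{\Delta}}_{\omega_0} u = f - c$ on $(X_P,\omega_0)$, and the integral hypothesis in the statement is precisely the vanishing-mean condition ensuring $f - c$ is $L^2$-orthogonal to the constants. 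Expanding $f - c$ in the $L^2$-eigenbasis $\{\varphi_{i[\Pi]}\}$ of $-{\bf{\Delta}}_{\omega_0}$ supplied by Peter--Weyl, the formal solution
\begin{equation*}
u_{\infty} := \sum_{\lambda_{i[\Pi]} > 0} \frac{(f-c,\varphi_{i[\Pi]})_{L^2}}{\lambda_{i[\Pi]}} \, \varphi_{i[\Pi]}
\end{equation*}
is approximated by its spectral partial sums $u_n$; setting ${\bf{h}}_n := e^{-2u_n}{\bf{h}}_0$ yields a sequence of smooth Hermitian structures with ${\rm{K}}({\bf{E}}_{\varrho},{\bf{h}}_n) \to f\,1_{{\bf{E}}_{\varrho}}$ in $L^2$ by Parseval, and ${\bf{h}}_n \to {\bf{h}}_{\infty} := e^{-2u_{\infty}}{\bf{h}}_0$ almost everywhere by virtue of the pointwise absolute convergence assumed on the Fourier expansion of $f$.

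The principal obstacle---and the step that genuinely separates this from Theorem~\ref{L2WHE}---is making rigorous sense of ${\rm{K}}({\bf{E}}_{\varrho},{\bf{h}}_{\infty}) = f\,1_{{\bf{E}}_{\varrho}}$ as a distributional identity when ${\bf{h}}_{\infty}$ is only a singular Hermitian structure on a higher-rank bundle, since the definition of curvature for singular Hermitian metrics on $r > 1$ bundles is subtle in general (cf. \cite{raufi2015singular}). I would exploit the fact that the singular factor $e^{-2u_{\infty}}$ is \emph{scalar} while ${\bf{h}}_0$ remains smooth: in the framework of~\cite{damailly1992singular} this decouples the curvature current as $F({\bf{h}}_{\infty}) = F({\bf{h}}_0) + 2\partial\bar{\partial} u_{\infty} \cdot 1_{{\bf{E}}_{\varrho}}$, so taking $\omega_0$-trace yields ${\rm{K}}({\bf{E}}_{\varrho},{\bf{h}}_{\infty}) = (c + {\bf{\Delta}}_{\omega_0} u_{\infty})\,1_{{\bf{E}}_{\varrho}}$ in $\mathscr{D}'(X_P) \otimes {\rm{End}}({\bf{E}}_{\varrho})$. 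The final step is the weak limit ${\bf{\Delta}}_{\omega_0} u_n \to f - c$ in $\mathscr{D}'(X_P)$, which follows from $L^2$-convergence of the Fourier partial sums together with continuity of ${\bf{\Delta}}_{\omega_0}$ on distributions, reducing everything back to the scalar mechanism already validated in Theorem~\ref{L2WHE}.
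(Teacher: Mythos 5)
Your proposal is correct, and the analytic core is the same as the paper's: reduce to the scalar Poisson equation ${\bf{\Delta}}_{\omega_0}u = f - c$ by conformally rescaling a Hermite--Einstein reference metric, take spectral partial sums $u_n$, and use the fact that the singular factor is scalar (so the curvature current of ${\bf{h}}_\infty$ decouples as $F({\bf{h}}_0) + 2\partial\bar{\partial}u_\infty\otimes 1_{{\bf{E}}_\varrho}$) to make sense of the limit equation distributionally. The one genuine difference is how the reference metric is produced. The paper invokes Lemma \ref{irredparabolic} to write ${\bf{E}}_\varrho = {\bf{E}}_0\otimes{\bf{L}}$ with $c_1({\bf{E}}_0)=0$, takes the product Hermite--Einstein metric from Theorem \ref{Ramanan}, and then literally quotes the already-proven line-bundle case (Theorem \ref{L2WHE}) applied to ${\bf{L}}$, using $\mu_{[\omega_0]}({\bf{E}}_\varrho)=\deg_{\omega_0}({\bf{L}})$ and Eq.~(\ref{meanHR}) to transport the conclusion back to ${\bf{E}}_\varrho$. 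You instead bypass the tensor decomposition entirely and obtain an invariant Hermite--Einstein metric ${\bf{h}}_0$ directly by translating a $K$-invariant inner product and applying Schur's lemma to the $K$-equivariant endomorphism $\sqrt{-1}\Lambda_{\omega_0}F({\bf{h}}_0)$ at the base point; this is legitimate ($V$ is $K$-irreducible since $K$ is a compact real form of the Levi factor) and is in fact more self-contained than citing Theorem \ref{Ramanan}, at the cost of re-running the spectral argument rather than formally reducing to the line-bundle theorem. Your closing remark about the subtlety of curvature for singular metrics in rank $r>1$ is well taken, but note that the paper sidesteps it by the same device you do: it only ever defines the curvature current for metrics of the form (smooth)\,$\otimes$\,(singular line-bundle metric), which is exactly the shape of ${\bf{h}}_\infty = e^{-2u_\infty}{\bf{h}}_0$.
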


As we see from the above theorem, by leveraging Cartan's highest weight theory, our algebraic splitting condition provided by Theorem \ref{theoremA} completely bypasses the non-abelian analytic difficulties faced in the Bando-Siu framework, allowing the analytical singular obstruction to be perfectly modeled using Demailly's abelian theory of singular line bundle metrics.

To further underscore the analytical scope of Theorem \ref{Hcase1}, we now formalize a consequence demonstrating that the framework can seamlessly accommodate rough curvature profiles that blow up on analytic smooth subvarieties. This application highlights the transition from classical smooth setups to the distributional realm, breaking standard continuity and boundedness assumptions.

\begin{corollary-non}
\label{corollarysing}
Let ${\bf{E}} \rightarrow X_{P}$ be a homogeneous holomorphic vector bundle defined by some irreducible $P$-module over a rational homogeneous variety $X_{P}$. Suppose that ${\bf{E}}$ satisfies 
\begin{equation}
    \frac{1}{\rm{rank}({\bf{E}})}\int_{C}c_{1}({\bf{E}})\in\mathbb{Z}, \quad \forall[C]\in {\rm{NE}}(X_{P})_{\mathbbm{Z}}.
\end{equation}
Given any analytic smooth subvariety $ Y\subset X_P$ and a real exponent $0 < s < {\rm{codim}}(Y)$, then there exists a singular Hermitian structure ${\bf{h}}_{\infty}$ on ${\bf{E}}$ such that its mean curvature current satisfies
\begin{equation}
    {\rm{K}}({\bf{E}},{\bf{h}}_{\infty}) = \Bigg (\frac{1}{d(x,Y)^{s}} + C_{0} \Bigg) 1_{{\bf{E}}},
\end{equation}
in the sense of distributions, where $d(\cdot,Y)$ denotes the Riemannian distance function to $Y$, and $C_0 \in \mathbbm{R}$ is a topologically determined constant.
\end{corollary-non}

The significance of the above corollary lies in its illustration of the sheer analytical flexibility offered by Theorem \ref{Hcase1}. In classical geometric analysis, particularly within the framework established by Bando and Siu , prescribing a mean curvature candidate inherently requires the target function to remain globally bounded to avoid non-abelian analytical pathologies. Corollary \ref{corollarysing} breaks through this regular barrier by demonstrating that the prescribed mean curvature can accommodate prescribed singularities that remain structurally inaccessible to general pluripotential approaches.

\subsection*{Acknowledgments.}E. M. Correa is supported by S\~{a}o Paulo Research Foundation FAPESP grant 2025/18843-1.

\section{Generalities on flag varieties}\label{generalities}
In this section, we review some basic generalities about flag varieties. For more details on the subject presented in this section, we suggest \cite{Akhiezer}, \cite{Flagvarieties}, \cite{HumphreysLAG}, \cite{BorelRemmert}, \cite{knapp1996lie}.
\subsection{Basic structure of semisimple Lie algebras}
\label{subsec3.1}
Let $\mathfrak{g}^{\mathbbm{C}}$ be a complex semisimple Lie algebra. By fixing a Cartan subalgebra $\mathfrak{h}$ and a simple root system $\Delta \subset \mathfrak{h}^{\ast}$, we have a triangular decomposition of $\mathfrak{g}^{\mathbbm{C}}$ given by
\begin{center}
$\mathfrak{g}^{\mathbbm{C}} = \mathfrak{n}^{-} \oplus \mathfrak{h} \oplus \mathfrak{n}^{+}$, 
\end{center}
where $\mathfrak{n}^{-} = \sum_{\alpha \in \Phi^{-}}\mathfrak{g}_{\alpha}$ and $\mathfrak{n}^{+} = \sum_{\alpha \in \Phi^{+}}\mathfrak{g}_{\alpha}$, here we denote by $\Phi = \Phi^{+} \cup \Phi^{-}$ the root system associated with the simple root system $\Delta \subset \mathfrak{h}^{\ast}$. 

Let us denote by $\kappa$ the Cartan-Killing form of $\mathfrak{g}^{\mathbbm{C}}$. From this, for every  $\alpha \in \Phi^{+}$, $x \in \mathfrak{g}_{\alpha}$, and $y \in \mathfrak{g}_{-\alpha}$, we have $t_{\alpha} \in \mathfrak{h}$, such that $\alpha = \kappa(t_{\alpha},\cdot)$, and $[x,y] = \kappa(x,y)t_{\alpha}$.
Moreover, we can choose $x_{\alpha} \in \mathfrak{g}_{\alpha}$ and $y_{-\alpha} \in \mathfrak{g}_{-\alpha}$, such that 
\begin{equation}
[x_{\alpha},y_{-\alpha}] = h_{\alpha}, \ \ [h_{\alpha},x_{\alpha}] = 2x_{\alpha}, \ \ [h_{\alpha},y_{-\alpha}] = -2y_{-\alpha},
\end{equation}
where 
\begin{equation}
\label{generatorvee}
h_{\alpha} = \frac{2}{\kappa(t_{\alpha},t_{\alpha})}t_{\alpha},
\end{equation}
see for instance \cite{HumphreysLAG}. Hence, we have $\mathfrak{sl}_{2}(\alpha) := {\rm{Span}}_{\mathbbm{C}}\{x_{\alpha},y_{-\alpha},h_{\alpha}\} \cong {\mathfrak{sl}}_{2}(\mathbbm{C})$, for every $\alpha \in \Phi^{+}$. The set $\{x_{\alpha},y_{-\alpha},h_{\alpha}\}$ is called standard ${\mathfrak{sl}}_{2}(\mathbbm{C})$-triple associated with $\alpha \in \Phi^{+}$. 

Now we recall the following definitions.
\begin{definition}
A Lie algebra $\mathfrak{g}$ over $\mathbbm{R}$ is said to be compact if its Killing form $\kappa$ is negative definite.
\end{definition}
\begin{definition}
Let $\mathfrak{g}^{\mathbbm{C}}$ be a Lie algebra over $\mathbbm{C}$. A real subalgebra $\mathfrak{g} \subset \mathfrak{g}^{\mathbbm{C}}$ is said to be a real form of $\mathfrak{g}^{\mathbbm{C}}$ if $\mathfrak{g}^{\mathbbm{C}} = \mathfrak{g} \oplus \sqrt{-1}\mathfrak{g}$ and $\mathfrak{g} \cap \sqrt{-1}\mathfrak{g} = \{0\}$.
\end{definition}
Given a complex semisimple Lie algebra $\mathfrak{g}^{\mathbbm{C}}$, we can construct a compact real form of $\mathfrak{g}^{\mathbbm{C}}$ in the following manner: for each $\alpha \in \Phi$, it is possible to choose a root vector $e_{\alpha} \in \mathfrak{g}_{\alpha}$, such that
\begin{enumerate}
\item $\kappa(e_{\alpha},e_{-\alpha}) = 1$, $[e_{\alpha},e_{-\alpha}]= t_{\alpha}$, $\forall \alpha \in \Phi^{+}$,
\item $[e_{\alpha},e_{\beta}] = m_{\alpha,\beta}e_{\alpha+\beta}$, if $\alpha + \beta \in \Phi$,
\item $[e_{\alpha},e_{\beta}] = 0$,  if $\alpha + \beta \neq 0$ and $\alpha + \beta \notin \Phi$,
\end{enumerate}
such that $m_{\alpha,\beta} \in \mathbbm{R}$ and $m_{-\alpha,-\beta} = - m_{\alpha,\beta}$, $\forall \alpha,\beta \in \Phi$, see for instance \cite{knapp1996lie}. From above, it follows that the set $\{e_{\alpha},e_{-\alpha},t_{\beta} \ | \ \alpha \in \Phi^{+}, \ \beta \in \Delta\}$ defines a basis for $\mathfrak{g}^{\mathbbm{C}}$ (a.k.a. Weyl basis). By means of the above data we define:
\begin{enumerate}
\item[(i)] $\mathfrak{h}_{\mathbbm{R}}:= {\rm{Span}}_{\mathbbm{R}}\big \{ t_{\alpha} \ \big | \ \alpha \in \Delta\big\}$,
\item[(ii)] $A_{\alpha} := e_{\alpha} - e_{-\alpha}$, \ \ $B_{\alpha} = \sqrt{-1}(e_{\alpha} + e_{-\alpha})$, $\alpha \in \Phi^{+}$.
\end{enumerate}
From above, we set
\begin{equation}
\label{cptreal}
\mathfrak{g}:= \sqrt{-1}\mathfrak{h}_{\mathbbm{R}} \oplus \bigg ( \sum_{\alpha \in \Phi^{+}}\underbrace{{\rm{Span}}_{\mathbbm{R}}\{A_{\alpha},B_{\alpha}\}}_{\mathfrak{u}_{\alpha}}\bigg). 
\end{equation}
By construction, it can be shown that the structure constants of $\mathfrak{g}$ defined above lie in $\mathbbm{R}$. Thus, $\mathfrak{g} \subset \mathfrak{g}^{\mathbbm{C}}$ is a real Lie subalgebra. Further, we have $\mathfrak{g}^{\mathbbm{C}} = \mathfrak{g} \oplus \sqrt{-1}\mathfrak{g}$ and $\mathfrak{g} \cap \sqrt{-1}\mathfrak{g} = \{0\}$.

\subsection{Semisimple Lie groups and flag varieties} 
\label{simplelieandflags}
From now on, we fix a complex semisimple Lie algebra $\mathfrak{g}^{\mathbbm{C}}$, and a triangular decomposition
\begin{equation}
\label{triangular}
\mathfrak{g}^{\mathbbm{C}} = \mathfrak{n}^{-} \oplus \mathfrak{h} \oplus \mathfrak{n}^{+}, 
\end{equation}
induced by a triple $(\mathfrak{g}^{\mathbbm{C}},\mathfrak{h},\Delta)$, where $\mathfrak{h}$ is a Cartan subalgebra, and $\Delta$ is a simple root system. Further, we will refer to a compact real form $\mathfrak{g}$ of $\mathfrak{g}^{\mathbbm{C}}$ as being a compact real form as described in Eq. (\ref{cptreal}). Let $G^{\mathbbm{C}}$ be a connected, simply connected, and complex Lie group, such that ${\rm{Lie}}(G^{\mathbbm{C}}) = \mathfrak{g}^{\mathbbm{C}}$. In this setting, we consider the following definitions.

\begin{definition}
A Lie subalgebra $\mathfrak{b} \subset \mathfrak{g}^{\mathbbm{C}}$ is called a Borel subalgebra if $\mathfrak{b}$ is a maximal solvable subalgebra of $\mathfrak{g}^{\mathbbm{C}}$. 
\end{definition}
\begin{definition}
A Lie subgroup $B \subset G^{\mathbbm{C}}$ is called a Borel subgroup if ${\rm{Lie}}(B) \subset \mathfrak{g}^{\mathbbm{C}}$ is a Borel subalgebra.
\end{definition}
Associated with the decomposition given in Eq. (\ref{triangular}) we can define a Borel subalgebra by setting $\mathfrak{b}(\Delta) := \mathfrak{h} \oplus \mathfrak{n}^{+}$. In this last case, $\mathfrak{b}(\Delta)$ is called standard Borel subalgebra relative to $\mathfrak{h}$. Now we have the following result (see for instance \cite{Flagvarieties}, \cite{HumphreysLAG}, \cite{Humphreys}):
\begin{theorem}
\label{Borelconjugate}
Any two Borel (subgroups) subalgebras are conjugate.
\end{theorem}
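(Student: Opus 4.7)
The plan is to prove the group-level statement first and then transfer it to Lie algebras via the bijection between connected Lie subgroups of $G^{\mathbbm{C}}$ and Lie subalgebras of $\mathfrak{g}^{\mathbbm{C}}$. The main geometric device will be the fact that for the standard Borel subgroup $B(\Delta) \subset G^{\mathbbm{C}}$ corresponding to $\mathfrak{b}(\Delta) = \mathfrak{h} \oplus \mathfrak{n}^{+}$, the quotient $\mathcal{B} := G^{\mathbbm{C}}/B(\Delta)$ is a (smooth) projective variety, the so-called full flag variety, together with Borel's fixed point theorem: any connected solvable algebraic group acting regularly on a non-empty projective variety has a fixed point.

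First I would record the projectivity of $\mathcal{B}$. This follows by embedding $\mathcal{B}$ into $\mathbbm{P}(V)$ for a suitable irreducible $G^{\mathbbm{C}}$-representation $V$ with a regular dominant highest weight $\lambda$: the stabilizer of the highest weight line is exactly $B(\Delta)$ (Borel--Weil), so the orbit map $G^{\mathbbm{C}} \to \mathbbm{P}(V)$, $g \mapsto g \cdot [v_{\lambda}^{+}]$, descends to a closed $G^{\mathbbm{C}}$-equivariant immersion $\mathcal{B} \hookrightarrow \mathbbm{P}(V)$. Alternatively, one can appeal directly to the compact picture $\mathcal{B} \cong G/T$, where $T = G \cap B(\Delta)$ is a maximal torus of the compact real form $G$, and invoke compactness together with Chow's theorem.

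Next, let $B' \subset G^{\mathbbm{C}}$ be an arbitrary Borel subgroup. I would argue that $B'$ is connected and solvable (by definition and standard structure theory of maximal solvable subgroups in semisimple groups). Consequently $B'$ acts algebraically on the projective variety $\mathcal{B}$, and by Borel's fixed point theorem it fixes at least one point $g B(\Delta) \in \mathcal{B}$. The fixed-point condition $B' \cdot gB(\Delta) = gB(\Delta)$ translates to $g^{-1}B'g \subset B(\Delta)$, i.e.\ $B' \subset gB(\Delta)g^{-1}$. Since $gB(\Delta)g^{-1}$ is again a Borel subgroup and $B'$ was chosen to be \emph{maximal} solvable, the inclusion is forced to be an equality, so $B' = gB(\Delta)g^{-1}$. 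This shows that every Borel subgroup is $G^{\mathbbm{C}}$-conjugate to $B(\Delta)$, and hence any two Borel subgroups are conjugate.

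Finally, to deduce the Lie algebra version, I would use that Borel subalgebras are precisely the Lie algebras of Borel subgroups, together with the equivariance ${\rm{Lie}}(g B(\Delta)g^{-1}) = {\rm{Ad}}(g)\mathfrak{b}(\Delta)$. Thus any Borel subalgebra $\mathfrak{b}' \subset \mathfrak{g}^{\mathbbm{C}}$ is of the form ${\rm{Ad}}(g)\mathfrak{b}(\Delta)$ for some $g \in G^{\mathbbm{C}}$, which gives the conjugacy statement at the Lie algebra level. I expect the main obstacle to be a clean verification of the projectivity of $\mathcal{B}$ in the algebraic category (so that Borel's fixed point theorem applies in its algebraic-geometric form); the conjugacy step itself is then essentially formal, and the only other point requiring care is the well-known but non-trivial fact that Borel subgroups of a connected semisimple complex group are connected, which one needs to legitimately invoke Borel's fixed point theorem.
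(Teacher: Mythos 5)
The paper does not actually prove this statement: it is recorded as a classical fact and the reader is referred to the standard references on flag varieties and linear algebraic groups. Your argument is precisely the canonical Borel fixed-point proof found in those references, and it is correct as written: one embeds $\mathcal{B} = G^{\mathbbm{C}}/B(\Delta)$ into $\mathbbm{P}(V(\lambda))$ for a regular dominant weight $\lambda$ to get projectivity, applies Borel's fixed point theorem to the action of an arbitrary Borel subgroup $B'$ on $\mathcal{B}$, reads off $B' \subseteq gB(\Delta)g^{-1}$ at a fixed point, and upgrades to equality by maximality. The only point worth flagging is an ordering issue created by the paper's conventions: here a Borel \emph{subgroup} is defined as a Lie subgroup whose Lie algebra is a Borel \emph{subalgebra}, so the subalgebra statement is logically primary, and an abstract Borel subgroup in this sense is not a priori connected or solvable as a group. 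You correctly identify this as the delicate step; it can be sidestepped entirely by running the fixed-point argument on the connected subgroup $B' = \langle \exp(\mathfrak{b}') \rangle$ attached to a maximal solvable subalgebra $\mathfrak{b}'$ (its Zariski closure is still connected and solvable, and has the same Lie algebra by maximality), obtaining ${\rm{Ad}}(g)\mathfrak{b}' \subseteq \mathfrak{b}(\Delta)$ at a fixed point and concluding equality from maximality of $\mathfrak{b}'$; the conjugacy of Borel subgroups then follows rather than precedes the subalgebra statement.
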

From the result above, given a Borel subgroup $B \subset G^{\mathbbm{C}}$, up to conjugation, we can always suppose that $B = \exp(\mathfrak{b})$, with $\mathfrak{b} = \mathfrak{b}(\Delta)$. Consider now the following definition.
\begin{definition}
A Lie subalgebra $\mathfrak{p} \subset \mathfrak{g}^{\mathbbm{C}}$ is called parabolic if $\mathfrak{p}$ contains some Borel subalgebra. A Lie subgroup $P \subset G^{\mathbbm{C}}$ is called parabolic if $P$ contains some Borel subgroup.
\end{definition}
Given $I \subset \Delta$, we can construct a parabolic Lie subalgebra by setting 
\begin{equation}
\mathfrak{p}_{I} := \mathfrak{b}(\Delta)\oplus \Big ( \sum_{\alpha \in \langle I \rangle^{-}} \mathfrak{g}_{\alpha}\Big ).
\end{equation}
A Lie subalgebra as above is called standard parabolic Lie subalgebra relative to $\Delta$. Let $P_{I} \subset G^{\mathbbm{C}}$ be the Lie subgroup, such that ${\rm{Lie}}(P_{I}) = \mathfrak{p}_{I}$. By construction, $P_{I}$ is a parabolic Lie subgroup. We can show that $P_{I} = N_{G^{\mathbbm{C}}}(\mathfrak{p}_{I})$, where $N_{G^{\mathbbm{C}}}(\mathfrak{p}_{I})$ is the normalizer in  $G^{\mathbbm{C}}$ of $\mathfrak{p}_{I} \subset \mathfrak{g}^{\mathbbm{C}}$, see for instance \cite[\S 3.1]{Akhiezer}. From Theorem \ref{Borelconjugate}, we have the following result.
\begin{theorem}
Every parabolic Lie subalgebra is conjugated to one and only one standard parabolic Lie subalgebra $\mathfrak{p}_{I}$ relative to $\Delta$, for some $I \subset \Delta$.
\end{theorem}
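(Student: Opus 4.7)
The plan is to prove existence and uniqueness separately, with both parts resting on Theorem~\ref{Borelconjugate} together with the fact that a parabolic subgroup equals its own normalizer in $G^{\mathbbm{C}}$.

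For existence, start with an arbitrary parabolic subalgebra $\mathfrak{p} \subset \mathfrak{g}^{\mathbbm{C}}$. By definition it contains some Borel $\mathfrak{b}' \subset \mathfrak{p}$, and Theorem~\ref{Borelconjugate} gives $g \in G^{\mathbbm{C}}$ with $\mathrm{Ad}(g)\mathfrak{b}(\Delta) = \mathfrak{b}'$. Replacing $\mathfrak{p}$ by $\mathrm{Ad}(g^{-1})\mathfrak{p}$, I may assume $\mathfrak{p} \supset \mathfrak{b}(\Delta) = \mathfrak{h} \oplus \mathfrak{n}^{+}$. Since $\mathfrak{p}$ is $\mathfrak{h}$-stable under the adjoint action, it decomposes into root spaces, so
\begin{equation*}
\mathfrak{p} \;=\; \mathfrak{h} \oplus \bigoplus_{\alpha \in \Psi}\mathfrak{g}_{\alpha}, \qquad \Phi^{+} \subset \Psi \subset \Phi.
\end{equation*}
Set $I := \{\alpha \in \Delta \mid -\alpha \in \Psi\}$. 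The key step is to show that $\Psi \cap \Phi^{-} = \langle I \rangle^{-}$. The inclusion $\langle I \rangle^{-} \subset \Psi$ follows because $\mathfrak{p}$ is closed under brackets and $\Phi^{+} \subset \Psi$: any $\beta = -\sum_{\alpha \in I} n_{\alpha}\alpha \in \langle I \rangle^{-}$ can be assembled from iterated brackets of the $\mathfrak{g}_{-\alpha}$ ($\alpha \in I$) with $\mathfrak{n}^{+}$-elements, and by a standard $\mathfrak{sl}_{2}$-string argument these brackets do not vanish when the target is a root. The reverse inclusion is the heart of the proof: if $\beta \in \Psi \cap \Phi^{-}$ involves a simple root $\alpha \notin I$ in its decomposition $\beta = -\sum_{\gamma \in \Delta} m_{\gamma}\gamma$ (with $m_{\alpha} > 0$), I bracket $\mathfrak{g}_{\beta}$ successively with elements of $\mathfrak{g}_{\delta}$ for $\delta \in \Delta \setminus \{\alpha\}$ (all of which sit in $\mathfrak{n}^{+} \subset \mathfrak{p}$) until I land on the root $-\alpha$ itself; this forces $-\alpha \in \Psi$, contradicting $\alpha \notin I$. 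Thus $\Psi = \Phi^{+} \cup \langle I \rangle^{-}$ and $\mathfrak{p} = \mathfrak{p}_{I}$.

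For uniqueness, suppose $\mathrm{Ad}(g)\mathfrak{p}_{I} = \mathfrak{p}_{J}$ for some $g \in G^{\mathbbm{C}}$ and $I, J \subset \Delta$. Then $\mathfrak{b}(\Delta)$ and $\mathrm{Ad}(g)\mathfrak{b}(\Delta)$ are both Borel subalgebras contained in $\mathfrak{p}_{J}$. Applying Theorem~\ref{Borelconjugate} inside $P_{J}$ (i.e., the fact that all Borels of a parabolic are $P_{J}$-conjugate) yields $p \in P_{J}$ with $\mathrm{Ad}(pg)\mathfrak{b}(\Delta) = \mathfrak{b}(\Delta)$, so $pg \in N_{G^{\mathbbm{C}}}(\mathfrak{b}(\Delta)) = B \subset P_{J}$. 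Hence $g \in P_{J}$, and using that $\mathfrak{p}_{J}$ is self-normalizing I conclude $\mathfrak{p}_{I} = \mathrm{Ad}(g^{-1})\mathfrak{p}_{J} = \mathfrak{p}_{J}$. Finally, $I$ is recovered from $\mathfrak{p}_{I}$ as $\{\alpha \in \Delta \mid \mathfrak{g}_{-\alpha} \subset \mathfrak{p}_{I}\}$, so $I = J$.

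The main obstacle is the ``reverse inclusion'' step $\Psi \cap \Phi^{-} \subset \langle I \rangle^{-}$, since it requires a careful root-string argument to extract a simple negative root from an arbitrary negative root in $\Psi$; everything else reduces to the Borel conjugacy theorem and the self-normalizing property of parabolics already discussed above.
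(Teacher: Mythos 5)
The paper itself gives no proof of this theorem: it is stated as a standard consequence of Theorem \ref{Borelconjugate} and quoted from the references (\cite{Akhiezer}, \cite{HumphreysLAG}, \cite{Humphreys}), so your proposal has to be judged on its own merits. Your overall architecture is exactly the standard one: conjugate $\mathfrak{p}$ so that it contains $\mathfrak{b}(\Delta)$, use $\mathfrak{h}$-stability to write $\mathfrak{p}=\mathfrak{h}\oplus\bigoplus_{\alpha\in\Psi}\mathfrak{g}_{\alpha}$ with $\Psi$ a closed set of roots containing $\Phi^{+}$, identify $\Psi=\Phi^{+}\cup\langle I\rangle^{-}$, and deduce uniqueness from conjugacy of Borel subalgebras inside $P_{J}$ together with $N_{G^{\mathbbm{C}}}(\mathfrak{b}(\Delta))=B$ and the self-normalizing property $P_{J}=N_{G^{\mathbbm{C}}}(\mathfrak{p}_{J})$. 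The uniqueness half and the inclusion $\langle I\rangle^{-}\subset\Psi$ are fine (for the latter you do not even need the $\mathfrak{n}^{+}$-elements you invoke: writing $-\beta=\alpha_{i_{1}}+\cdots+\alpha_{i_{k}}$ with every partial sum a root forces each $\alpha_{i_{j}}$ into $\mathrm{supp}(-\beta)\subset I$, and iterated brackets of the $\mathfrak{g}_{-\alpha_{i_{j}}}$ alone produce $\mathfrak{g}_{\beta}$).

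The step you yourself single out as the heart of the argument is, however, broken as described. To prove $\Psi\cap\Phi^{-}\subset\langle I\rangle^{-}$ you take $\beta\in\Psi\cap\Phi^{-}$ with $-\beta=\sum_{\gamma}m_{\gamma}\gamma$, $m_{\alpha}>0$, and propose to bracket $\mathfrak{g}_{\beta}$ repeatedly with root spaces $\mathfrak{g}_{\delta}$ for $\delta\in\Delta\setminus\{\alpha\}$ until you land on $-\alpha$. Adding such $\delta$'s never changes the coefficient of $\alpha$, so this path can terminate at $-\alpha$ only when $m_{\alpha}=1$; it fails, for instance, for $-\beta=3\alpha_{1}+2\alpha_{2}$ in $G_{2}$, or for $-\beta=\alpha_{1}+2\alpha_{2}+\alpha_{3}$ in $C_{3}$. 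The standard repair is an induction on $\mathrm{ht}(-\beta)$ that avoids choosing a path at all: if $\mathrm{ht}(-\beta)\geq 2$, pick a simple root $\delta$ with $\gamma:=-\beta-\delta\in\Phi^{+}$ (always possible since $\langle -\beta,-\beta\rangle>0$). Closedness of $\Psi$ applied to the pair $(\beta,\gamma)$ gives $\beta+\gamma=-\delta\in\Psi$, so $\delta\in I$, and applied to $(\beta,\delta)$ gives $-\gamma\in\Psi$, so by induction $\gamma\in\langle I\rangle^{+}$; hence $-\beta=\gamma+\delta$ is supported on $I$ and $\beta\in\langle I\rangle^{-}$. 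With that substitution your argument is complete and agrees with the proof the paper's references have in mind.
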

From the result above, given a parabolic subgroup $P \subset G^{\mathbbm{C}}$, up to conjugation, we can always suppose that $P = P_{I}$, for some $I \subset \Delta$. It is worth observing that 
\begin{equation}
\label{Leviparabolic}
\mathfrak{p}_{I} = \underbrace{\Big ( \mathfrak{h} \oplus \sum_{\alpha \in \langle I \rangle }\mathfrak{g}_{\alpha}\Big)}_{\mathfrak{l}_{P}} \oplus \underbrace{\Big ( \sum_{\alpha \in \Phi^{+} \backslash \langle I \rangle^{+} }\mathfrak{g}_{\alpha}\Big )}_{\mathfrak{u}_{P}^{+}}.
\end{equation}
In the above decomposition (direct sum of vector spaces), $\mathfrak{l}_{P}$ is called the Levi factor and $\mathfrak{u}_{P}^{+}$ is called the nilpotent radical (maximal nilpotent ideal) of $\mathfrak{p}_{I}$. Notice that $\mathfrak{l}_{P}$ is a reductive Lie subalgebra, thus 
\begin{equation}
\mathfrak{l}_{P} =  [\mathfrak{l}_{P},\mathfrak{l}_{P}]\oplus \mathfrak{c}_{P}, 
\end{equation}
where $[\mathfrak{l}_{P},\mathfrak{l}_{P}]$ is semisimple and $\mathfrak{c}_{P}$ is the center of $\mathfrak{l}_{P}$. Moreover, we have 
\begin{equation}
\mathfrak{c}_{P} = \bigcap_{\alpha \in I}\ker(\alpha).
\end{equation}
In the context of Lie groups, we have the Levi decomposition
\begin{equation}
\label{Levidecparabolic}
P_{I} = L_{P}R_{u}(P_{I})^{+} = L_{P} \ltimes R_{u}(P_{I})^{+},
\end{equation}
such that 
\begin{equation}
L_{P} = [L_{P},L_{P}]Z(L_{P})^{0}, 
\end{equation}
where $Z(L_{P})^{0}:= \exp(\mathfrak{c}_{P})$. Here $Z(L_{P}) \subset L_{P}$ denotes the center of $L_{P}$. Further, since in this case $ R_{u}(P_{I})^{+} \subset [P_{I},P_{I}]$, we obtain
\begin{equation}
\label{commutatordec}
P_{I} = [P_{I},P_{I}]Z(L_{P})^{0}.
\end{equation}
In the above setting, we have the following definition.
\begin{definition}
A homogeneous variety $X_{P} = G^{\mathbbm{C}}/P$, where $P \subset G^{\mathbbm{C}}$ is a parabolic subgroup, is called a flag variety (of $G^{\mathbbm{C}}$).
\end{definition}

\begin{remark}
Given $I \subset \Delta$, we shall denote $\Phi_{I}^{\pm}:= \Phi^{\pm} \backslash \langle I \rangle^{\pm}$, such that $\langle I \rangle^{\pm} = \langle I \rangle \cap \Phi^{\pm}$. In particular, we have $\Phi_{I} := \Phi_{I}^{+} \cup \Phi_{I}^{-}$.
\end{remark}

Let $G \subset G^{\mathbbm{C}}$ be the compact connected and simply connected Lie subgroup, such that ${\rm{Lie}}(G) = \mathfrak{g}$ (compact real form). Given a parabolic Lie subgroup $P \subset G^{\mathbbm{C}}$, such that $P = P_{I}$, for some $I \subset \Delta$, considering $\mathfrak{p}_{I} = {\rm{Lie}}(P)$, it follows from Eq. (\ref{cptreal}) and Eq. (\ref{Leviparabolic}) that $\mathfrak{k}:= \mathfrak{p}_{I} \cap \mathfrak{g}$ can be written  in the following way
\begin{equation}
\mathfrak{k} = \sqrt{-1}\mathfrak{h}_{\mathbbm{R}} \oplus \Big ( \sum_{\alpha \in \langle I \rangle^{+}} {\rm{Span}}_{\mathbbm{R}} \{A_{\alpha},B_{\alpha} \}\Big) =  \sqrt{-1}\mathfrak{h}_{\mathbbm{R}} \oplus \Big ( \sum_{\alpha \in \langle I \rangle^{+}} \mathfrak{u}_{\alpha}\Big).
\end{equation}
Considering the closed connected Lie subgroup $K \subset G$, such that ${\rm{Lie}}(K) = \mathfrak{k}$, we obtain the following characterization 
\begin{equation}
X_{P} = G^{\mathbbm{C}}/P = G / P \cap G = G/K.
\end{equation}
From above, we notice that 
\begin{equation}
\label{reductivespacecondition}
\mathfrak{g} = \mathfrak{k} \oplus \mathfrak{m}_{0}, \ \ {\rm{Ad}}(k)\mathfrak{m}_{0} = \mathfrak{m}_{0}, \ \forall k \in K,
\end{equation}
such that $\mathfrak{m}_{0} = \sum_{\alpha \in \Phi_{I}^{+}} \mathfrak{u}_{\alpha}$. We should also observe that 
\begin{equation}
\mathfrak{m} := \mathfrak{m}_{0} \otimes \mathbbm{C} =  \sum_{\alpha \in \Phi_{I}} \mathfrak{g}_{\alpha}. 
\end{equation}
Denoting ${\rm{o}} = eK \in X_{P}$, it follows that 
\begin{equation}
\label{tangentisomorphism}
T_{\rm{o}}X_{P} \cong \mathfrak{g}/\mathfrak{k} \cong \mathfrak{m}_{0} = \sum_{\alpha \in \Phi_{I}^{+}} \mathfrak{u}_{\alpha}.
\end{equation}
Since $X_{P}$ is a homogeneous space of a complex Lie group, it has a natural structure of a complex manifold.

The associated $G$-invariant integrable almost complex structure $J$ commutes with the adjoint action of $K$ on $\mathfrak{m}_{0}$, and is completely determined at ${\rm{o}} \in X_{P}$ by $J \colon \mathfrak{m}\to \mathfrak{m}$, such that 
\begin{equation}
\label{amcplx}
J(e_{-\alpha}) = \sqrt{-1}e_{-\alpha}, \ \ \ J(e_{\alpha}) = -\sqrt{-1}e_{\alpha},
\end{equation}
for all $\alpha\in \Phi^{+}$, see for instance \cite{san2003invariant}. In particular, notice that 
\begin{equation}
\label{isoholomorphictgbundle}
T_{\rm{o}}^{1,0}X_{P} = \sum_{\alpha \in \Phi^{+}_{I}}\mathfrak{g}_{-\alpha}, \ \ \ {\text{and}} \ \ \ T_{\rm{o}}^{0,1}X_{P} = \sum_{\alpha \in \Phi^{+}_{I}}\mathfrak{g}_{\alpha}.
\end{equation}
For more details on almost complex structures on flag varieties, we suggest \cite{san2003invariant}.

\subsection{Highest weight modules} Let us recall some basic facts about the representation theory of complex semisimple Lie algebras, a detailed exposition on the subject to be presented below can be found in \cite{Humphreys}. By keeping the previous notation, for every $\alpha \in \Phi$, we set 
$$\alpha^{\vee} := \frac{2}{\langle \alpha, \alpha \rangle}\alpha.$$ 

\begin{remark}
In what follows, $\forall \phi,\psi \in \mathfrak{h}^{\ast}$, we denote 
\begin{equation}
\label{paringkilling}
\langle \phi, \psi \rangle = \kappa (t_{\phi},t_{\psi}), 
\end{equation}
where $t_{\phi},t_{\psi} \in  \mathfrak{h}$ are, respectively, the dual of $\phi$ and $\psi$ with respect to the Killing form $\kappa$.
\end{remark}
In this setting, the fundamental weights $\{\varpi_{\alpha} \ | \ \alpha \in \Delta\} \subset \mathfrak{h}^{\ast}$ of $(\mathfrak{g}^{\mathbbm{C}},\mathfrak{h})$ are defined by requiring that $\langle \varpi_{\alpha}, \beta^{\vee} \rangle= \delta_{\alpha \beta}$, $\forall \alpha, \beta \in \Delta$. We denote by 
\begin{equation}
\Lambda = \bigoplus_{\alpha \in \Delta}\mathbbm{Z}\varpi_{\alpha}, \ \ \ \Lambda^{+} = \bigoplus_{\alpha \in \Delta}\mathbbm{Z}_{\geq 0}\varpi_{\alpha}, \ \ \ \Lambda^{++} = \bigoplus_{\alpha \in \Delta}\mathbbm{Z}_{> 0}\varpi_{\alpha},
\end{equation}
respectively, the set of weights, the set of integral dominant weights, and the set of strongly dominant weights of $\mathfrak{g}^{\mathbbm{C}}$. From above, given $\alpha \in \Delta$, it follows that 
\begin{equation}
\label{Cartanchange}
\alpha = \sum_{\beta \in \Delta} C_{\alpha \beta}\varpi_{\beta}, \ \ {\text{s.t.}} \ \ C_{\alpha \beta} = \langle \alpha,\beta^{\vee} \rangle, \ \forall \alpha, \beta \in \Delta.
\end{equation}
In other words, the Cartan matrix $C = (C_{\alpha \beta})$ of $\mathfrak{g}^{\mathbbm{C}}$ express the change of basis defined by simple roots and fundamental weights. Let $\Pi \colon \mathfrak{g}^{\mathbbm{C}} \to \mathfrak{gl}(V)$ be an arbitrary finite-dimensional $\mathfrak{g}^{\mathbbm{C}}$-module. By considering its weight space decomposition
\begin{center}
$\displaystyle{V = \bigoplus_{\mu \in \Phi(V)}V_{\mu}},$ \ \ \ \ 
\end{center}
such that $V_{\mu} = \{v \in V \ | \ \Pi(h)v = \mu(h)v, \ \forall h \in \mathfrak{h}\} \neq \{0\}$, $\forall \mu \in \Phi(V) \subset \mathfrak{h}^{\ast}$, we have the following definition.

\begin{definition}
\label{hightweightdef}
A highest weight vector (of weight $\lambda$) in a $\mathfrak{g}^{\mathbbm{C}}$-module $V$ is a non-zero vector $v_{\lambda}^{+} \in V_{\lambda}$, such that 
\begin{center}
$\Pi(x)v_{\lambda}^{+} = 0$, \ \ \ \ \ ($\forall x \in \mathfrak{n}^{+}$).
\end{center}
A weight $\lambda \in \Phi(V)$ associated with a highest weight vector is called highest weight of $V$.
\end{definition}

From above, we consider the following Cartan's highest weight theory results (e.g. \cite{Humphreys}):
\begin{enumerate}

\item[(A)] Every finite-dimensional irreducible $\mathfrak{g}^{\mathbbm{C}}$-module $V$ admits a highest weight vector $v_{\lambda}^{+}$. Moreover, $v_{\lambda}^{+}$ is the unique highest weight vector of $V$, up to non-zero scalar multiples.

\item[(B)] Let $V$ and $W$ be finite-dimensional irreducible $\mathfrak{g}^{\mathbbm{C}}$-modules with highest weight $\lambda \in \mathfrak{h}^{\ast}$. Then, $V$ and $W$ are isomorphic. We will denote by $\Pi_{\lambda} \colon \mathfrak{g} \to \mathfrak{gl}(V(\lambda))$ an irreducible finite-dimensional  $\mathfrak{g}^{\mathbbm{C}}$-module with highest weight $\lambda \in \mathfrak{h}^{\ast}$.

\item[(C)] In the above setting, the following hold:
 
\begin{itemize}
\item[(C1)] If $V$ is an irreducible finite-dimensional  $\mathfrak{g}^{\mathbbm{C}}$-module with highest weight $\lambda \in \mathfrak{h}^{\ast}$, then $\lambda \in \Lambda^{+}$.

\item[(C2)] If $\lambda \in \Lambda^{+}$, then there exists an irreducible finite-dimensional  $\mathfrak{g}^{\mathbbm{C}}$-module $V$, such that $V = V(\lambda)$. 
\end{itemize}

\end{enumerate}
From item (C), it follows that the map $\lambda \mapsto V(\lambda)$ induces an one-to-one correspondence between $\Lambda^{+}$ and the set of isomorphism classes of finite-dimensional irreducible $\mathfrak{g}^{\mathbbm{C}}$-modules.

\begin{remark} In what follows, it will be useful also to consider the following facts:
\begin{enumerate}
\item[(i)] For all $\lambda \in \Lambda^{+}$, we have $V(\lambda) = \Pi_{\lambda}(\mathfrak{U}(\mathfrak{g}^{\mathbbm{C}})) v_{\lambda}^{+}$, where $\mathfrak{U}(\mathfrak{g}^{\mathbbm{C}})$ is the universal enveloping algebra of $\mathfrak{g}^{\mathbbm{C}}$;
\item[(ii)] The fundamental representations are defined by $V(\varpi_{\alpha})$, $\alpha \in \Delta$; 
\item[(iii)] For all $\lambda \in \Lambda^{+}$, we have the following equivalence of induced irreducible representations
\begin{center}
$\Pi_{\lambda} \colon G^{\mathbbm{C}} \to {\rm{GL}}(V(\lambda))$ \ $\Longleftrightarrow$ \ $({\rm{d}}\Pi_{\lambda})_{e} \colon \mathfrak{g}^{\mathbbm{C}} \to \mathfrak{gl}(V(\lambda))$,
\end{center}
such that $\Pi_{\lambda}(\exp(x)) = \exp(({\rm{d}}\Pi_{\lambda})_{e}(x))$, $\forall x \in \mathfrak{g}^{\mathbbm{C}}$, notice that $G^{\mathbbm{C}} = \langle \exp(\mathfrak{g}^{\mathbbm{C}}) \rangle$. For the sake of simplicity, we denote $({\rm{d}}\Pi_{\lambda})_{e}$ also by $\Pi_{\lambda}$.
\end{enumerate}
\end{remark}

\subsection{Representation Theory of Parabolic Lie Groups} For the details on the subject presented in this subsection, we suggest \cite{jantzen2003representations} and \cite{milne2017algebraic}.

Given a parabolic Lie subgroup $P = P_{I}\subset G^{\mathbbm{C}}$, let $(L_{P},T^{\mathbbm{C}})$ be the split reductive group defined by its Levi component $L_{P} \subset P$. Considering the root datum of $(L_{P},T^{\mathbbm{C}})$ defined as 
\begin{center}
$(\mathbbm{X}(T^{\mathbbm{C}}),\langle I \rangle, \alpha \mapsto \alpha^{\vee})$,
\end{center}
such that 
\begin{equation}
\label{charactertrosuabelian}
\mathbbm{X}(T^{\mathbbm{C}}) = {\rm{Hom}}(T^{\mathbbm{C}},\mathbbm{C}^{\times}) \cong \Lambda = \bigoplus_{\alpha \in \Delta} \mathbbm{Z}\varpi_{\alpha},
\end{equation}
is the character group of $T^{\mathbbm{C}}$, we can show that
\begin{equation}
\label{characterparabolic}
{\rm{Hom}}(P,\mathbbm{C}^{\times})  \cong {\rm{Hom}}(L_{P},\mathbbm{C}^{\times})  \cong \Big \{ \chi \in \mathbbm{X}(T^{\mathbbm{C}}) \ \Big | \ \langle \chi, \alpha^{\vee}\rangle = 0, \ \ \forall \alpha \in I\Big \}.
\end{equation}
The isomorphism above is obtained from the restriction homomorphism
\begin{equation}
{\rm{res}} \colon {\rm{Hom}}(P,\mathbbm{C}^{\times}) \to \mathbbm{X}(T^{\mathbbm{C}}), \ \ \chi \mapsto \chi|_{T^{\mathbbm{C}}}, 
\end{equation}
that is, ${\rm{Hom}}(P,\mathbbm{C}^{\times}) \cong {\rm{Im}}({\rm{res}})$. In the above setting, considering the subgroups 
\begin{equation}
\begin{split}
\mathbbm{X}_{0}(T^{\mathbbm{C}}) & := \Big \{ \chi \in \mathbbm{X}(T^{\mathbbm{C}}) \ \Big | \ \langle \chi, \alpha^{\vee}\rangle = 0, \ \ \forall \alpha \in I\Big \}, \\ \mathbbm{X}_{0}'(T^{\mathbbm{C}}) &:= \bigoplus_{\alpha \in \langle I \rangle}\mathbbm{Q}\alpha \cap \mathbbm{X}(T^{\mathbbm{C}}) = \langle I \rangle_{\mathbbm{Z}},
\end{split}
\end{equation}
we can describe the character groups of the subtorus $Z(L_{P})^{0} \subset T^{\mathbbm{C}}$ as follows 
\begin{equation}
 \mathbbm{X}(Z(L_{P})^{0}) \cong \mathbbm{X}(T^{\mathbbm{C}})/\mathbbm{X}_{0}'(T^{\mathbbm{C}}),
\end{equation}
for more details on the above facts, see for instance \cite[Part II, p. 169]{jantzen2003representations}. 

\begin{remark}
From now on, we will not distinguish the isomorphic abelian groups described in Eq. (\ref{charactertrosuabelian}) and Eq. (\ref{characterparabolic}). Therefore, we have
\begin{equation}
\label{identificationcharacter}
{\rm{Hom}}(P,\mathbbm{C}^{\times}) = \mathbbm{X}_{0}(T^{\mathbbm{C}}) = \bigoplus_{\alpha \in \Delta \backslash I}\mathbbm{Z}\varpi_{\alpha}.
\end{equation}
\end{remark}

Let $P  = P_{I}\subset G^{\mathbbm{C}}$ be a parabolic subgroup. Given a finite-dimensional irreducible representation $\theta \colon P \to {\rm{GL}}(V)$, from the Levi decomposition
\begin{center}
$P= L_{P}R_{u}(P)^{+} = L_{P} \ltimes R_{u}(P)^{+}$,
\end{center}
it follows that the unipotent radical $R_{u}(P)^{+} \subset P$ acts trivially on $V$, i.e., $\theta|_{R_{u}(P)^{+}} = {\rm{id}}_{V}$. Therefore, the finite-dimensional irreducible representations of $P$ can be completely classified by finite-dimensional irreducible representations of its Levi component $L_{P} \subset P$. 

In order to classify the finite-dimensional irreducible representations of $L_{P}$, let us introduce some terminology. Considering the root datum
\begin{equation}
(\mathbbm{X}(T^{\mathbbm{C}}),\langle I \rangle, \alpha \mapsto \alpha^{\vee}),
\end{equation}
we have the following definition.

\begin{definition}
Given $\chi \in \mathbbm{X}(T^{\mathbbm{C}})$, we say that $\chi$ is dominant for $L_{P}$ if 
\begin{equation}
\langle \chi,\alpha^{\vee} \rangle \geq 0, \ \ \forall \alpha \in I.
\end{equation}
Under the identification $\mathbbm{X}(T^{\mathbbm{C}}) \cong \Lambda$, we say that an integral weight $\lambda \in \mathbbm{X}(T^{\mathbbm{C}})$ is dominant for $L_{P}$ if $\langle \lambda,\alpha^{\vee} \rangle \geq 0, \forall \alpha \in I$.
\end{definition}
In the above setting, we denote by 
\begin{equation}
 {\rm{Rep}}(P)_{\text{irrd.}} := \Big \{ \chi \in \mathbbm{X}(T^{\mathbbm{C}}) \ \Big | \ \langle \chi, \alpha^{\vee}\rangle \geq 0, \ \ \forall \alpha \in I\Big \}.
\end{equation}
the set of characters which are dominant for $L_{P}$. Since $L_{P}$ is a reductive Lie group, the classification of its finite-dimensional irreducible representations follows from the following theorem.

\begin{theorem}
\label{L_Pirreduciblemodule}
For every dominant weight $\lambda \in {\rm{Rep}}(P)_{\text{irrd.}}$, there exists a finite dimensional irreducible representation $W(\lambda)$ of $L_{P}$, unique up to isomorphism, that decomposes as a representation of $T^{\mathbbm{C}}$ into
\begin{equation}
W(\lambda) = \bigoplus_{\mu \in \Delta(W(\lambda))}W(\lambda)_{\mu},
\end{equation}
such that 
\begin{enumerate}
\item[(a)] $\lambda \in \Delta(W(\lambda))$ and $\dim(W(\lambda)_{\lambda}) = 1$,
\item[(b)] $\forall \mu \in \Delta(W(\lambda))$, we have $\mu = \lambda - \sum_{\alpha \in I}m_{\alpha}\alpha$, $m_{\alpha} \in \mathbbm{Z}_{\geq 0}$, $\forall \alpha \in I$.
\end{enumerate}
Moreover, every finite dimensional irreducible representation of $L_{P}$ is isomorphic to $W(\lambda)$ for a unique dominant $\lambda \in {\rm{Rep}}(P)_{\text{irrd.}}$.
\end{theorem}

\begin{remark}
\label{character1-dimmod}
In the above setting, it is worth pointing out that 
\begin{equation}
 {\rm{Rep}}(P)_{\text{irrd.}} = \Big (\bigoplus_{\alpha \in I}\mathbbm{Z}_{\geq 0}\varpi_{\alpha} \Big )\oplus \Big ( \bigoplus_{\alpha \in \Delta \backslash I}\mathbbm{Z}\varpi_{\alpha} \Big ).
 \end{equation}
Also, notice that ${\rm{Hom}}(P_{I},\mathbbm{C}^{\times}) = \mathbbm{X}_{0}(T^{\mathbbm{C}})  \subset {\rm{Rep}}(P)_{\text{irrd.}} $.
\end{remark}
From the last theorem, we have the following corollary.

\begin{corollary}
The set of isomorphism classes of finite dimensional irreducible representations of a parabolic Lie subgroup $P \subset G^{\mathbbm{C}}$ is in one-to-one correspondence with the set of dominant weights (characters) for $L_{P} \subset P$.
\end{corollary}

\begin{remark}
Considering the semisimple component $S_{P} := [L_{P},L_{P}]$ of $L_{P}$ and its Lie algebra $\mathfrak{s}_{P} = {\rm{Lie}}(S_{P})$, we shall denote 
\begin{equation}
\Lambda_{\mathfrak{s}_{P}}^{+} := \bigoplus_{\alpha \in I}\mathbbm{Z}_{\geq 0}\varpi_{\alpha} \ \ \ \text{and} \ \ \ \Lambda_{P} := \bigoplus_{\alpha \in \Delta \backslash I}\mathbbm{Z}\varpi_{\alpha},
\end{equation}
notice that $\Lambda_{P} = \mathbbm{X}_{0}(T^{\mathbbm{C}}) = {\rm{Hom}}(P,\mathbbm{C}^{\times})$, see Eq. (\ref{identificationcharacter}). From above, we have that the isomorphism classes of irreducible representations of $P$ (and $L_{P}$) is parameterized by
\begin{equation}
{\rm{Rep}}(P)_{\text{irrd.}} = \Lambda_{\mathfrak{s}_{P}}^{+} \oplus \Lambda_{P}
\end{equation}
\end{remark}

\begin{remark}
In view of the above corollary and Remark \ref{character1-dimmod}, it follows that the one-dimensional representations of $P$ are classified by $\mathbbm{X}_{0}(T^{\mathbbm{C}})  \subset {\rm{Rep}}(P)_{\text{irrd.}}$. In this case, given $\chi \in \mathbbm{X}_{0}(T^{\mathbbm{C}})$ we denote by $\mathbbm{C}_{\chi} = \mathbbm{C}$ the associated one-dimensional representations of $P$ defined by 
\begin{center}
$p \cdot z = \chi(p)z$, \ \ $\forall p \in P$, \ \ $\forall z \in \mathbbm{C}$.
\end{center}
In particular, if $(d\chi)_{e} = \lambda$, we also denote $\mathbbm{C}_{\lambda}$ to mean $\mathbbm{C}_{\chi}$.
\end{remark}

Now we consider the following result.

\begin{lemma}
\label{irredparabolic}
Given $\lambda \in {\rm{Rep}}(P)_{\text{irrd.}}$, such that $\lambda = \lambda_{s} + \lambda_c$, where $\lambda_{s} \in \Lambda_{\mathfrak{s}_{P}}$ and $\lambda_{c} \in \Lambda_{P}$, then we have an isomorphism of finite-dimensional irreducible $L_{P}$-representations
\begin{equation}
W(\lambda) \cong W(\lambda_{s}) \otimes \mathbbm{C}_{\lambda_{c}}.
\end{equation}
\end{lemma}
\begin{proof}
Considering $\lambda \in {\rm{Rep}}(P)_{\text{irrd.}}$, such that $\lambda = \lambda_{s} + \lambda_c$, where $\lambda_{s} \in \Lambda_{\mathfrak{s}_{P}}^{+}$ and $\lambda_{c} \in \Lambda_{P}$, since $\lambda_{s} \in {\rm{Rep}}(P)_{\text{irrd.}}$, it follows from Theorem \ref{L_Pirreduciblemodule} that there exists a finite-dimensional irreducible representation $W(\lambda_{s})$ of $L_{P}$ with highest weight $\lambda_{s}$. 

Therefore, since tensor product $W(\lambda_{s}) \otimes \mathbbm{C}_{\lambda_{c}}$ is a finite-dimensional irreducible representation of $L_{P}$ with highest weight $\lambda_{s} + \lambda_{c} = \lambda$, we conclude from Theorem \ref{L_Pirreduciblemodule} that $W(\lambda) \cong W(\lambda_{s}) \otimes \mathbbm{C}_{\lambda_{c}}$ as finite-dimensional irreducible $L_{P}$-representations.
\end{proof}

\begin{remark}
In the case that $I = \varnothing$, that is, $P = B$ (Borel subgroup), it follows that every irreducible representation of $B$ is one-dimensional. Thus, we have ${\rm{Rep}}(B)_{\text{irrd.}} = {\rm{Hom}}(B,\mathbbm{C}^{\times})$.
\end{remark}

\section{Homogeneous vector bundles} We start this section recalling some generalities on Hermitian vector bundles which can be found in \cite{Kobayashi+1987}, \cite{wells1980differential}, \cite{MR2093043}, \cite{rudolph2017differential}, \cite{Griffiths}. Let $X$ be a complex manifold. Given a $C^{\infty}$-complex vector bundle ${\bf{E}} \to X$, we denote
\begin{itemize}
\item[(1)] $\mathcal{A}^{k}({\bf{E}}) := \Gamma^{\infty}(\bigwedge^{k}(TX)_{\mathbbm{C}} \otimes {\bf{E}}) = {\bf{E}}$-valued complex $k$-differential forms;
\item[(2)] $\mathcal{A}^{0}({\bf{E}}) = \Gamma^{\infty}({\bf{E}})$ and $\mathcal{A}^{k}({\bf{E}}) = \bigoplus_{p+q = k}\mathcal{A}^{p,q}({\bf{E}})$, such that $\mathcal{A}^{p,q}({\bf{E}}) = \Gamma(\Lambda^{p,q}TX \otimes {\bf{E}})$.
\end{itemize}
A connection $\nabla$ on ${\bf{E}} \to X$ is a $\mathbbm{C}$-linear homomorphism 
\begin{equation}
\nabla \colon \mathcal{A}^{0}({\bf{E}}) \to  \mathcal{A}^{1}({\bf{E}}),
\end{equation}
satisfying the Leibniz rule
\begin{equation}
\nabla(f{\bf{s}}) = {\rm{d}}f \otimes {\bf{s}} + f \nabla {\bf{s}}, \ \ \ \forall f \in C^{\infty}(X,\mathbbm{C}), \ \ \forall {\bf{s}} \in \mathcal{A}^{0}({\bf{E}}).
\end{equation}
By extending the connection $\nabla$ to a $\mathbbm{C}$-linear map $\nabla \colon \mathcal{A}^{p}({\bf{E}}) \to  \mathcal{A}^{p+1}({\bf{E}})$, satisfying
\begin{equation}
\nabla (\eta \otimes {\bf{s}}) = \eta \wedge (\nabla {\bf{s}}) + {\rm{d}}\eta \otimes {\bf{s}}, \ \ \ \forall \eta \in \Omega^{p}(X;\mathbbm{C}), \ \ \forall {\bf{s}} \in \mathcal{A}^{0}({\bf{E}}),
\end{equation}
we define the curvature $F_{\nabla}$ of $\nabla$ as being
\begin{equation}
F_{\nabla} := \nabla \circ \nabla \colon \mathcal{A}^{0}({\bf{E}}) \to  \mathcal{A}^{2}({\bf{E}}).
\end{equation}
It is straightforward to show that $F_{\nabla}$ is $C^{\infty}(X,\mathbbm{C})$-linear, so we can consider 
\begin{equation}
F_{\nabla} \in \mathcal{A}^{2}({\rm{End}}({\bf{E}})).
\end{equation}
A holomorphic vector bundle is defined by a $C^{\infty}$-complex vector bundle ${\bf{E}} \to X$ together with an operator (a.k.a. holomorphic structure)
\begin{equation}
\bar{\partial}_{{\bf{E}}} \colon \mathcal{A}^{p,q}({\bf{E}}) \to \mathcal{A}^{p,q+1}({\bf{E}}),
\end{equation}
satisfying the following conditions\footnote{A section ${\bf{s}} \colon U \subseteq X_{P} \to {\bf{E}}$ is said to be holomorphic if $\bar{\partial}_{{\bf{E}}} {\bf{s}} \equiv 0$. The space of global holomorphic sections of a holomorphic vector bundle is denoted by ${\rm{H}}^{0}(X_{P},{\bf{E}})$.}:
\begin{enumerate}
\item[(1)] $\bar{\partial}_{{\bf{E}}} \circ \bar{\partial}_{{\bf{E}}} = 0$;
\item[(2)] $\bar{\partial}_{{\bf{E}}} (f{\bf{s}}) = \bar{\partial}f \otimes {\bf{s}} + f \bar{\partial}_{{\bf{E}}} {\bf{s}}, \ \ \ \forall f \in C^{\infty}(X,\mathbbm{C}), \ \ \forall {\bf{s}} \in \mathcal{A}^{0}({\bf{E}}).$
\end{enumerate}
In the presence of a holomorphic structure $\bar{\partial}_{{\bf{E}}}$ on ${\bf{E}} \to X$, we say that a connection $\nabla$ is compatible with $\bar{\partial}_{{\bf{E}}}$ if 
\begin{equation}
\nabla^{0,1} = \bar{\partial}_{{\bf{E}}},
\end{equation}
here we consider the decomposition $\mathcal{A}^{1}({\bf{E}}) = \mathcal{A}^{1,0}({\bf{E}}) \oplus \mathcal{A}^{0,1}({\bf{E}})$. In this work we are concerned with $C^{\infty}$-complex vector bundle ${\bf{E}} \to X$ endowed with a Hermitian structure ${\bf{h}}$ (i.e., a $\mathbbm{C}$-anti-linear isomorphism ${\bf{h}} \colon {\bf{E}} \to {\bf{E}}^{\ast}$). We shall refer to a pair $({\bf{E}},{\bf{h}})$ as a Hermitian vector bundle. In this setting, we have the following standard result (e.g. \cite{Kobayashi+1987}, \cite{rudolph2017differential}).

\begin{theorem}
Let $({\bf{E}},{\bf{h}})$ be a holomorphic Hermitian vector bundle. Then, there exists a unique connection $\nabla$ (Chern connection), such that 
\begin{enumerate}
\item[(h1)] $\nabla {\bf{h}} \equiv 0$;
\item[(h2)] $\nabla^{0,1} = \bar{\partial}_{{\bf{E}}}$.
\end{enumerate}
\end{theorem}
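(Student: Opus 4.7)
The plan is to establish uniqueness first by a tensorial difference argument and then produce existence locally via an explicit formula for the connection matrix in a holomorphic frame, patching the local pieces together using the uniqueness statement.

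For uniqueness, suppose $\nabla,\nabla'$ both satisfy (h1) and (h2) and set $A := \nabla - \nabla' \in \mathcal{A}^{1}({\rm{End}}({\bf{E}}))$. Both connections share the same $(0,1)$-part $\bar{\partial}_{{\bf{E}}}$, so (h2) immediately yields $A^{0,1}=0$ and hence $A$ is of pure type $(1,0)$. Subtracting the compatibility identity $d{\bf{h}}(s,t) = {\bf{h}}(\nabla s,t) + {\bf{h}}(s,\nabla t)$ for each connection gives
\begin{equation*}
{\bf{h}}(A s,t) + {\bf{h}}(s,A t) = 0
\end{equation*}
for all local sections $s,t$. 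The first summand is of bidegree $(1,0)$ and the second of bidegree $(0,1)$, so they must vanish separately; nondegeneracy of ${\bf{h}}$ then forces $A \equiv 0$.

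For existence I would work locally. Over a trivializing open $U \subset X$ with holomorphic frame $\{e_{1},\dots,e_{r}\}$, form the Hermitian matrix $H=(h_{ij})$, $h_{ij}:={\bf{h}}(e_{i},e_{j})$. The natural candidate is the $(1,0)$-form valued matrix
\begin{equation*}
\theta := \bar{H}^{-1}\partial \bar{H}, \qquad \nabla_{U} e_{j} := \sum_{i}\theta^{i}{}_{j}\, e_{i},
\end{equation*}
extended by the Leibniz rule. Property (h2) is immediate, since $\theta$ is of pure type $(1,0)$ and $\bar{\partial}_{{\bf{E}}} e_{j} = 0$ for holomorphic frame sections. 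Property (h1) reduces to the matrix identity $dH = \theta^{T}H + H\bar{\theta}$; its $(1,0)$-part is the defining relation $\partial \bar{H} = \theta^{T}\bar{H}$ (transposed, using $\bar{H}=H^{T}$), while its $(0,1)$-part is obtained by complex conjugation.

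To globalize, take any two trivializing opens $U,V$ with the corresponding local Chern connections $\nabla_{U},\nabla_{V}$. On $U\cap V$ both satisfy (h1) and (h2), so the uniqueness statement forces $\nabla_{U}=\nabla_{V}$ there, and the local models therefore patch into a single global connection $\nabla$ on ${\bf{E}}$ satisfying both conditions. I do not anticipate a substantial obstacle in this argument: it is classical and purely algebraic once one guesses the formula $\theta = \bar{H}^{-1}\partial \bar{H}$. The only delicate point is bookkeeping—tracking the linearity conventions for ${\bf{h}}$ and the correct transposes/conjugates so that the bidegree decomposition of the compatibility equation matches the paper's notation.
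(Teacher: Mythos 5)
Your proof is correct and is the standard argument: uniqueness via the type decomposition of the difference tensor, existence via the local formula $\theta=\bar H^{-1}\partial\bar H$ in a holomorphic frame (equivalent, up to the frame-versus-components transpose convention, to the expression $A={\bf{h}}^{-1}\partial{\bf{h}}$ recorded in the paper's remark). The paper itself states this theorem without proof, citing standard references, so your argument matches the intended classical route.
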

\begin{remark}
In the setting of the above theorem, a connection satisfying condition (h1) is said to be compatible with ${\bf{h}}$. It is worth mentioning that, locally, from the conditions (h1) and (h2), we have the Chern connection $\nabla = \nabla^{1,0} + \nabla^{0,1}$ given by
\begin{equation}
\nabla^{1,0} = \partial + A \ \ \ \ {\text{and}} \ \ \ \ \nabla^{0,1} = \bar{\partial}, 
\end{equation}
such that $A = {\bf{h}}^{-1}\partial {\bf{h}}$ is a matrix of $(1,0)$-forms on $X_{P}$. From the conditions (h1) and (h2), we also have that the curvature $F_{\nabla} \in \mathcal{A}^{2}({\rm{End}}({\bf{E}}))$ of the Chern connection $\nabla$ is skew-Hermitian w.r.t. ${\bf{h}}$, and it is given locally by
\begin{equation}
F_{\nabla} = {\rm{d}}A + A \wedge A,
\end{equation}
i.e., we have  $F_{\nabla} \in \mathcal{A}^{1,1}({\rm{End}}({\bf{E}}))$. We shall denote the curvature of the Chern connection by $F({\bf{h}})$ in order to emphasize the dependence on the underlying Hermitian structure ${\bf{h}}$.
\end{remark}

With this general Hermitian framework established, we now restrict our attention to the homogeneous setting. Consider the canonical holomorphic $P$-principal bundle $P \hookrightarrow G^{\mathbbm{C}} \to G^{\mathbbm{C}}/P$. 

For every finite-dimensional representation $\theta \colon P \to {\rm{GL}}(W)$ we can form an associated (homogeneous) holomorphic vector bundle
\begin{equation}
{\bf{E}}_{\theta} :=  G^{\mathbbm{C}} \times_{P}W = (G^{\mathbbm{C}} \times W)/ \sim, \ {\text{s.t.}} \ \ \ (gp,v) \sim (g,\theta(p)v).
\end{equation}
By choosing a trivializing good open covering $G^{\mathbbm{C}}/P = \bigcup_{i \in J}U_{i}$ for the holomorphic $P$-principal bundle $P \hookrightarrow G^{\mathbbm{C}} \to G^{\mathbbm{C}}/P$, in terms of $\check{C}$ech cocycles we can write 
\begin{equation}
\label{Pcocycle}
G^{\mathbbm{C}} = \Big \{(U_{i})_{i \in J}, \psi_{ij} \colon U_{i} \cap U_{j} \to P \Big \}.
\end{equation}
From above, it follows that 
\begin{equation}
{\bf{E}}_{\theta} =  \Big \{(U_{i})_{i \in J}, (\theta \circ \psi_{ij})\colon U_{i} \cap U_{j} \to {\rm{GL}}(W) \Big \}.
\end{equation}
The construction above defines an equivalence between the category of homogeneous holomorphic vector bundles over $G^{\mathbbm{C}}/P$ and the category of finite-dimensional representations of $P$.\\

We observe that, if we have a parabolic Lie subgroup $P \subset G^{\mathbbm{C}}$, such that $P = P_{I}$, for some $I \subset \Delta$, the identification provided in Eq. (\ref{characterparabolic}) shows us that 
\begin{equation}
\label{generatorspicard}
{\text{Hom}}(P,\mathbbm{C}^{\times}) = \mathbbm{X}_{0}(T^{\mathbbm{C}}) = \big \langle \vartheta_{{\varpi}_{\alpha}}\  \big | \ \ \alpha \in \Delta \backslash I \big \rangle = \Lambda_{P},
\end{equation}
where $\vartheta_{{\varpi}_{\alpha}}$ is the character defined by the fundamental weight $\varpi_{\alpha}$, $\alpha \in I$. In particular, since ${\text{Hom}}(P,\mathbbm{C}^{\times})$ classifies the one-dimensional representations of $P$, considering 
\begin{equation}
\label{linecocycle}
 \mathscr{O}_{\alpha}(1) := {\bf{E}}_{\vartheta_{{\varpi}_{\alpha}}^{-1}} = \Big \{(U_{i})_{i \in J}, (\vartheta_{\varpi_{\alpha}}^{-1} \circ \psi_{i j}) \colon U_{i} \cap U_{j} \to \mathbbm{C}^{\times} \Big \},
\end{equation}
for every $\alpha \in \Delta \backslash I$. We also denote 
\begin{equation}
\mathscr{O}_{\alpha}(k) := \mathscr{O}_{\alpha}(1)^{\otimes k}, \ \ \ \forall k \in \mathbbm{Z}, \ \ \forall \alpha \in \Delta \backslash I.
\end{equation}
Notice that $\mathscr{O}_{\alpha}(0) = \mathcal{O}_{X_{P}}$, where $\mathcal{O}_{X_{P}}$ is the structure sheaf of $X_{P}$.

In the above setting we consider $\mathbbm{C}_{-\varpi_{\alpha}}$ as being the one-dimensional $P$-module defined by the character $\vartheta_{\varpi_{\alpha}}^{-1}$. As we shall see, the choice of $\vartheta_{\varpi_{\alpha}}^{-1}$ instead of $\vartheta_{\varpi_{\alpha}}$ is made so that the concept of "positivity" for Chern classes is compatible for the sheaf-theoretic and differential-geometric point of view.

\begin{remark}
By Lemma \ref{irredparabolic}, every irreducible representation $(\theta, W(\lambda))$ of $P \subset G^{\mathbbm{C}}$ splits algebraically as $W(\lambda_s) \otimes \mathbb{C}_{\lambda_c}$. This induces a canonical geometric splitting of the associated homogeneous bundle as follows
\begin{equation}
\label{irreddecomp}
{\bf{E}}_{\theta} := {\bf{E}}_{s} \otimes {\bf{L}}_{c},
\end{equation}
such that ${\bf{E}}_{s} = G^{\mathbbm{C}} \times_{P} W(\lambda_{s})$ and ${\bf{L}}_{c} = G^{\mathbbm{C}} \times_{P}\mathbbm{C}_{\lambda_{c}}$ is a homogeneous line bundle.
\end{remark}

The isomorphism given in Eq. (\ref{generatorspicard}) can be understood in terms of the following map from ${\rm{Hom}}(P,\mathbbm{C}^{\times})$ to $H^{1}(X_{P},\mathcal{O}_{X_{P}}^{\ast})$:
\begin{equation}
\label{cocycle}
\vartheta  \mapsto \Big \{(U_{i})_{i \in J}, (\vartheta^{-1} \circ \psi_{i j}) \colon U_{i} \cap U_{j} \to \mathbbm{C}^{\times} \Big \} = {\bf{E}}_{\vartheta^{-1}},
\end{equation}
for every $\vartheta \in {\rm{Hom}}(P,\mathbbm{C}^{\times})$. In particular, in terms of generators, we have $\vartheta_{\varpi_{\alpha}} \mapsto \mathscr{O}_{\alpha}(1)$, for every $\alpha \in \Delta \backslash I$, so
\begin{equation}
{\rm{Pic}}(X_{P}) = H^{1}(X_{P},\mathcal{O}_{X_{P}}^{\ast}) = \big \langle  \mathscr{O}_{\alpha}(1) \ \big |  \ \alpha \in \Delta \backslash I \big \rangle.
\end{equation}
The second equality in the above equation follows from the following fact. If ${\bf{L}} \to X_{P}$ is a holomorphic line bundle, since $H^{2}(X_{P},\mathbb{Z})$ is discrete and $G^{\mathbb{C}}$ is connected, the map $g \mapsto c_{1}(g^{\ast}{\bf{L}})$ must be constant, so ${\bf{L}} \cong g^{\ast}{\bf{L}}$, $\forall g \in G^{\mathbb{C}}$. Hence, we conclude that every holomorphic line bundle over $X_{P}$ is a homogeneous line bundle.

Further, from the short exact sequence of sheaves 
\begin{center}
\begin{tikzcd}0 \arrow[r] &  \underline{\mathbbm{Z}} \arrow[r] & \mathcal{O}_{X_{P}} \arrow[r,"\exp"]  & \mathcal{O}_{X_{P}}^{\ast} \arrow[r] & 0 \end{tikzcd}
\end{center}
we get a long exact sequence 
\begin{center}
\begin{tikzcd} \cdots \arrow[r] & H^{1}(X_{P},\mathcal{O}_{X_{P}}^{\ast}) \arrow[r,"\delta"]  & H^{2}(X_{P},\mathbbm{Z}) \arrow[r] & \cdots \end{tikzcd}
\end{center}
where $\delta$ is the Bockstein operator (e.g. \cite{wells1980differential}). From above, through the isomorphism provided by Eq. (\ref{cocycle}), we can define
\begin{equation}
c_{1}(\mathscr{O}_{\alpha}(1)) := \delta(\mathscr{O}_{\alpha}(1)) = \delta \big \{ \vartheta_{\varpi_{\alpha}}^{-1} \circ \psi_{ij}\big\},
\end{equation}
for every $\alpha \in \Delta \backslash I$ (e.g. \cite{Griffiths}). In order to describe the $(1,1)$-form which represents the cohomology class $\delta \big \{ \vartheta_{\varpi_{\alpha}}^{-1} \circ \psi_{ij}\} \in H^{2}(X_{P},\mathbbm{Z})$, $\alpha \in \Delta \backslash I$, we consider the following theorem.

\begin{theorem}[Azad-Biswas, \cite{AZAD}]
\label{AZADBISWAS}
Let $\omega \in \Omega^{1,1}(X_{P})^{G}$ be a closed invariant real $(1,1)$-form, then we have

\begin{center}

$\pi^{\ast}\omega = \sqrt{-1}\partial \overline{\partial}\varphi$,

\end{center}
where $\pi \colon G^{\mathbbm{C}} \to X_{P}$ is the natural projection, and $\varphi \colon G^{\mathbbm{C}} \to \mathbbm{R}$ is given by 
\begin{center}
$\varphi(g) = \displaystyle \sum_{\alpha \in \Delta \backslash I}c_{\alpha}\log \Big ( \big | \big |\Pi_{\varpi_{\alpha}}(g)v_{\varpi_{\alpha}}^{+}\big | \big | \Big )$, \ \ \ \ $(\forall g \in G^\mathbbm{C})$
\end{center}
with $c_{\alpha} \in \mathbbm{R}$, for every $\alpha \in \Delta \backslash I$. Conversely, every function $\varphi$ as above defines a closed invariant real $(1,1)$-form $\omega_{\varphi} \in \Omega^{1,1}(X_{P})^{G}$. Moreover, $\omega_{\varphi}$ defines a $G$-invariant K\"{a}hler form on $X_{P}$ if and only if $c_{\alpha} > 0$, for every $\alpha \in \Delta \backslash I$.
\end{theorem}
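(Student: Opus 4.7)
The plan is to reduce everything to the classification of the Picard group described just before the statement. By the isomorphism $\mathrm{Pic}(X_P) \cong \Lambda_P = \bigoplus_{\alpha \in \Delta \setminus I} \mathbb{Z}\varpi_\alpha$ (and the fact that $h^{p,q}(X_P) = 0$ for $p \ne q$), the real cohomology $H^{1,1}(X_P,\mathbb{R})$ is a free $\mathbb{R}$-module with basis $\{c_1(\mathscr{O}_\alpha(1))\}_{\alpha \in \Delta \setminus I}$. My first step is to construct, for each such $\alpha$, a canonical $G$-invariant closed real $(1,1)$-form $\omega_\alpha$ representing $c_1(\mathscr{O}_\alpha(1))$, with the explicit potential $\log \|\Pi_{\varpi_\alpha}(g)v_{\varpi_\alpha}^+\|$ (up to the factor $\tfrac12$). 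The decisive input here is the observation that for $\alpha \in \Delta \setminus I$ and $\beta \in I$ one has $\langle \varpi_\alpha, \beta^\vee\rangle = 0$, which forces $e_{-\beta}v_{\varpi_\alpha}^+ = 0$ by the standard $\mathfrak{sl}_2(\beta)$-argument. Combined with $v_{\varpi_\alpha}^+$ being a highest weight vector, this implies $\Pi_{\varpi_\alpha}(p)v_{\varpi_\alpha}^+ = \vartheta_{\varpi_\alpha}(p)\,v_{\varpi_\alpha}^+$ for every $p \in P$.

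From the previous identity, right $P$-translation changes $\log\|\Pi_{\varpi_\alpha}(g)v_{\varpi_\alpha}^+\|$ by $\log|\vartheta_{\varpi_\alpha}(p)|$, which is pluriharmonic in $p$ and constant in $g$. Consequently, for every $p \in P$, $R_p^*(\sqrt{-1}\,\partial\bar\partial\log\|\Pi_{\varpi_\alpha}(g)v_{\varpi_\alpha}^+\|) = \sqrt{-1}\,\partial\bar\partial\log\|\Pi_{\varpi_\alpha}(g)v_{\varpi_\alpha}^+\|$, and a short verification on vertical vector fields (generated by $\mathfrak{p}$ acting on $G^{\mathbb{C}}$ from the right) shows that this $(1,1)$-form is basic with respect to $\pi \colon G^{\mathbb{C}} \to X_P$; hence it descends to a closed $G$-invariant $(1,1)$-form $\omega_\alpha$ on $X_P$ whose Chern class agrees with $c_1(\mathscr{O}_\alpha(1))$ (this last identification uses that the construction realizes the canonical $G$-invariant Hermitian metric on $\mathscr{O}_\alpha(1)$, in view of the cocycle description given in Eq.\ (\ref{linecocycle})).

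The remaining ``any $\omega$ has this shape'' part of the statement now proceeds by a standard averaging-plus-$\partial\bar\partial$-lemma argument. Given $\omega$ closed, real, $G$-invariant, of type $(1,1)$, I write $[\omega] = \sum_{\alpha \in \Delta \setminus I} c_\alpha[\omega_\alpha]$ in $H^{1,1}(X_P,\mathbb{R})$ and apply the global $\partial\bar\partial$-lemma on the compact Kähler manifold $X_P$ to get a smooth real $u$ with $\omega - \sum c_\alpha \omega_\alpha = \sqrt{-1}\,\partial\bar\partial u$. Averaging $u$ with respect to the Haar measure on $G$ preserves this identity (since both $\omega$ and each $\omega_\alpha$ are $G$-invariant), and yields a $G$-invariant function; but $G$ acts transitively on $X_P$, so every $G$-invariant function is constant, hence $\omega = \sum c_\alpha \omega_\alpha$. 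Pulling back to $G^{\mathbb{C}}$ gives the claimed formula with $\varphi = \sum_\alpha c_\alpha \log\|\Pi_{\varpi_\alpha}(g)v_{\varpi_\alpha}^+\|$; the converse direction has already been established by the construction of each $\omega_\alpha$.

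For the Kähler criterion, by $G$-invariance positivity need only be checked at the origin $\mathrm{o} \in X_P$, where $T_\mathrm{o}^{1,0}X_P = \sum_{\beta \in \Phi_I^+}\mathfrak{g}_{-\beta}$. A direct computation of $(\omega_\varphi)_{\mathrm{o}}$ on the $\mathfrak{su}_2$-triple $\{A_\beta, B_\beta\}$ (or equivalently, on the complex basis $\{e_{-\beta}\}$) shows that
\begin{equation*}
(\omega_\varphi)_{\mathrm{o}}\bigl(e_{-\beta},\overline{e_{-\beta}}\bigr) \;=\; (\text{positive const.})\sum_{\alpha \in \Delta \setminus I} c_\alpha\,\langle \varpi_\alpha,\beta^\vee\rangle, \qquad \beta \in \Phi_I^+,
\end{equation*}
and that mixed terms vanish by invariance under the maximal torus. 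Expanding $\beta = \sum_{\gamma \in \Delta} n_\gamma^\beta \gamma$ with $n_\gamma^\beta \geq 0$, the identity $\langle \varpi_\alpha, \beta^\vee\rangle = n_\alpha^\beta\,\langle \alpha,\alpha\rangle/\langle\beta,\beta\rangle$ reduces the positivity condition to $\sum_\alpha c_\alpha n_\alpha^\beta\,\langle\alpha,\alpha\rangle > 0$ for every $\beta \in \Phi_I^+$; taking $\beta = \alpha_0 \in \Delta \setminus I$ forces $c_{\alpha_0} > 0$, and conversely, since every $\beta \in \Phi_I^+$ has at least one $n_\alpha^\beta > 0$ with $\alpha \in \Delta \setminus I$ (otherwise $\beta \in \langle I\rangle^+$), the converse follows. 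I expect the main obstacle to be the careful bookkeeping in the descent argument of paragraph two—identifying exactly what the curvature of the $G$-invariant metric on $\mathscr{O}_\alpha(1)$ is on the total space $G^{\mathbb{C}}$ and matching signs/normalizations with the conventions fixed by Eq.\ (\ref{linecocycle}).
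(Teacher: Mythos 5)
Your argument is correct. Note that the paper does not actually prove this statement: it is imported verbatim from Azad--Biswas \cite{AZAD}, so there is no internal proof to compare against. What you have reconstructed is essentially the standard proof from that reference: the $P$-equivariance of the highest weight line $\mathbbm{C}v_{\varpi_{\alpha}}^{+}$ (via $\langle \varpi_{\alpha},\beta^{\vee}\rangle = 0$ for $\beta \in I$) gives the descent of $\sqrt{-1}\partial\bar{\partial}\log\|\Pi_{\varpi_{\alpha}}(g)v_{\varpi_{\alpha}}^{+}\|$ to a $G$-invariant representative of $c_{1}(\mathscr{O}_{\alpha}(1))$, the $\partial\bar{\partial}$-lemma plus averaging over $G$ and transitivity handle the converse inclusion, and the positivity criterion reduces to the computation $\mathcal{H}_{\varphi}(E_{\beta},E_{\beta}) = \sum_{\alpha}\tfrac{c_{\alpha}}{2}\langle\varpi_{\alpha},\beta^{\vee}\rangle$ together with the elementary observation that each $\beta \in \Phi_{I}^{+}$ has a strictly positive coefficient on some simple root in $\Delta\setminus I$. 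The individual ingredients you use (the transformation law of $q_{i} = \|\Pi_{\varpi_{\alpha}}({\bf{s}}_{i})v_{\varpi_{\alpha}}^{+}\|^{-2}$, the diagonal form of $\mathcal{H}_{\varphi}$ at the origin) are in fact the ones the paper itself invokes immediately after the statement and in the proof of Proposition \ref{eigenvalueatorigin}, so your normalization bookkeeping can be checked directly against those displays.
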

\begin{remark}
\label{innerproduct}
It is worth pointing out that the norm $|| \cdot ||$ considered in the above theorem is a norm induced from some $G$-invariant inner product $\langle \cdot, \cdot \rangle_{\alpha}$ fixed on $V(\varpi_{\alpha})$, $\forall \alpha \in \Delta \backslash I$. 
\end{remark}
Given $\mathscr{O}_{\alpha}(1) \in {\text{Pic}}(X_{P})$, for some $\alpha \in \Delta \backslash I$, consider a good cover $X_{P} = \bigcup_{i \in J} U_{i}$ which trivializes both $P \hookrightarrow G^{\mathbbm{C}} \to X_{P}$ and $ \mathscr{O}_{\alpha}(1) \to X_{P}$. Taking a collection of local sections $({\bf{s}}_{i})_{i \in J}$, such that ${\bf{s}}_{i} \colon U_{i} \to G^{\mathbbm{C}}$, we can define $q_{i} \colon U_{i} \to \mathbbm{R}^{+}$, such that 
\begin{equation}
\label{functionshermitian}
q_{i}(p) := \frac{1}{ \big | \big |\Pi_{\varpi_{\alpha}}({\bf{s}}_{i}(p))v_{\varpi_{\alpha}}^{+} \big | \big |^{2}},
\end{equation}
$\forall p \in U_{i}$, and $\forall i \in J$. Since 
\begin{enumerate}
\item ${\bf{s}}_{j} = {\bf{s}}_{i}\psi_{ij}$ on $U_{i} \cap U_{j} \neq \emptyset$,
\item $\Pi_{\varpi_{\alpha}}(a)v_{\varpi_{\alpha}}^{+} = \vartheta_{\varpi_{\alpha}}(a)v_{\varpi_{\alpha}}^{+}$, $\forall a \in P$, and $\forall \alpha \in \Delta \backslash I$, 
\end{enumerate}
the collection of functions $(q_{i})_{i \in J}$ satisfies
\begin{equation}
q_{j} = \frac{1}{|(\vartheta_{\varpi_{\alpha}} \circ \psi_{ij})|^{2}}q_{i} = |(\vartheta_{\varpi_{\alpha}}^{-1} \circ \psi_{ij})|^{2}q_{i},
\end{equation}
on $U_{i} \cap U_{j} \neq \emptyset$, $\forall i,j \in J$. From this, we can define a Hermitian structure ${\bf{h}}$ on $\mathscr{O}_{\alpha}(1)$ by taking on each trivialization $\tau_{i} \colon U_{i} \times \mathbbm{C} \to \mathscr{O}_{\alpha}(1)$ the metric defined by
\begin{equation}
\label{hermitian}
{\bf{h}}(\tau_{i}(x,v),\tau_{i}(x,w)) := \frac{v\overline{w}}{\big|\big|\Pi_{\varpi_{\alpha}}({\bf{s}}_{i}(x))v_{\varpi_{\alpha}}^{+}\big|\big|^{2}},
\end{equation}
for every $(x,v),(x,w) \in U_{i} \times \mathbbm{C}$. The Hermitian metric above induces a Chern connection 
\begin{center}
$\nabla \colon \mathcal{A}^{0}(\mathscr{O}_{\alpha}(1)) \to \mathcal{A}^{1}(\mathscr{O}_{\alpha}(1))$, 
\end{center}
such that 
\begin{equation}
\nabla {\bf{\sigma}}_{i} = \partial (\log q_{i} )\otimes {\bf{\sigma}}_{i},
\end{equation}
where ${\bf{\sigma}}_{i}(\cdot) = \tau_{i}(\cdot,1), \forall i \in J$. From this, the curvature of the Chern connection $\nabla$ described above is given by
\begin{equation}
\displaystyle \nabla (\nabla \sigma_{i}) = \partial \overline{\partial}\log \Big ( \big | \big | (\Pi_{\varpi_{\alpha}} \circ {\bf{s}}_{i})v_{\varpi_{\alpha}}^{+}\big | \big |^{2} \Big) \otimes \sigma_{i} = F({\bf{h}}) \otimes \sigma_{i}.
\end{equation}
Therefore, by considering the closed $G$-invariant $(1,1)$-form ${\bf{\Omega}}_{\alpha} \in \Omega^{1,1}(X_{P})^{G}$, which satisfies $\pi^{\ast}{\bf{\Omega}}_{\alpha} = \sqrt{-1}\partial \overline{\partial} \varphi_{\varpi_{\alpha}}$, where $\pi \colon G^{\mathbbm{C}} \to G^{\mathbbm{C}} / P = X_{P}$ is the canonical projection map, and 
\begin{equation}
\varphi_{\varpi_{\alpha}}(g) := \frac{1}{2\pi}\log \Big ( \big | \big |\Pi_{\varpi_{\alpha}}(g)v_{\varpi_{\alpha}}^{+} \big | \big |^{2} \Big ), 
\end{equation}
$\forall g \in G^{\mathbbm{C}}$, we have ${\bf{\Omega}}_{\alpha} \myeq (\pi \circ {\bf{s}}_{i})^{\ast}{\bf{\Omega}}_{\alpha} \myeq \frac{\sqrt{-1}}{2\pi}F({\bf{h}})$, i.e., 
\begin{equation}
c_{1}(\mathscr{O}_{\alpha}(1)) = \delta \big \{ \vartheta_{\varpi_{\alpha}}^{-1} \circ \psi_{ij} \big \} = [ {\bf{\Omega}}_{\alpha}] = \bigg [ \frac{\sqrt{-1}}{2\pi}F({\bf{h}})\bigg ], 
\end{equation}
for every $\alpha \in \Delta \backslash I$ (e.g. \cite{Griffiths}, \cite{wells1980differential}). Consider now the following result.

\begin{lemma}
\label{funddynkinline}
Let $\mathbbm{P}_{\beta}^{1} = \overline{\exp(\mathfrak{g}_{-\beta}){\rm{o}}} \subset X_{P}$, such that $\beta \in \Phi_{I}^{+}$. Then, 
\begin{equation}
\int_{\mathbbm{P}_{\beta}^{1}} {\bf{\Omega}}_{\alpha} = \langle \varpi_{\alpha}, \beta^{\vee}  \rangle, \ \forall \alpha \in \Delta \backslash I.
\end{equation}
\end{lemma}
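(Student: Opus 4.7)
The strategy is to reduce the integral to a computation on a rational curve coming from an $\mathfrak{sl}_{2}$-subalgebra, and then apply the classical fact that $\int_{\mathbbm{P}^{1}}c_{1}(\mathscr{O}(m)) = m$. I would first parametrize the open dense cell of $\mathbbm{P}_{\beta}^{1}$ via the holomorphic map $\iota_{\beta} \colon \mathbbm{C} \to X_{P}$, $z \mapsto \exp(zy_{-\beta})\cdot{\rm{o}}$, which lifts to the holomorphic section ${\bf{s}}(z) = \exp(zy_{-\beta})$ of $\pi \colon G^{\mathbbm{C}} \to X_{P}$ defined over this cell (the missing point at $z=\infty$ is of measure zero and does not affect the integral). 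By the identity $\pi^{\ast}{\bf{\Omega}}_{\alpha} = \sqrt{-1}\partial\overline{\partial}\varphi_{\varpi_{\alpha}}$ established right before the lemma, the pullback becomes
\begin{equation*}
\iota_{\beta}^{\ast}{\bf{\Omega}}_{\alpha} = \frac{\sqrt{-1}}{2\pi}\partial\overline{\partial}\log\big\Vert\Pi_{\varpi_{\alpha}}(\exp(zy_{-\beta}))v_{\varpi_{\alpha}}^{+}\big\Vert^{2}.
\end{equation*}

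Next I would analyze the vector $\Pi_{\varpi_{\alpha}}(\exp(zy_{-\beta}))v_{\varpi_{\alpha}}^{+}$ using the $\mathfrak{sl}_{2}(\beta)$-triple $\{x_{\beta}, y_{-\beta}, h_{\beta}\}$. The highest weight vector $v_{\varpi_{\alpha}}^{+}$ is annihilated by every positive root vector (in particular by $x_{\beta}$) and satisfies $\Pi_{\varpi_{\alpha}}(h_{\beta})v_{\varpi_{\alpha}}^{+} = \varpi_{\alpha}(h_{\beta})v_{\varpi_{\alpha}}^{+} = \langle\varpi_{\alpha},\beta^{\vee}\rangle v_{\varpi_{\alpha}}^{+}$. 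Setting $m := \langle\varpi_{\alpha},\beta^{\vee}\rangle \in \mathbbm{Z}_{\geq 0}$, standard $\mathfrak{sl}_{2}(\mathbbm{C})$-representation theory shows that the cyclic submodule $V_{\beta}:=\Pi_{\varpi_{\alpha}}(\mathfrak{U}(\mathfrak{sl}_{2}(\beta)))v_{\varpi_{\alpha}}^{+}$ is the irreducible $(m+1)$-dimensional $\mathfrak{sl}_{2}(\beta)$-module, with basis $v_{k}:=\Pi_{\varpi_{\alpha}}(y_{-\beta})^{k}v_{\varpi_{\alpha}}^{+}$ for $k=0,\ldots,m$ (and $v_{k}=0$ for $k>m$). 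Hence
\begin{equation*}
\Pi_{\varpi_{\alpha}}(\exp(zy_{-\beta}))v_{\varpi_{\alpha}}^{+} = \sum_{k=0}^{m}\frac{z^{k}}{k!}v_{k}.
\end{equation*}

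Since the $v_{k}$ have distinct $h_{\beta}$-weights $m-2k$, they are orthogonal with respect to the $G$-invariant (and hence $K_{\beta}:=\exp(\mathfrak{su}_{2}(\beta))$-invariant) Hermitian inner product fixed on $V(\varpi_{\alpha})$. A direct $\mathfrak{sl}_{2}$-computation — or, equivalently, the observation that the unique $SU(2)$-invariant Fubini-Study-type Hermitian metric on the irreducible $\mathfrak{sl}_{2}$-module of highest weight $m$ yields $\Vert v_{k}\Vert^{2}=(k!)^{2}\binom{m}{k}\Vert v_{0}\Vert^{2}$ — gives
\begin{equation*}
\big\Vert\Pi_{\varpi_{\alpha}}(\exp(zy_{-\beta}))v_{\varpi_{\alpha}}^{+}\big\Vert^{2} = \Vert v_{\varpi_{\alpha}}^{+}\Vert^{2}\,(1+|z|^{2})^{m}.
\end{equation*}
Substituting back and using $\partial\overline{\partial}\log\Vert v_{\varpi_{\alpha}}^{+}\Vert^{2}=0$,
\begin{equation*}
\int_{\mathbbm{P}_{\beta}^{1}}{\bf{\Omega}}_{\alpha} = \int_{\mathbbm{C}}\frac{\sqrt{-1}}{2\pi}\partial\overline{\partial}\log(1+|z|^{2})^{m} = m\cdot\int_{\mathbbm{P}^{1}}\omega_{{\rm{FS}}} = m = \langle\varpi_{\alpha},\beta^{\vee}\rangle,
\end{equation*}
which proves the claim.

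The main obstacle is justifying the identity $\Vert\Pi_{\varpi_{\alpha}}(\exp(zy_{-\beta}))v_{\varpi_{\alpha}}^{+}\Vert^{2}=\Vert v_{\varpi_{\alpha}}^{+}\Vert^{2}(1+|z|^{2})^{m}$, since the norm on $V(\varpi_{\alpha})$ is only fixed up to an overall scale on each irreducible. The cleanest way around this is to note that only the $\partial\overline{\partial}\log$ of this norm matters, so any positive rescaling of the inner product is irrelevant; what one truly needs is the orthogonality of the weight basis $\{v_{k}\}$ (guaranteed by $K_{\beta}$-invariance, which follows from $G$-invariance) together with the standard ratio $\Vert v_{k}\Vert^{2}/\Vert v_{k-1}\Vert^{2} = k(m-k+1)$, which comes from the $\mathfrak{sl}_{2}$ relations $[x_{\beta},y_{-\beta}]=h_{\beta}$ and skew-Hermiticity of $\Pi_{\varpi_{\alpha}}(A_{\beta})$, $\Pi_{\varpi_{\alpha}}(B_{\beta})$ with respect to the inner product. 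This produces exactly the binomial formula above and closes the argument.
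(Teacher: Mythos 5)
Your argument is correct and is essentially the standard proof of this lemma: the paper itself does not reprove it but defers to \cite{correa2023deformed} and \cite{AZAD}, where precisely this reduction to the irreducible $\mathfrak{sl}_{2}(\beta)$-submodule generated by $v_{\varpi_{\alpha}}^{+}$ (highest weight $m=\langle\varpi_{\alpha},\beta^{\vee}\rangle$ for $h_{\beta}$, since $\Pi_{\varpi_{\alpha}}(x_{\beta})v_{\varpi_{\alpha}}^{+}=0$) and the resulting identity $\|\Pi_{\varpi_{\alpha}}(\exp(zy_{-\beta}))v_{\varpi_{\alpha}}^{+}\|^{2}=\|v_{\varpi_{\alpha}}^{+}\|^{2}(1+c|z|^{2})^{m}$ are used to evaluate the degree. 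The one loose end you flag — that relating $y_{-\beta}$ to $e_{-\beta}$ (whose adjoint under the $G$-invariant inner product is $\Pi_{\varpi_{\alpha}}(e_{\beta})$, by skew-Hermiticity of $\Pi_{\varpi_{\alpha}}(A_{\beta})$ and $\Pi_{\varpi_{\alpha}}(B_{\beta})$) only pins down the norm ratios up to a positive constant $c$ — is indeed harmless, since $\int_{\mathbbm{C}}\frac{\sqrt{-1}}{2\pi}\partial\overline{\partial}\log(1+c|z|^{2})^{m}=m$ for any $c>0$.
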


A proof for the above result can be found in \cite{correa2023deformed}, see also \cite{FultonWoodward} and \cite{AZAD}. From the above lemma and Theorem \ref{AZADBISWAS}, we obtain the following fundamental result.

\begin{proposition}[\cite{correa2023deformed}]
\label{C8S8.2Sub8.2.3P8.2.6} 
Let $X_{P}$ be a complex flag variety associated with some parabolic Lie subgroup $P = P_{I}$. Then, we have
\begin{equation}
\label{picardeq}
{\text{Pic}}(X_{P}) = H^{1,1}(X_{P},\mathbbm{Z}) = H^{2}(X_{P},\mathbbm{Z}) = \displaystyle \bigoplus_{\alpha \in \Delta \backslash I}\mathbbm{Z}[{\bf{\Omega}}_{\alpha} ].
\end{equation}
\end{proposition}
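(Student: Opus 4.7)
The plan is to combine the identification of ${\rm Pic}(X_{P})$ with a torus character lattice established in the paragraphs preceding the statement with two standard facts about flag varieties (vanishing of $H^{q}(X_{P},\mathscr{O}_{X_{P}})$ for $q>0$ and the Bruhat cell decomposition), and then to extract the claimed $\mathbbm{Z}$-basis from the intersection computation of Lemma~\ref{funddynkinline}.

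First I would establish ${\rm Pic}(X_{P}) = H^{1,1}(X_{P},\mathbbm{Z})$. By Eq.~(\ref{generatorspicard}), ${\rm Pic}(X_{P}) \cong {\rm Hom}(T(\Delta\setminus I)^{\mathbbm{C}},\mathbbm{C}^{\times})$, which is free abelian of rank $|\Delta\setminus I|$ with basis $\{\vartheta_{\varpi_{\alpha}}\}_{\alpha\in\Delta\setminus I}$; via the correspondence of Eq.~(\ref{cocycle}) the line bundles $\mathscr{O}_{\alpha}(1)$ provide a corresponding free basis of ${\rm Pic}(X_{P})$. Feeding the vanishing $H^{1}(X_{P},\mathscr{O}_{X_{P}}) = H^{2}(X_{P},\mathscr{O}_{X_{P}}) = 0$ (a consequence of Bott vanishing, or of Kodaira vanishing applied to the Fano variety $X_{P}$) into the long exact exponential sequence quoted in the excerpt then turns the Bockstein $\delta$ into an isomorphism of ${\rm Pic}(X_{P})$ onto the sublattice $H^{1,1}(X_{P},\mathbbm{Z})\subseteq H^{2}(X_{P},\mathbbm{Z})$.

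For the equality $H^{1,1}(X_{P},\mathbbm{Z}) = H^{2}(X_{P},\mathbbm{Z})$ I would invoke the Bruhat decomposition $X_{P} = \bigsqcup_{w \in W^{P}} Bw{\rm o}$: each Schubert cell is isomorphic to an affine space $\mathbbm{C}^{\ell(w)}$, so $X_{P}$ carries a CW-structure with only even-dimensional cells. Standard cellular cohomology then yields that $H^{\ast}(X_{P},\mathbbm{Z})$ is torsion-free and concentrated in even degrees, and in such degrees the Hodge decomposition collapses to $H^{2k}(X_{P},\mathbbm{C})=H^{k,k}(X_{P},\mathbbm{C})$; combining these statements gives $H^{2}(X_{P},\mathbbm{Z})=H^{1,1}(X_{P},\mathbbm{Z})$.

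The classes $[{\bf\Omega}_{\alpha}] = c_{1}(\mathscr{O}_{\alpha}(1))$ were identified in the paragraphs preceding the statement, so the first two equalities immediately give $H^{2}(X_{P},\mathbbm{Z}) = \sum_{\alpha\in\Delta\setminus I}\mathbbm{Z}[{\bf\Omega}_{\alpha}]$. To see the sum is direct I would apply Lemma~\ref{funddynkinline} to obtain the pairing matrix $\bigl(\int_{\mathbbm{P}_{\beta}^{1}}{\bf\Omega}_{\alpha}\bigr)_{\alpha,\beta\in\Delta\setminus I} = \bigl(\langle\varpi_{\alpha},\beta^{\vee}\rangle\bigr) = (\delta_{\alpha\beta})$, so the $[{\bf\Omega}_{\alpha}]$ are $\mathbbm{Z}$-linearly independent in $H^{2}(X_{P},\mathbbm{Z})$, with dual basis $\{[\mathbbm{P}_{\beta}^{1}]\}_{\beta\in\Delta\setminus I} \subset H_{2}(X_{P},\mathbbm{Z})$. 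The main conceptual input here is the pair of Schubert-theoretic facts (coherent cohomology vanishing and the even-dimensional cell decomposition), which are not proven in the excerpt but are standard in flag variety theory (e.g.~\cite{Flagvarieties},~\cite{Brion}); the remainder is a bookkeeping step combining the earlier identification of ${\rm Pic}(X_{P})$ with a character lattice and the intersection formula of Lemma~\ref{funddynkinline}.
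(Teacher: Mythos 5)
Your argument is correct. Note that the paper itself does not prove this proposition: it is imported by citation from \cite{correa2023deformed}, with the surrounding text only indicating that it follows from Lemma \ref{funddynkinline} and Theorem \ref{AZADBISWAS} together with the identification ${\rm{Hom}}(T(\Delta\backslash I)^{\mathbbm{C}},\mathbbm{C}^{\times})\cong{\rm{Pic}}(X_{P})$ of Eq. (\ref{generatorspicard}). Your proof assembles exactly these ingredients and supplies the two standard topological inputs the citation leaves implicit --- the vanishing $H^{q}(X_{P},\mathscr{O}_{X_{P}})=0$ for $q>0$ fed into the exponential sequence, and the even-dimensional Bruhat cell structure --- and then correctly extracts the free basis from the duality $\int_{\mathbbm{P}^{1}_{\beta}}{\bf{\Omega}}_{\alpha}=\langle\varpi_{\alpha},\beta^{\vee}\rangle=\delta_{\alpha\beta}$ for $\alpha,\beta\in\Delta\backslash I$ (legitimate, since $\Delta\backslash I\subset\Phi_{I}^{+}$). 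One small point of rigor: the assertion that the even-cell decomposition alone forces $H^{2k}(X_{P},\mathbbm{C})=H^{k,k}(X_{P},\mathbbm{C})$ really rests on the Schubert classes being algebraic cycle classes of type $(k,k)$, not merely on the vanishing of odd cohomology; but for the only degree you need, $k=1$, the collapse already follows from $H^{0,2}=H^{2}(X_{P},\mathscr{O}_{X_{P}})=0$, which you have in hand, so the gap is cosmetic.
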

From the above ideas we make the following remarks.
\begin{remark}[Harmonic 2-forms on $X_{P}$]Given any $G$-invariant Riemannian metric $g$ on $X_{P}$, let us denote by $\mathscr{H}^{2}(X_{P},g)$ the space of real harmonic 2-forms on $X_{P}$ with respect to $g$, and let us denote by $\mathscr{I}_{G}^{1,1}(X_{P})$ the space of closed invariant real $(1,1)$-forms. Combining the result of Proposition \ref{C8S8.2Sub8.2.3P8.2.6} with \cite[Lemma 3.1]{MR528871}, we obtain 
\begin{equation}
\mathscr{I}_{G}^{1,1}(X_{P}) = \mathscr{H}^{2}(X_{P},g). 
\end{equation}
Therefore, the closed $G$-invariant real $(1,1)$-forms described in Theorem \ref{AZADBISWAS} are harmonic with respect to any $G$-invariant Riemannian metric on $X_{P}$.
\end{remark}

\begin{remark}[K\"{a}hler cone of $X_{P}$]
\label{Kcone}
It follows from Eq. (\ref{picardeq}) and Theorem \ref{AZADBISWAS} that the K\"{a}hler cone (e.g. \cite{Lazarsfeld}) of a complex flag variety $X_{P}$ is given explicitly by
\begin{equation}
\mathcal{K}(X_{P}) = \displaystyle \bigoplus_{\alpha \in \Delta \backslash I} \mathbbm{R}^{+}[ {\bf{\Omega}}_{\alpha}].
\end{equation}
\end{remark}

\begin{remark}[Mori Cone] It is worth observing that the cone of curves, e.g. \cite{Lazarsfeld}, or Mori cone,
\begin{equation}
{\rm{NE}}(X_{P}) := \Big \{ \sum_{i}a_{i}[C_{i}] \ \Big | \ C_{i} \subset X_{P} \ \text{is irreducible}, \ a_{i} \geq 0 \Big \}, 
\end{equation}
of a flag variety $X_{P}$ is generated by the rational curves $[\mathbbm{P}_{\alpha}^{1}] \in \pi_{2}(X_{P})$, $\alpha \in \Delta \backslash I$, see for instance \cite[\S 18.3]{Timashev} and references therein. In what follows, we shall consider the Mori semigroup 
\begin{equation}
{\rm{NE}}(X_{P})_{\mathbbm{Z}} := \Big \{ \sum_{i}a_{i}[C_{i}] \in {\rm{NE}}(X_{P})  \ \Big | \ a_{i} \in \mathbbm{Z}  \Big \}.
\end{equation}
\end{remark}


The previous result shows that the isomorphisms 
\begin{center}
${\rm{Pic}}(X_{P}) \cong {\rm{Hom}}(P,\mathbbm{C}^{\times}) \cong \bigoplus_{\alpha \in \Delta \backslash I} \mathbbm{Z} \varpi_{\alpha} = \Lambda_{P}$,
\end{center}
can be described as
\begin{enumerate}
\item $ \displaystyle {\bf{L}} \mapsto \vartheta_{{\bf{L}}}: = \prod_{\alpha \in \Delta \backslash I} \vartheta_{\varpi_{\alpha}}^{\langle c_{1}({\bf{L}}),[\mathbbm{P}^{1}_{\alpha}] \rangle} \mapsto \lambda({\bf{L}}) := \sum_{\alpha \in \Delta \backslash I}\langle c_{1}({\bf{L}}),[\mathbbm{P}^{1}_{\alpha}] \rangle\varpi_{\alpha}$,
\item$ \displaystyle \lambda = \sum_{\alpha \in \Delta \backslash I}k_{\alpha}\varpi_{\alpha} \mapsto \vartheta_{\lambda} = \prod_{\alpha \in \Delta \backslash I} \vartheta_{\varpi_{\alpha}}^{k_{\alpha}} \mapsto \bigotimes_{\alpha \in \Delta \backslash I} \mathscr{O}_{\alpha}(1)^{\otimes k_{\alpha}} = {\bf{E}}_{\vartheta_{\lambda}^{-1}}$.
\end{enumerate}
Thus, $\forall {\bf{L}} \in {\rm{Pic}}(X_{P})$, we have an integral weight $\lambda({\bf{L}}) \in \Lambda_{P}$. In general, $\forall \xi \in H^{1,1}(X_{P},\mathbbm{R})$, we can attach $\lambda (\xi) \in \Lambda_{P} \otimes \mathbbm{R}$, such that
\begin{equation}
\label{weightcohomology}
\lambda(\xi) := \sum_{\alpha \in \Delta \backslash I}\langle \xi,[\mathbbm{P}^{1}_{\alpha}] \rangle\varpi_{\alpha}.
\end{equation}
From above, for every holomorphic vector bundle ${\bf{E}} \to X_{P}$, we can define $\lambda({\bf{E}}) \in \Lambda_{P}$, such that 
\begin{equation}
\label{weightholomorphicvec}
\lambda({\bf{E}}) := \sum_{\alpha \in \Delta \backslash I} \langle c_{1}({\bf{E}}),[\mathbbm{P}_{\alpha}^{1}] \rangle \varpi_{\alpha},
\end{equation}
where $c_{1}({\bf{E}}) = c_{1}(\bigwedge^{r}{\bf{E}})$, such that $r = \rank({\bf{E}})$.

\section{Proof of Theorem A}

In order to prove Theorem \ref{theoremA}, we shall stablish some preliminary results. In what follows, we keep the notation introduced in the previous sections.

\begin{lemma}
\label{firsthomovec}
Let ${\bf{E}} \to  G^{\mathbbm{C}}/P$ be a homogeneous vector bundle defined by some finite-dimensional representation $\theta \colon P \to {\rm{GL}}(W)$. Then, we have
\begin{equation}
\label{Chernclassform}
\lambda({\bf{E}}) = -\sum_{\alpha \in \Delta \backslash I}\langle \det \circ \theta ,\alpha^{\vee} \rangle \varpi_{\alpha}.
\end{equation} 
\end{lemma}
\begin{proof}
If $r = \dim(W)$, it follows that 
\begin{equation}
\Lambda^{r}({\bf{E}}) = G^{\mathbbm{C}} \times_{P}(\wedge^{r}W) = \Big \{(U_{i})_{i \in J}, (\det \circ \theta) \circ \psi_{i j} \colon U_{i} \cap U_{j} \to \mathbbm{C}^{\times} \Big \}.
\end{equation}
From Eq. (\ref{cocycle}) we see that the line bundle on the right-hand side above is defined by the character 
\begin{equation}
\label{determinantbundle}
\chi = (\det \circ \theta)^{-1} \in {\text{Hom}}(P,\mathbbm{C}^{\times}).
\end{equation}
From Eq. (\ref{characterparabolic}), we obtain that $\Lambda^{r}({\bf{E}}) = {\bf{E}}_{\chi^{-1}}$, thus
\begin{equation}
\lambda({\bf{E}}) = \sum_{\alpha \in \Delta \backslash I}\langle \chi ,\alpha^{\vee} \rangle \varpi_{\alpha} = -\sum_{\alpha \in \Delta \backslash I}\langle \det \circ \theta ,\alpha^{\vee} \rangle \varpi_{\alpha},
\end{equation}
which concludes the proof.
\end{proof}

\begin{example}[Canonical Bundle]
Let $X_{P} = G^{\mathbbm{C}}/P$ be a complex flag variety defined by some parabolic Lie subgroup $P = P_{I} \subset G^{\mathbbm{C}}$. Considering the identification $T_{{\rm{o}}}^{1,0}X_{P}  \cong \mathfrak{u}_{P}^{-}\subset \mathfrak{g}^{\mathbbm{C}}$, such that ${\rm{o}} = eP \in X_{P}$ and 
\begin{center}
$\mathfrak{u}_{P}^{-} := \displaystyle \sum_{\alpha \in \Phi_{I}^{-}} \mathfrak{g}_{\alpha}$,
\end{center}
 see Eq. (\ref{isoholomorphictgbundle}), we can realize $T^{1,0}X_{P}$ as being a holomoprphic vector bundle, associated with the holomorphic principal $P$-bundle $P \hookrightarrow G^{\mathbbm{C}} \to X$, such that 

\begin{center}

$T^{1,0}X_{P} = \Big \{(U_{i})_{i \in J}, \underline{{\rm{Ad}}}\circ \psi_{i j} \colon U_{i} \cap U_{j} \to {\rm{GL}}(\mathfrak{u}_{P}^{-}) \Big \}$,

\end{center}
where $\underline{{\rm{Ad}}} \colon P \to {\rm{GL}}(\mathfrak{u}_{P}^{-})$ is the isotropy representation. From this, we obtain 
\begin{equation}
\label{canonicalbundleflag}
{\bf{K}}_{X}^{-1} = \Lambda^{\text{top}} \big(T^{1,0}X_{P} \big) = \Big \{(U_{i})_{i \in J}, \det (\underline{{\rm{Ad}}}\circ \psi_{i j}) \colon U_{i} \cap U_{j} \to \mathbbm{C}^{\times} \Big \}.
\end{equation}
Since the character $\det \circ \underline{{\rm{Ad}}} \in {\text{Hom}}(P,\mathbbm{C}^{\times})$ is completely determined by its restriction to the torus $Z(L_{P})^{0} = \exp(\mathfrak{c}_{P}) \subset P$, see Eq. (\ref{commutatordec}), observing that 
\begin{equation}
\det \underline{{\rm{Ad}}}(\exp({\bf{t}})) = {\rm{e}}^{{\rm{tr}}({\rm{ad}}({\bf{t}})|_{\mathfrak{u}_{P}^{-}})} = {\rm{e}}^{- \langle \delta_{P},{\bf{t}}\rangle },
\end{equation}
$\forall {\bf{t}} \in \mathfrak{c}_{P}$, such that $\delta_{P} = \sum_{\alpha \in \Phi_{I}^{+} } \alpha$. Now we notice that
\begin{equation}
\langle \delta_{P},\alpha^{\vee} \rangle = 0, \ \ \ \forall \alpha \in I.
\end{equation}
In fact, considering the representation ${\rm{ad}}(\cdot)|_{\mathfrak{u}_{P}^{+}} \colon \mathfrak{l}_{P} \to \mathfrak{gl}(\mathfrak{u}_{P}^{+})$, for every $\alpha \in I$, we have
\begin{equation}
\langle \delta_{P},\alpha^{\vee} \rangle = \delta_{P}(h_{\alpha}) = {\rm{Tr}}({\rm{ad}}(h_{\alpha})|_{\mathfrak{u}^{+}_{P}}) = 0,
\end{equation}
since $h_{\alpha}  \in \mathfrak{s}_{P} = [\mathfrak{l}_{P},\mathfrak{l}_{P}]$, see Eq. (\ref{generatorvee}) and Eq. (\ref{paringkilling}). Hence, denoting $\vartheta_{\delta_{P}}^{-1} = \det \circ \underline{{\rm{Ad}}}$, it follows that 
\begin{center}
${\bf{K}}_{X_{P}}^{-1} = {\bf{E}}_{\vartheta_{\delta_{P}}^{-1}}$. 
\end{center}
From this, we conclude that 
\begin{equation} 
\label{charactercanonical}
\lambda(T^{1,0}X_{P}) = -\sum_{\alpha \in \Delta \backslash I} \langle \det \circ \underline{{\rm{Ad}}}, \alpha^{\vee} \rangle \varpi_{\alpha} = \sum_{\alpha \in \Delta \backslash I} \langle \vartheta_{\delta_{P}}, \alpha^{\vee} \rangle \varpi_{\alpha} = \delta_{P}
\end{equation}
If we consider the invariant K\"{a}hler metric $\rho_{0} \in \Omega^{1,1}(X_{P})^{G}$ defined by
\begin{equation}
\label{riccinorm}
\rho_{0} = \sum_{\alpha \in \Delta \backslash I}2 \pi \langle \delta_{P}, \alpha^{\vee} \rangle {\bf{\Omega}}_{\alpha} \myeq \sum_{\alpha \in \Delta \backslash I} \langle \delta_{P}, \alpha^{\vee} \rangle \sqrt{-1}\partial \overline{\partial}\log \Big ( \big | \big | \Pi_{\varpi_{\alpha}}({\bf{s}}_{U})v_{\varpi_{\alpha}}^{+}\big | \big |^{2} \Big),
\end{equation}
for some local section ${\bf{s}}_{U} \colon U \subset X_{P}\to G^{\mathbbm{C}}$, it follows that
\begin{equation}
\label{ChernFlag}
c_{1}(X_{P}) = \Big [ \frac{\rho_{0}}{2\pi}\Big].
\end{equation}
By the uniqueness of $G$-invariant representative of $c_{1}(X_{P})$, we have ${\rm{Ric}}(\rho_{0}) = \rho_{0}$, in other words, $\rho_{0} \in \Omega^{1,1}(X_{P})^{G}$ defines a $G$-ivariant K\"{a}hler-Einstein metric on $X_{P}$ (cf. \cite{MATSUSHIMA}). 
\end{example}

Consider now the following lemma.
\begin{lemma}
Let $P = P_{I} \subset G^{\mathbbm{C}}$ be a parabolic Lie subgroup. Then, we have 
\begin{equation}
\mathfrak{c}_{P}^{\ast} = \Big \{ \sum_{\alpha \in \Delta \backslash I}c_{\alpha}(\varpi_{\alpha}|_{\mathfrak{c}_{P}}) \ \Big | \ c_{\alpha} \in \mathbbm{C}, \ \forall \alpha \in \Delta \backslash I\Big \}.
\end{equation}
\end{lemma}
\begin{proof}
Let us verify that $\varpi_{\alpha}|_{\mathfrak{c}_{P}}$, $\alpha \in \Delta \backslash I$, defines a basis for $\mathfrak{c}_{P}^{\ast}$. Suppose that 
\begin{equation}
\sum_{\alpha \in \Delta \backslash I}c_{\alpha}(\varpi_{\alpha}|_{\mathfrak{c}_{P}}) = 0.
\end{equation}
Since $\mathfrak{c}_{P} = \bigcap_{\alpha \in I} \ker(\alpha)$, we obtain 
\begin{equation}
\sum_{\alpha \in \Delta \backslash I}c_{\alpha}\varpi_{\alpha} \in \mathfrak{c}_{P}^{0} = \langle I \rangle_{\mathbbm{C}} \iff \sum_{\alpha \in \Delta \backslash I}c_{\alpha}\varpi_{\alpha} = \sum_{\alpha \in I}a_{\alpha} \alpha,
\end{equation}
for some $a_{\alpha} \in \mathbbm{C}$, $\alpha \in \Delta \backslash I$. From this, it follows that 
\begin{equation}
\begin{split}
\sum_{\alpha \in \Delta \backslash I}c_{\alpha}\varpi_{\alpha} &= \sum_{\alpha \in I}a_{\alpha} \Big (\sum_{\beta \in \Delta}C_{\alpha \beta}\varpi_{\beta} \Big ) = \sum_{\beta \in \Delta}\Big ( \sum_{\alpha \in I}a_{\alpha}C_{\alpha \beta} \Big )\varpi_{\beta} \\
&= \sum_{\beta \in \Delta \backslash I}\Big ( \sum_{\alpha \in I}a_{\alpha}C_{\alpha \beta} \Big )\varpi_{\beta} + \sum_{\beta \in I}\Big ( \sum_{\alpha \in I}a_{\alpha}C_{\alpha \beta} \Big )\varpi_{\beta}.
\end{split}
\end{equation}
Observing that $A_{I} = (C_{\alpha \beta})_{\alpha,\beta \in I}$ is the Cartan matrix of the semisimples Lie algebra $\mathfrak{s}_{P} = [\mathfrak{l}_{P},\mathfrak{l}_{P}]$, we conclude that the unique solution of the equation
\begin{equation}
\sum_{\beta \in I}\Big ( \sum_{\alpha \in I}a_{\alpha}C_{\alpha \beta} \Big )\varpi_{\beta} = 0,
\end{equation}
is the trivial solution $a_{\alpha} = 0$, for every $\alpha \in I$. Hence, we conclude that $\sum_{\alpha \in \Delta \backslash I}c_{\alpha}\varpi_{\alpha} = 0$. Since the set $\{\varpi_{\alpha}\}_{\alpha \in \Delta \backslash I}$ is an algebraically independent set, we conclude that $c_{\alpha} = 0$, for every $\alpha \in \Delta \backslash I$. From this, we conclude that $\{\varpi_{\alpha}|_{\mathfrak{c}_{P}}\}_{\alpha \in \Delta \backslash I}$ is an algebraically independent set in $\mathfrak{c}_{P}^{\ast}$. Since $\dim_{\mathbbm{C}}(\mathfrak{c}_{P}^{\ast}) = |\Delta \backslash I|$, we conclude the proof.
\end{proof}

\begin{lemma}
\label{restrictionweightbundle}
Let ${\bf{E}}_{\theta}\to  X_{P}$ be a homogeneous vector bundle defined by some finite-dimensional irreducible representation $\theta \colon P \to {\rm{GL}}(W(\lambda))$. Then, we have
\begin{equation}
\label{weightvector}
\lambda({\bf{E}}_{\theta})|_{\mathfrak{c}_{P}} = - \dim(W(\lambda)) \lambda|_{\mathfrak{c}_{P}}
\end{equation}
\end{lemma}
\begin{proof}
Given a finite dimensional irreducible representation $\theta \colon P \to {\rm{GL}}(W(\lambda))$, we have ${\bf{E}}_{\theta} = G^{\mathbbm{C}} \times_{P} W(\lambda)$. Considering $r = \dim_{\mathbbm{C}}(W(\lambda))$, we obtain
\begin{equation}
\wedge^{r}({\bf{E}}_{\theta}) = G^{\mathbbm{C}} \times_{P} \wedge^{r}W(\lambda).
\end{equation}
From above, it follows that 
\begin{equation}
\wedge^{r}({\bf{E}}_{\theta}) = \Big \{(U_{i})_{i \in J}, \det \circ (\theta \circ \psi_{i j}) \colon U_{i} \cap U_{j} \to \mathbbm{C}^{\times} \Big \}.
\end{equation}
Observing that $\det \circ \theta$ is completely determined on $Z(L_{P})^{0} = \exp(\mathfrak{c}_{P})$, it follows that
\begin{equation}
\det(\theta(\exp({\bf{t}}))) = \exp({\rm{Tr}}(({\rm{d}}\theta)_{e}({\bf{t}}))),
\end{equation}
for every $ {\bf{t}} \in \mathfrak{c}_{P}$. Since 
\begin{equation}
({\rm{d}}\theta)_{e}({\bf{t}}) = \lambda({\bf{t}}){\rm{Id}}_{W(\lambda)}, \ \ \ \forall {\bf{t}} \in \mathfrak{c}_{P},
\end{equation}
it follows that  
\begin{equation}
r \lambda |_{\mathfrak{c}_{P}} = {\rm{d}}(\det \circ \theta_{s})_{e}|_{\mathfrak{c}_{P}}  = \sum_{\alpha \in \Delta \backslash I}\langle \det \circ \theta,\alpha^{\vee} \rangle (\varpi_{\alpha}|_{\mathfrak{c}_{P}}) = - \lambda({\bf{E}})|_{\mathfrak{c}_{P}}.
\end{equation}
\end{proof}

\begin{lemma}
\label{firstcherclassformula}
Let ${\bf{E}}\to  X_{P}$ be a homogeneous vector bundle defined by some finite-dimensional irreducible representation $\theta \colon P \to {\rm{GL}}(W(\lambda))$. Then, considering $\lambda = \lambda_{s} + \lambda_{c}$, such that $\lambda_{s} \in \Lambda_{\mathfrak{s}_{P}}$ and $\lambda_{c} \in \Lambda_{P}$, we have
\begin{equation}
\lambda({\bf{E}}) = \sum_{\beta \in \Delta \backslash I} \Big ( \sum_{\alpha \in I}a_{\alpha}({\bf{E}})\langle \alpha,\beta^{\vee} \rangle\Big )\varpi_{\beta} - \dim(W(\lambda)) \lambda_{c},
\end{equation}
such that
\begin{equation}
a_{\alpha}({\bf{E}}) := \dim(W(\lambda))\frac{\det(C_{I}(\lambda_{s},\alpha))}{\det(C_{I})}, \ \ \ \alpha \in I,
\end{equation}
where $C_{I} = (\langle \alpha,\beta^{\vee} \rangle)_{\alpha,\beta \in I}$ and $C_{I}(\lambda_{s},\alpha)$, $\alpha \in I$, is the matrix obtained from the Cartan matrix $C_{I}$ replacing its $\alpha$-row by the row vector $(\langle \lambda_{s},\beta^{\vee} \rangle)_{\beta \in I}$.
\end{lemma}

\begin{proof}
Given a finite dimensional irreducible representation $\theta \colon P \to {\rm{GL}}(W(\lambda))$, such that $r = \dim(W(\lambda))$, from the previous lemma, we have $r\lambda + \lambda({\bf{E}}_{\theta}) \in \mathfrak{c}_{P}^{0} = \langle I \rangle_{\mathbbm{C}}$. Thus, we have 
\begin{equation}
r\lambda + \lambda({\bf{E}}_{\theta}) = \sum_{\alpha \in I}a_{\alpha}({\bf{E}}_{\theta}) \alpha = \sum_{\beta \in \Delta} \Big ( \sum_{\alpha \in I}a_{\alpha}({\bf{E}}_{\theta})C_{\alpha \beta}\Big )\varpi_{\beta}.
\end{equation}
From above we obtain the following equations on the variables $(a_{\alpha}({\bf{E}}_{\theta}))_{\alpha \in I}$:
\begin{enumerate}
\item $\sum_{\beta \in I} \langle r\lambda,\beta^{\vee} \rangle \varpi_{\beta} = \sum_{\beta \in I} \Big ( \sum_{\alpha \in I}a_{\alpha}({\bf{E}}_{\theta})C_{\alpha \beta}\Big )\varpi_{\beta}$,
\item $\sum_{\beta \in \Delta \backslash I} \langle r\lambda,\beta^{\vee} \rangle \varpi_{\beta} + \lambda({\bf{E}}_{\theta}) = \sum_{\beta \in \Delta \backslash I} \Big ( \sum_{\alpha \in I}a_{\alpha}({\bf{E}}_{\theta})C_{\alpha \beta}\Big )\varpi_{\beta}$.
\end{enumerate}
Once we know the weight $\lambda$, the linear system (1) determines the coefficients $a_{\alpha}({\bf{E}}_{\theta})$ in terms of $\langle r\lambda,\beta^{\vee} \rangle$, $\beta \in I$, and the Cartan matrix $C_{I} = (C_{\alpha \beta})_{\alpha,\beta \in I}$, $C_{\alpha \beta} = \langle \alpha,\beta^{\vee} \rangle, $of the semissimple algebra $\mathfrak{s}_{P}$. In order to describe explicitly the solution, we denote $\Delta = \{\alpha_{1},\ldots,\alpha_{n}\}$ and consider $P = P_{I} \subset G^{\mathbbm{C}}$, such that $I = \{\alpha_{i_{1}},\ldots,\alpha_{i_{k}}\}$. In this case, the linear system can be written as 
\begin{equation}
\underbrace{\begin{pmatrix}  C_{\alpha_{i_{1}} \alpha_{i_{1}}} & \cdots & C_{\alpha_{i_{k}}\alpha_{i_{1}}} \\ 
 \vdots & \ddots & \vdots \\
 C_{\alpha_{i_{1}} \alpha_{i_{k}}} & \cdots & C_{\alpha_{i_{k}}\alpha_{i_{k}}} 
 \end{pmatrix}}_{C_{I}^{T}} \begin{pmatrix} a_{\alpha_{i_{1}}}({\bf{E}}_{\theta}) \\ \vdots \\ a_{\alpha_{i_{k}}}({\bf{E}}_{\theta})\end{pmatrix} = \begin{pmatrix} \langle r \lambda_{s},\alpha_{i_{1}}^{\vee} \rangle \\ \vdots \\ \langle r \lambda_{s},\alpha_{i_{1}}^{\vee} \rangle \end{pmatrix}.
\end{equation}
Therefore, from Cramer's rule, we conclude that 
\begin{equation}
a_{\alpha}({\bf{E}}_{\theta}) = \dim(W(\lambda))\frac{\det(C_{I}(\lambda_{s},\alpha))}{\det(C_{I})}, \ \ \ \alpha \in I,
\end{equation}
where $C_{I} = (C_{\alpha \beta})_{\alpha,\beta \in I}$ and $C_{I}(\lambda_{s},\alpha)$ is the matrix obtained from $C_{I}$ replacing its $\alpha$-line by the line vector $(\langle \lambda_{s},\beta^{\vee} \rangle)_{\beta \in I}$.
\end{proof}

\begin{remark}
\label{matricessystemchern}
In the setting of the proof of the last theorem, we shall denote 
\begin{equation}
{\bf{a}}({\bf{E}}) = \begin{pmatrix} a_{\alpha_{i_{1}}}({\bf{E}}) \\ \vdots \\ a_{\alpha_{i_{k}}}({\bf{E}})\end{pmatrix} \ \ \ \ \text{and} \ \ \ \ {\bf{b}}_{I}({\bf{E}}) = \begin{pmatrix} \langle r \lambda_{s},\alpha_{i_{1}}^{\vee} \rangle \\ \vdots \\ \langle r \lambda_{s},\alpha_{i_{k}}^{\vee} \rangle \end{pmatrix}.
\end{equation}
Hence, the linear system involved in the proof can be written as $C_{I}^{T}{\bf{a}}({\bf{E}}) = {\bf{b}}_{I}({\bf{E}})$.
\end{remark}

Let us provide some detailed examples which illustrate the consistence of the formula described in Eq. (\ref{Chernclassform}).

\begin{example}[{\bf{Universal Bundle}}]
\label{Klein_universal}
Given $G^{\mathbbm{C}} = {\rm{SL}}_{4}(\mathbb{C})$, fixing the Cartan subalgebra $\mathfrak{h} \subset \mathfrak{sl}_{4}(\mathbbm{C})$ provided by the diagonal matrices, we consider the simple root system 
\begin{equation}
\Delta = \{\alpha_{i} = \epsilon_{i} - \epsilon_{i+1} \ | \ 1 \leq i \leq 3\}, 
\end{equation}
such that $\epsilon_{i}({\rm{diag}}(x_{1},\ldots,x_{4})) = x_{i}$, for every $i = 1,\ldots,4$. 
\begin{equation}
{\dynkin[labels={\alpha_{1},\alpha_{2},\alpha_{3}},scale=3]A{ooo}} 
\end{equation}
The associated Cartan matrix $C = (C_{ij}) $ of $\mathfrak{sl}_{4}(\mathbbm{C})$ is given by
\begin{equation}
C = \begin{pmatrix} 2 & -1 & 0 \\
-1 & 2 & -1 \\
0 & -1 & 2\end{pmatrix}, \ \ C_{ij} = \langle \alpha_{i}, \alpha_{j}^{\vee} \rangle, \ \ i,j = 1,2,3,
\end{equation}
By considering $I = \{\alpha_{1}\alpha_{3}\} \subset \Delta$, that is 
\begin{equation}
{\dynkin[labels={\alpha_{1},\alpha_{2},\alpha_{3}},scale=3]A{o*o}} 
\end{equation}
we have the flag variety (Klein quadric)
\begin{equation}
X = {\rm{SL}}_{4}(\mathbbm{C})/P_{I} = {\rm{Gr}}_{2}(\mathbbm{C}^{4}).
\end{equation}
In this case the Levi component of $P = P_{I}$ is given by 
\begin{equation}
L_{P} = \bigg \{ \begin{pmatrix} A & 0 \\
0 & D\end{pmatrix} \ \bigg | \ A,D \in {\rm{GL}}_{2}(\mathbbm{C}) \bigg \} \cap {\rm{SL}}_{4}(\mathbbm{C}).
\end{equation}
Thus, we have the Cartan matrix $C_{I}$ for $\mathfrak{s}_{P} = \mathfrak{sl}_2 \oplus \mathfrak{sl}_2$ is given by
\begin{equation}
C_I = \begin{pmatrix} 
\langle \alpha_1, \alpha_1^\vee \rangle & \langle \alpha_3, \alpha_1^\vee \rangle \\ 
\langle \alpha_1, \alpha_3^\vee \rangle & \langle \alpha_3, \alpha_3^\vee \rangle 
\end{pmatrix} = \begin{pmatrix} 2 & 0 \\ 0 & 2 \end{pmatrix}.
\end{equation}
Note that $C_I^T = C_I$ due to the symmetry of type $A_n$ Lie algebra.

From above, considering the irreducible representation $\theta\colon L_{P} \to {\rm{GL}}(\mathbbm{C}^{2})$, such that 
\begin{equation}
\theta\begin{pmatrix} A & 0 \\
0 & D\end{pmatrix} := A, \ \ \ \forall \begin{pmatrix} A & 0 \\
0 & D\end{pmatrix} \in L_{P},
\end{equation}
we obtain an irreducible representation $\theta \colon P  \to {\rm{GL}}(\mathbbm{C}^{2})$ which extends trivially $\theta$. This irreducible representation above corresponds to the weight $\varpi_{\alpha_{1}}$, that is, $W(\varpi_{\alpha_{1}}) = \mathbbm{C}^{2}$. Now we can form the homogeneous vector bundle
\begin{equation}
\mathcal{S}:= {\rm{SL}}_{4}(\mathbbm{C})\times_{P}W(\varpi_{\alpha_{1}}) = {\rm{SL}}_{4}(\mathbbm{C})\times_{P}\mathbbm{C}^{2}.
\end{equation}
From Lemma \ref{firstcherclassformula}, we have
\begin{equation}
\lambda(\mathcal{S}) = \big( \sum_{\alpha \in I} a_{\alpha}(\mathcal{S}) C_{\alpha, \alpha_2} \big) \varpi_{\alpha_{2}} - 2\lambda_c.
\end{equation}
Since in this case $\lambda = \lambda_{s} + \lambda_{c}$, with $\lambda_{s} = \varpi_{\alpha_{1}}$ and $\lambda_{c} = 0$, we conclude that 
\begin{equation}
\lambda(\mathcal{S}) = \big( \sum_{\alpha \in I} a_{\alpha}(\mathcal{S}) C_{\alpha, \alpha_2} \big) \varpi_{\alpha_{2}}.
\end{equation}
Following the proof of Lemma \ref{firstcherclassformula} and Remark \ref{matricessystemchern}, we solve the linear system $C_I^T {\bf{a}}(\mathcal{S}) = {\bf{b}}_{I}(\mathcal{S})$. Since $r = 2$ and $\lambda_{s} = \varpi_{\alpha_1}$, we have 
\begin{equation}
    \begin{cases}\langle 2\varpi_{\alpha_{1}}, \alpha_{1}^\vee \rangle = 2(1) = 2 \\
     \langle 2\varpi_{\alpha_{1}}, \alpha_{3}^\vee \rangle = 2(0) = 0\end{cases} \Longrightarrow {\bf{b}}_{I}(\mathcal{S}) = \begin{pmatrix} 2 \\ 0\end{pmatrix}
\end{equation}
The system is given
\begin{equation}
\begin{pmatrix} 2 & 0 \\ 0 & 2 \end{pmatrix} \begin{pmatrix} a_{\alpha_{1}}(\mathcal{S}) \\ a_{\alpha_{3}}(\mathcal{S}) \end{pmatrix} = \begin{pmatrix} 2 \\ 0 \end{pmatrix}.
\end{equation}
The solution in this case is $a_{\alpha_{1}}(\mathcal{S}) = 1$ and $a_{\alpha_{2}}(\mathcal{S}) = 0$. Notice that 
\begin{equation}
a_{\alpha_{1}}(\mathcal{S})= \frac{\det \begin{pmatrix} 2 & 0 \\ 0 & 2 \end{pmatrix}}{4} = \frac{4}{4} = 1, \ \ \ \  a_{\alpha_{2}}(\mathcal{S}) = \frac{\det \begin{pmatrix} 2 & 2 \\ 0 & 0 \end{pmatrix}}{4} = 0.
\end{equation}
Observing that $C_{\alpha_1, \alpha_2} = -1$ and $C_{\alpha_3, \alpha_2} = -1$, we conclude that 
\begin{equation}
\lambda(\mathcal{S}) = \left( 1 \cdot (-1) + 0 \cdot (-1) \right) \varpi_2 - 0 = -\varpi_2.
\end{equation}
We observe that the vector bundle $\mathcal{S}$ is the universal bundle which fits in the tautological exact sequence 
\begin{equation}
\begin{tikzcd}
0 \arrow[r] & \mathcal{S} \arrow[r]& \mathcal{O}_{X}^{\oplus 4} \arrow[r] & \mathcal{Q} \arrow[r] & 0.
\end{tikzcd} 
\end{equation}
\end{example}

\begin{example}[{\bf{Spinor Bundle}}]
\label{spinorbunlde}
Let $G^{\mathbb{C}} = {\rm{Spin}}(7,\mathbb{C})$ (of type $B_{3}$) and consider the identification 
\begin{equation}
\mathfrak{spin}(7,\mathbbm{C}) \cong \{X \in {\rm{M}}_{7\times 7}(\mathbbm{C}) \ | \ X^{T}J + JX = 0\}, 
\end{equation}
such that
\begin{equation}
J = \begin{pmatrix} 
0 & 0 & 0 & 0 & 0 & 0 & 1 \\ 
0 & 0 & 0 & 0 & 0 & 1 & 0 \\ 
0 & 0 & 0 & 0 & 1 & 0 & 0 \\ 
0 & 0 & 0 & 1 & 0 & 0 & 0 \\ 
0 & 0 & 1 & 0 & 0 & 0 & 0 \\ 
0 & 1 & 0 & 0 & 0 & 0 & 0 \\ 
1 & 0 & 0 & 0 & 0 & 0 & 0 
\end{pmatrix}.
\end{equation}
Fixing the Cartan subalgebra $\mathfrak{h} \subset\mathfrak{spin}(7,\mathbbm{C})$ defined by the diagonal matrices, under the above identification, we have the simple root system 
\begin{equation}
\Delta = \{\alpha_{1} = \epsilon_{1} - \epsilon_{2}, \alpha_{2} = \epsilon_{2} - \epsilon_{3}, \alpha_{3} = \epsilon_{3}\}
\end{equation}
such that $\epsilon_{i}({\rm{diag}}(x_{1},\ldots,x_{7})) = x_{i}$, for every $i = 1,2,3$, corresponding to the Dynkin diagram
\begin{equation}
{\dynkin[labels={\alpha_{1},\alpha_{2},\alpha_{3}},scale=3]B{ooo}} 
\end{equation}
The associated Cartan matrix $C = (C_{ij}) $ of $\subset\mathfrak{spin}(7,\mathbbm{C})$ is given by 
\begin{equation}
\label{CartanB3}
C = \begin{pmatrix}
 2 & -1 &  0 \\
-1 &  2 & -2 \\
 0 & -1 &  2
\end{pmatrix}, \ \ C_{ij} = \langle \alpha_{i}, \alpha_{j}^{\vee} \rangle, \ \ i,j = 1,2,3.
\end{equation}
By taking $I = \{\alpha_{2}, \alpha_{3}\} \subset \Delta$, that is,
\begin{equation}
{\dynkin[labels={\alpha_{1},\alpha_{2},\alpha_{3}},scale=3]B{*oo}} 
\end{equation}
we define the parabolic subgroup $P=P_{I}$, giving rise to the flag variety ($5$-dimensional quadric)
\begin{equation}
X_{P} = {\rm{Spin}}(7,\mathbb{C})/P_{I} = Q_{5}. 
\end{equation}
In this case, the Levi component $L_{P}$ has a semisimple part $S_{P}$ of type $B_{2}$, that is $\mathfrak{s}_{P} = \mathfrak{spin}(5,\mathbb{C})$.  The Cartan matrix $C_{I} = (\langle \alpha, \beta^{\vee} \rangle)_{\alpha, \beta \in I}$ of $\mathfrak{s}_{P}$ in this case is given by
\begin{equation}
C_{I} = \begin{pmatrix} 2 & -2 \\ -1 & 2 \end{pmatrix}.
\end{equation}
Now let ${\bf{\Sigma}} \to Q_{5}$ be the homogeneous spinor bundle defined by the irreducible representation of $P_{I}$ with highest weight $\lambda = \varpi_{\alpha_{3}} \in \Lambda_{\mathfrak{s}_{P}}$, i.e.,
\begin{equation}
{\bf{\Sigma}} = {\rm{Spin}}(7,\mathbb{C}) \times_{P} W(\varpi_{\alpha_{3}}).
\end{equation}
Here, since ${\rm{Spin}}(5,\mathbbm{C}) \cong {\rm{Sp}}(4,\mathbbm{C})$, considering $L_{P} \cong {\rm{Sp}}(4,\mathbbm{C}) \times \mathbbm{C}^{\times}$, we have $W(\varpi_{\alpha_{3}}) \cong \mathbbm{C}^{4}$ and ${\bf{\Sigma}}$ is defined by the irreducible representation $\theta \colon L_{P} \to {\rm{GL}}(\mathbbm{C}^{4})$, such that 
\begin{equation}
\theta(A,z)v = zAv, \ \ \forall v \in \mathbbm{C}^{4}, \ \ \forall (A,z) \in L_{P} \cong {\rm{Sp}}(4,\mathbbm{C}) \times \mathbbm{C}^{\times}.
\end{equation}
From Lemma \ref{firstcherclassformula}, we have
\begin{equation}
\lambda({\bf{\Sigma}}) = \big( \sum_{\alpha \in I} a_{\alpha}({\bf{\Sigma}}) C_{\alpha, \alpha_{1}} \big) \varpi_{\alpha_{1}} - 2\lambda_c.
\end{equation}
Since in this case $\lambda = \lambda_{s} + \lambda_{c}$, with $\lambda_{s} = \varpi_{\alpha_{3}}$ and $\lambda_{c} = 0$, we conclude that 
\begin{equation}
\lambda({\bf{\Sigma}}) = \big( \sum_{\alpha \in I} a_{\alpha}({\bf{\Sigma}}) C_{\alpha, \alpha_{1}} \big) \varpi_{\alpha_{1}}.
\end{equation}
Following the proof of Lemma \ref{firstcherclassformula} and Remark \ref{matricessystemchern}, we solve the linear system $C_{I}^{T} {\bf{a}}({\bf{\Sigma}}) = {\bf{b}}_{I}({\bf{\Sigma}})$. Since $r=4$ and $\lambda_{s} = \varpi_{\alpha_{3}}$, we have 
\begin{equation}
    \begin{cases}\langle 4\varpi_{\alpha_{3}}, \alpha_{2}^\vee \rangle = 2(0) = 0 \\
     \langle 4\varpi_{\alpha_{3}}, \alpha_{3}^\vee \rangle = 4(1) = 4\end{cases} \Longrightarrow {\bf{b}}_{I}({\bf{\Sigma}}) = \begin{pmatrix} 0 \\ 4\end{pmatrix}.
\end{equation}
The system $C_{I}^{T}{\bf{a}}({\bf{\Sigma}}) = {\bf{b}}_{I}({\bf{\Sigma}})$ becomes
\begin{equation}
\begin{pmatrix} 2 & -1 \\ -2 & 2 \end{pmatrix} \begin{pmatrix} a_{\alpha_{2}}({\bf{\Sigma}}) \\ a_{\alpha_{3}}({\bf{\Sigma}}) \end{pmatrix} = \begin{pmatrix} 0 \\ 4 \end{pmatrix}.
\end{equation}
Thus, we have $a_{\alpha_{2}}({\bf{\Sigma}}) = 2$ and $a_{\alpha_{3}}({\bf{\Sigma}}) = 4$. From this, we conclude that 
\begin{equation}
\begin{split}
\lambda({\bf{\Sigma}}) &= \Big (\sum_{\alpha \in I} a_{\alpha}({\bf{\Sigma}}) \langle \alpha, \alpha_{1}^{\vee} \rangle \Big) \varpi_{\alpha_{1}} = \Big( a_{\alpha_{2}}({\bf{\Sigma}})\langle \alpha_{2}, \alpha_{1}^{\vee} \rangle + a_{\alpha_{3}}({\bf{\Sigma}})\langle \alpha_{3}, \alpha_{1}^{\vee} \rangle \Big) \varpi_{\alpha_{1}}\\
&= \big(2(-1) + 4(0)\big) \varpi_{\alpha_{1}} = -2\varpi_{\alpha_{1}},
\end{split}
\end{equation}
see Eq. (\ref{CartanB3}). Therefore, we conclude that $\lambda({\bf{\Sigma}}) = -2\varpi_{\alpha_{1}}$. In particular, notice that 
\begin{equation}
\wedge^{4}({\bf{\Sigma}}) = \det({\bf{\Sigma}}) = \mathscr{O}_{\alpha_{1}}(-1).
\end{equation}
\end{example}

Now we consider the following result.

\begin{lemma}
\label{integralconditionsplit}
Let ${\bf{E}}\to  X_{P}$ be a homogeneous vector bundle defined by some finite-dimensional representation $\theta \colon P \to {\rm{GL}}(W)$. If 
\begin{equation}
\label{integralconditioncone}
 \frac{1}{{\rm{rank}}({\bf{E}})} \int_{C}c_{1}({\bf{E}}) \in \mathbbm{Z}, \ \ \ \forall [C] \in {\rm{NE}}(X_{P})_{\mathbbm{Z}},
\end{equation}
then there exists ${\bf{L}}_{0} \in {\rm{Pic}}(X_{P})$ and a homogeneous vector bundle ${\bf{E}}_{0}$, such that 
\begin{equation}
\label{splitingprodlemma}
{\bf{E}} \cong {\bf{E}}_{0} \otimes {\bf{L}}_{0} \ \  \ \ \ \text{and} \ \ \ \ \ c_{1}({\bf{E}}_{0}) = 0.
\end{equation}
\end{lemma}

\begin{proof}
Suppose that ${\bf{E}}$ satisfies the condition given in Eq. (\ref{integralconditioncone}). In particular, it follows that
\begin{equation}
\frac{1}{{\rm{rank}}({\bf{E}})} \int_{\mathbbm{P}_{\alpha}^{1}}c_{1}({\bf{E}}) \in \mathbbm{Z},
\end{equation}
for every $\alpha \in \Delta \backslash I$. Denoting $r = {\rm{rank}}({\bf{E}})$ and considering $\Lambda_{P} \cong {\rm{Hom}}(P,\mathbbm{C}^{\times})$, we obtain that 
\begin{equation}
\vartheta := \frac{\lambda({\bf{E}})}{r} = \sum_{\alpha \in \Delta \backslash I} \frac{\langle c_{1}({\bf{E}}),[\mathbbm{P}_{\alpha}^{1}] \rangle}{r} \varpi_{\alpha}\in \Lambda_{P}.
\end{equation}
From this, we set
\begin{equation}
{\bf{L}}_{0} := \Big \{(U_{i})_{i \in J}, \vartheta^{-1} \circ \psi_{i j} \colon U_{i} \cap U_{j} \to \mathbbm{C}^{\times} \Big \} = \bigotimes_{\alpha \in \Delta \backslash I}\mathscr{O}_{\alpha}(1)^{\otimes \langle \vartheta,\alpha^{\vee}\rangle}.
\end{equation}
By construction, we have ${\bf{L}}_{0}^{\otimes r} = \det ({\bf{E}})$, thus $c_{1}({\bf{L}}_{0}) = \frac{1}{r}c_{1}({\bf{E}})$, see Lemma \ref{firsthomovec}. Now we define
\begin{equation}
{\bf{E}}_{0} := {\bf{E}} \otimes {\bf{L}}_{0}^{-1}.
\end{equation}
With a direct computation, it follows that 
\begin{equation}
c_{1}({\bf{E}}_{0}) = c_{1}({\bf{E}}) - rc_{1}({\bf{L}}_{0}) = 0.
\end{equation}
In order to conclude the proof, we observe that ${\bf{E}} \cong ({\bf{E}} \otimes {\bf{L}}_{0}^{-1}) \otimes {\bf{L}}_{0} = {\bf{E}}_{0} \otimes {\bf{L}}_{0}$.
\end{proof}

\begin{example}
\label{tangentgrassman}
By keeping the notation of Example \ref{Klein_universal}, let $X = {\rm{Gr}}_{2}(\mathbbm{C}^{4})$. In this setting, observing that 
\begin{equation}
\Phi^{+} = \big \{\alpha_{1},\alpha_{2},\alpha_{3}, \alpha_{4}:= \alpha_{1} + \alpha_{2}, \alpha_{5}:= \alpha_{2}+\alpha_{3}, \alpha_{6} := \alpha_{1}+\alpha_{2} +\alpha_{3}\big \},
\end{equation}
since $I = \{\alpha_{1},\alpha_{3}\}$, it follows that 
\begin{equation}
\Phi_{I}^{+} = \big \{\alpha_{2}, \alpha_{1} + \alpha_{2}, \alpha_{2}+\alpha_{3}, \alpha_{1}+\alpha_{2} +\alpha_{3}\big \}.
\end{equation}
From above, we obtain
\begin{equation}
\delta_{P} = \sum_{\alpha \in \Phi_{I}^{+}}\alpha = 2\alpha_{1} + 4\alpha_{2} + 2\alpha_{3}.
\end{equation} 
Considering the associated Cartan matrix $C = (C_{ij}) $ of $\mathfrak{sl}_{4}(\mathbbm{C})$, i.e.,
\begin{equation}
C = \begin{pmatrix} 2 & -1 & 0 \\
-1 & 2 & -1 \\
0 & -1 & 2\end{pmatrix}, \ \ C_{ij} = \langle \alpha_{i}, \alpha_{j}^{\vee} \rangle, \ \ i,j = 1,2,3,
\end{equation}
we conclude that $\delta_{P} = \langle \delta_{P},\alpha_{2}^{\vee} \rangle \varpi_{\alpha_{2}} = 4 \varpi_{\alpha_{2}}$. From this, we have
\begin{equation}
\frac{\langle c_{1}(T^{1,0}{\rm{Gr}}_{2}(\mathbbm{C}^{4})), [\mathbbm{P}_{\alpha_{2}}^{1}] \rangle}{4} = \frac{\langle \delta_{P},h_{\alpha_{2}}^{\vee} \rangle}{4} = 1 \in \mathbbm{Z}.
\end{equation}
From Lemma \ref{integralconditionsplit}, it follows that 
\begin{equation}
T^{1,0}{\rm{Gr}}_{2}(\mathbbm{C}^{4}) \cong {\bf{E}}_{0} \otimes {\bf{L}}_{0},
\end{equation}
such that ${\bf{L}}_{0} \in {\rm{Pic}}({\rm{Gr}}_{2}(\mathbbm{C}^{4}))$ and $c_{1}({\bf{E}}_{0}) = 0$.
\end{example}

Let us provide now an example where the splitting criterion of Lemma \ref{integralconditionsplit} fails.

\begin{example}
As in Example \ref{spinorbunlde}, consider the spinor bundle over the $5$-dimensional quadric ${\bf{\Sigma}} \to Q_{5}$, that is,
\begin{equation}
Q_{5} = {\rm{Spin}}(7,\mathbb{C})/P_{I}  \ \ \ \text{and} \ \ \
{\bf{\Sigma}} = {\rm{Spin}}(7,\mathbb{C}) \times_{P} W(\varpi_{\alpha_{3}}).
\end{equation}
As we have seen, in this case, we have $\lambda({\bf{\Sigma}}) = -2 \varpi_{\alpha_{1}}$. Hence, since ${\rm{rank}}({\bf{\Sigma}}) = 4$, we conclude that 
\begin{equation}
\frac{1}{{\rm{rank}}({\bf{\Sigma}})} \int_{\mathbbm{P}_{\alpha_{1}}^{1}}c_{1}({\bf{\Sigma}}) = \frac{1}{4}(-2) = -\frac{1}{2} \notin \mathbbm{Z}.
\end{equation}
Therefore, ${\bf{\Sigma}}$ does not admit a splitting as in Eq. (\ref{splitingprodlemma}). 
\end{example}

Combining the previous results, we arrive in the proof of Theorem \ref{theoremA}.

\begin{proof}[Proof of Theorem \ref{theoremA}]
Denoting $r = {\rm{rank}}({\bf{E}})$, the proof goes as follows.
\begin{itemize}
\item \underline{(A) $\iff (B)$}: Suppose that ${\bf{E}} \cong {\bf{E}}_{0} \otimes {\bf{L}}_{0}$, such that ${\bf{L}}_{0} \in {\rm{Pic}}(X_{P})$ and $c_{1}({\bf{E}}_{0}) = 0$. It follows that 
\begin{equation}
c_{1}({\bf{E}}) = c_{1}({\bf{E}}_{0}) + rc_{1}({\bf{L}}_{0}) = rc_{1}({\bf{L}}_{0}) \iff \frac{c_{1}({\bf{E}})}{r} = c_{1}({\bf{L}}_{0}) \in H^{2}(X_{P},\mathbbm{Z}).
\end{equation}
From above, we conclude that 
\begin{equation}
\frac{1}{r} \int_{\mathbbm{P}_{\alpha}^{1}}c_{1}({\bf{E}}) \in \mathbbm{Z},
\end{equation}
for every $\alpha \in \Delta \backslash I$. Since ${\rm{NE}}(X_{P})_{\mathbbm{Z}} = \bigoplus_{\alpha \in \Delta \backslash I}\mathbbm{Z}_{\geq 0}[\mathbbm{P}_{\alpha}^{1}]$, we obtain that $\int_{C}c_{1}({\bf{E}}) \in \mathbbm{Z}$, $\forall [C] \in {\rm{NE}}(X_{P})_{\mathbbm{Z}}$. The implication $(B) \Rightarrow (A)$ follows from Lemma \ref{integralconditionsplit}.

\item \underline{(B) $\iff (C)$}: By definition of $\lambda({\bf{E}})$ and the fact that $[\mathbbm{P}_{\alpha}^{1}]$, $\alpha \in \Delta \backslash I$, generates ${\rm{NE}}(X_{P})_{\mathbbm{Z}}$, we have 
\begin{equation}
\displaystyle \frac{1}{r} \int_{C}c_{1}({\bf{E}}) \in \mathbbm{Z}, \forall [C] \in {\rm{NE}}(X_{P})_{\mathbbm{Z}} \iff \frac{\lambda({\bf{E}})}{r} \in \Lambda_{P}.
\end{equation}
From Lemma \ref{firstcherclassformula}, we have 
\begin{equation}
\frac{\lambda({\bf{E}})}{r} = \sum_{\beta \in \Delta \backslash I} \Big ( \sum_{\alpha \in I}\frac{\det(C_{I}(\lambda_{s},\alpha))}{\det(C_{I})} \langle \alpha,\beta^{\vee} \rangle \Big )\varpi_{\beta} - \lambda_{c}.
\end{equation}
Since $\lambda_{c} \in \Lambda_{P}$, we conclude that 
\begin{equation}
\frac{\lambda({\bf{E}})}{r} \in \Lambda_{P} \iff  \sum_{\alpha \in I}\frac{\det(C_{I}(\lambda_{s},\alpha))}{\det(C_{I})} \langle \alpha,\beta^{\vee} \rangle \in \mathbbm{Z}, \ \ \forall \beta \in \Delta \backslash I.
\end{equation}
Therefore, we obtain $(B)\iff(C)$, which concludes the proof.
\end{itemize}

\end{proof}

\begin{example}
Considering $X = {\rm{Gr}}_{2}(\mathbbm{C}^{4})$ as in Example \ref{tangentgrassman}, since ${\rm{Gr}}_{2}(\mathbbm{C}^{4})$ is an irreducible compact Hermitian symmetric space, it follows that $T^{1,0}{\rm{Gr}}_{2}(\mathbbm{C}^{4})$ is a homogeneous vector bundle associate to an irreducible representation satisfying the equivalent conditions of Theorem \ref{theoremA}.
\end{example}

Let us provide a detailed example outside of type $A_{n}$.

\begin{example}[{\bf{Symmetric Square of Spinor Bundle}}]
\label{Squarespinorbundle}
Consider the holomorphic vector bundle ${\bf{E}} = S^{2}({\bf{\Sigma}})$ provided by the Symmetric Square of Spinor Bundle ${\bf{\Sigma}} \to Q_{5}$. In this case, observing that 
\begin{equation}
W(2\varpi_{\alpha_{3}}) \subseteq S^{2}(W(\varpi_{\alpha_{3}})) \subset W(\varpi_{\alpha_{3}}) \otimes W(\varpi_{\alpha_{3}}),
\end{equation}
and $\dim(W(2\varpi_{\alpha_{3}})) = \dim S^{2}(W(\varpi_{\alpha_{3}})) = 10$, we conclude that $W(2\varpi_{\alpha_{3}}) = S^{2}(W(\varpi_{\alpha_{3}}))$. Thus, we have 
\begin{equation}
S^{2}({\bf{\Sigma}}) = {\rm{Spin}}(7,\mathbbm{C}) \times_{P}W(2\varpi_{\alpha_{3}}).
\end{equation}
From Lemma \ref{firstcherclassformula}, we have
\begin{equation}
\lambda(S^{2}({\bf{\Sigma}})) = \big( \sum_{\alpha \in I} a_{\alpha}(S^{2}({\bf{\Sigma}})) C_{\alpha, \alpha_{1}} \big) \varpi_{\alpha_{1}} - 2\lambda_c.
\end{equation}
Since in this case $\lambda = \lambda_{s} + \lambda_{c}$, with $\lambda_{s} = 2\varpi_{\alpha_{3}}$ and $\lambda_{c} = 0$, we conclude that 
\begin{equation}
\lambda(S^{2}({\bf{\Sigma}})) = \big( \sum_{\alpha \in I} a_{\alpha}(S^{2}({\bf{\Sigma}})) C_{\alpha, \alpha_{1}} \big) \varpi_{\alpha_{1}}.
\end{equation}
Following the method from Lemma \ref{firstcherclassformula}, we solve the linear system $C_{I}^{T}{\bf{a}}(S^{2}({\bf{\Sigma}})) = {\bf{b}}_{I}(S^{2}({\bf{\Sigma}}))$. Recall from Example \ref{spinorbunlde} that 
\begin{equation}
C_{I} = \begin{pmatrix} 2 & -2 \\ -1 & 2 \end{pmatrix}.
\end{equation}
Here, the vector ${\bf{b}}_{I}(S^{2}({\bf{\Sigma}}))$ is determined by the rank $r=10$ and the highest weight $\lambda_{s} = 2\varpi_{\alpha_{3}}$
\begin{equation}
{\bf{b}}_{I}(S^{2}({\bf{\Sigma}})) = \begin{pmatrix} \langle 10(2\varpi_{\alpha_{3}}), \alpha_{2}^{\vee} \rangle \\ \langle 10(2\varpi_{\alpha_{3}}), \alpha_{3}^{\vee} \rangle \end{pmatrix} = \begin{pmatrix} 0 \\ 20 \end{pmatrix}.
\end{equation}

The system $C_{I}^{T}{\bf{a}}(S^{2}({\bf{\Sigma}})) = {\bf{b}}_{I}(S^{2}({\bf{\Sigma}}))$ becomes:
\begin{equation}
\begin{pmatrix} 2 & -1 \\ -2 & 2 \end{pmatrix} \begin{pmatrix} a_{\alpha_{2}}(S^{2}({\bf{\Sigma}})) \\ a_{\alpha_{3}}(S^{2}({\bf{\Sigma}})) \end{pmatrix} = \begin{pmatrix} 0 \\ 20 \end{pmatrix}.
\end{equation}

Solving this yields $a_{\alpha_{2}}(S^{2}({\bf{\Sigma}})) = 10$ and $a_{\alpha_{3}}(S^{2}({\bf{\Sigma}})) = 20$. Now we evaluate condition (C) of Theorem \ref{theoremA} for the remaining root $\beta = \alpha_{1} \in \Delta \backslash I$. We must verify if the following sum 
\begin{equation}
\frac{1}{r} \sum_{\alpha \in I} a_{\alpha}(S^{2}({\bf{\Sigma}})) \langle \alpha, \alpha_{1}^{\vee} \rangle = \frac{1}{10} \Big( a_{\alpha_{2}}(S^{2}({\bf{\Sigma}}))\langle \alpha_{2}, \alpha_{1}^{\vee} \rangle + a_{\alpha_{3}}(S^{2}({\bf{\Sigma}}))\langle \alpha_{3}, \alpha_{1}^{\vee} \rangle \Big)
\end{equation}
belongs to $\mathbb{Z}$. Since $\langle \alpha_{2}, \alpha_{1}^{\vee} \rangle = -1$ and $\langle \alpha_{3}, \alpha_{1}^{\vee} \rangle = 0$ in the $B_{3}$ root system, we have
\begin{equation}
\frac{1}{10} \Big( 10(-1) + 20(0) \Big) = -1 \in \mathbb{Z}.
\end{equation}
Hence, Theorem \ref{theoremA} guarantees that ${\bf{E}} = S^{2}({\bf{\Sigma}})$ admits a topological splitting 
\begin{center}
$S^{2}({\bf{\Sigma}})\cong {\bf{E}}_{0} \otimes {\bf{L}}_{0}$, 
\end{center}
such that ${\bf{L}}_{0} \in {\rm{Pic}}(Q_{5})$ and $c_{1}({\bf{E}}_{0}) = 0$. 
\end{example}

Our next example illustrates how to characterize completely the irreducible homogeneous vector bundles which admits a splitting of the form ${\bf{E}}_{0} \otimes {\bf{L}}_{0}$, with ${\bf{L}}_{0} \in {\rm{Pic}}(X_{P})$ and $c_{1}({\bf{E}}_{0}) = 0$, just using the underlying Lie-theoretical data.

\begin{example}
\label{examplespin}
Given $G^{\mathbb{C}} = \text{Spin}(8, \mathbb{C})$, we consider the identification
\begin{equation}
\mathfrak{spin}(8, \mathbb{C}) \cong \mathfrak{so}(8, \mathbb{C}) = \{X \in {\rm{M}}_{7\times 7}(\mathbbm{C}) \ | \ X^{T}J + JX = 0\},
\end{equation}
such that
\begin{equation}
J = \begin{pmatrix} 0 & I_{4} \\
I_{4} & 0\end{pmatrix}.
\end{equation}
Fixing the Cartan subalgebra $\mathfrak{h} \subset \mathfrak{spin}(8, \mathbb{C})$ provided by the diagonal matrices in the standard representation, we consider the simple root system 
\begin{equation}
\Delta = \{ \alpha_{1} = \epsilon_{1} - \epsilon_{2}, \alpha_{2} = \epsilon_{2} - \epsilon_{3}, \alpha_{3} = \epsilon_{3} - \epsilon_{4}, \alpha_{4} = \epsilon_{3} + \epsilon_{4} \}, 
\end{equation}
such that $\epsilon_{i}(\text{diag}(x_{1}, \ldots, x_{4}, -x_{4}, \ldots, -x_{1})) = x_{i}$, for every $i = 1, \ldots, 4$. 
\begin{equation}
{\dynkin[labels={\alpha_{1},\alpha_{2},\alpha_{3},\alpha_{4}},scale=3]D{oooo}} 
\end{equation}
The associated Cartan matrix $C = (C_{ij})$ of $\mathfrak{spin}(8, \mathbb{C})$ is given by
\begin{equation}
C = \begin{pmatrix} 
2 & -1 & 0 & 0 \\
-1 & 2 & -1 & -1 \\
0 & -1 & 2 & 0 \\
0 & -1 & 0 & 2
\end{pmatrix}, \ \ C_{ij} = \langle \alpha_{i}, \alpha_{j}^{\vee} \rangle, \ \ i,j = 1,2,3,4.
\end{equation}
In this case, we consider $I = \{\alpha_{1},\alpha_{2}\}$, i.e.,
\begin{equation}
{\dynkin[labels={\alpha_{1},\alpha_{2},\alpha_{3},\alpha_{4}},scale=3]D{oo**}} 
\end{equation}
From above, we have the flag variety 
\begin{equation}
X_{P} = {\rm{Spin}}(8,\mathbbm{C})/P_{I}.
\end{equation}
In this case, we consider $I = \{\alpha_{1}, \alpha_{2}\}$. The Levi component $L_{P}$ has a semisimple part $S_{P}$ of type $A_{2}$, that is $S_{P} \cong {\rm{SL}}_{3}(\mathbbm{C})$. The Cartan matrix $C_{I} = (\langle \alpha, \beta^{\vee} \rangle)_{\alpha, \beta \in I}$ of $\mathfrak{s}_{P}$ is given by
\begin{equation}
C_{I} = \begin{pmatrix} 2 & -1 \\ -1 & 2 \end{pmatrix},
\end{equation}
which implies $\det(C_{I}) = 3$. 

Let us evaluate the algebraic criterion provided by item (C) of Theorem \ref{theoremA} for an arbitrary homogeneous vector bundle 
\begin{center}
${\bf{E}}_{\theta} = {\rm{Spin}}(8,\mathbbm{C}) \times_{P}W(\lambda) \rightarrow X_{P}$ 
\end{center}
defined by some irreducible representation of $\theta \colon P \to {\rm{GL}}(W(\lambda))$, with highest weight $\lambda = \lambda_{s} + \lambda_{c}$, such that $\lambda_{s} \in \Lambda_{\mathfrak{s}_{P}}^{+}$ and $\lambda_{c} \in \Lambda_{P}$. Denoting $\langle \lambda_{s}, \alpha_{1}^{\vee} \rangle = m_{1}$ and $\langle \lambda_{s}, \alpha_{2}^{\vee} \rangle = m_{2}$, and considering the modified matrices $C_{I}(\lambda_{s}, \alpha)$ as in item (C) of Theorem \ref{theoremA}, we obtain
\begin{equation}
\det(C_{I}(\lambda_{s}, \alpha_{1})) = \det \begin{pmatrix} m_{1} & -1 \\ m_{2} & 2 \end{pmatrix} = 2m_{1} + m_{2},
\end{equation}
\begin{equation}
\det(C_{I}(\lambda_{s}, \alpha_{2})) = \det \begin{pmatrix} 2 & m_{1} \\ -1 & m_{2} \end{pmatrix} = 2m_{2} + m_{1}.
\end{equation}
Taking $\beta = \alpha_{3}$, since $\langle \alpha_{1}, \alpha_{3}^{\vee} \rangle = 0$ and $\langle \alpha_{2}, \alpha_{3}^{\vee} \rangle = -1$, we have:
\begin{equation}
\sum_{\alpha \in I} \frac{\det(C_{I}(\lambda_{s}, \alpha))}{\det(C_{I})} \langle \alpha, \alpha_{3}^{\vee} \rangle = \frac{2m_{1} + m_{2}}{3}(0) + \frac{m_{1} + 2m_{2}}{3}(-1) = -\frac{m_{1} + 2m_{2}}{3}.
\end{equation}
By symmetry, the evaluation for $\beta = \alpha_{4}$ yields the exact same value, since $\langle \alpha_{2}, \alpha_{4}^{\vee} \rangle = -1$. Therefore, the vector bundle ${\bf{E}}_{\theta}$ admits a topological splitting if and only if 
\begin{equation}
\frac{m_{1} + 2m_{2}}{3} \in \mathbb{Z}. 
\end{equation}
Let us illustrate this with two examples of vector bundles:
\begin{itemize}
\item\textbf{Failure of Splitting:}

\noindent Consider ${\bf{E}}_{\varrho} \rightarrow X_{P}$ defined by the representation $\varrho\colon P \to {\rm{GL}}(W(\lambda))$, i.e., with highest weight $\lambda = \lambda_{s} + \lambda_{c}$. In this case, supposing that $\lambda_{s}  = \varpi_{\alpha_{1}}$, we have $m_{1} = 1$ and $m_{2} = 0$. Evaluating our algebraic condition, we obtain:
\begin{equation}
-\frac{1 + 2(0)}{3} = -\frac{1}{3} \notin \mathbb{Z}.
\end{equation}
Thus, Theorem \ref{theoremA} ensures that the fundamental bundle ${\bf{E}}_{\varrho}$ does not admit a splitting of the form ${\bf{E}}_{0} \otimes {\bf{L}}_{0}$ with ${\bf{L}}_{0} \in {\rm{Pic}}(X_{P})$ and $c_{1}({\bf{E}}_{0}) = 0$.
\item\textbf{Successful Splitting:}

\noindent Now, let ${\bf{E}}_{\theta} \rightarrow X_{P}$ be the vector bundle associated with the irreducible representation $\theta \colon P \to {\rm{GL}}(W(\lambda))$, which corresponds to the highest weight $\lambda = \lambda_{s} + \lambda_{c}$. Suppose in this case that $\lambda_{c} = \varpi_{\alpha_{1}} + \varpi_{\alpha_{2}}$. From this, we have $m_{1} = 1$ and $m_{2} = 1$. Evaluating the condition for $\beta \in \{\alpha_{3}, \alpha_{4}\}$, we obtain
\begin{equation}
-\frac{1 + 2(1)}{3} = -1 \in \mathbb{Z}.
\end{equation}
Because this value is an integer, Theorem A guarantees that the adjoint bundle admits a global topological splitting ${\bf{E}}_{\theta} \cong {\bf{E}}_{0} \otimes {\bf{L}}_{0}$.
\end{itemize}
\end{example}

\section{Mean curvature of homogeneous Hermitian vector bundles}

Given a K\"{a}hler manifold $(X,\omega)$, in what follows we consider the Hodge $\ast$-operator defined by $\omega$ and the associated codifferential $\delta := - \ast {\rm{d}} \ast $, such that
\begin{equation}
(\eta, \varphi)\frac{\omega^{n}}{n!} = \eta \wedge \ast \bar{\varphi}, 
\end{equation}
for all $\eta,\varphi \in \Omega^{p,q}(X)$. From the decomposition ${\rm{d}}= \partial + \bar{\partial}$, we have $\delta := \partial^{\ast} + \bar{\partial}^{\ast}$, such that 
\begin{equation}
\partial^{\ast} := - \ast \bar{\partial} \ast, \ \ \ \bar{\partial}^{\ast}:= - \ast  \partial \ast.
\end{equation}
Let us consider the Lefschetz operator ${\rm{L}}_{\omega} \colon \Omega^{p,q}(X) \to \Omega^{p+1,q+1}(X)$, such that 
\begin{equation}
{\rm{L}}_{\omega} = \omega\wedge (-), 
\end{equation} 
its dual $\Lambda_{\omega}  \colon \Omega^{p,q}(X) \to \Omega^{p-1,q-1}(X)$, and the Laplacian operators
\begin{equation}
{\bf{\Delta}}_{\omega} = - \big ({\rm{d}} \delta + \delta {\rm{d}} \big), \ \ \square_{\omega}:= - \big (\partial \partial^{\ast} + \partial^{\ast} \partial \big ), \ \ \overline{\square}_{\omega}:= - \big (\bar{\partial} \bar{\partial}^{\ast} + \bar{\partial}^{\ast} \bar{\partial} \big ).
\end{equation}
In this setting, we have the following well-known result (e.g. \cite{wells1980differential}, \cite{MR2093043}, \cite{Griffiths}).
\begin{theorem}[K\"{a}hler identities] 
\label{Kidentities}
Let $(X,\omega)$ be a K\"{a}hler manifold. Then the following identities hold true:
\begin{enumerate}
\item[(K1)] $[\partial,{\rm{L}}_{\omega}] = [\bar{\partial},{\rm{L}}_{\omega}] = 0$ \ and \ $[\partial^{\ast},\Lambda_{\omega}] = [\bar{\partial}^{\ast},\Lambda_{\omega}] = 0$;
\item[(K2)] $[\partial^{\ast},{\rm{L}}_{\omega}] = -\sqrt{-1}\bar{\partial}$ \ and \ $[\bar{\partial}^{\ast},{\rm{L}}_{\omega}] = \sqrt{-1}\partial$;
\item[(K3)] $[\Lambda_{\omega},\partial] = \sqrt{-1}\bar{\partial}^{\ast}$ \ and \ $[\Lambda_{\omega},\bar{\partial}] = - \sqrt{-1}\partial^{\ast}$;
\item[(K4)] ${\bf{\Delta}}_{\omega} = 2 \square_{\omega} = 2 \overline{\square}_{\omega}$ and ${\bf{\Delta}}_{\omega}$ commutes with $\partial, \bar{\partial}, \partial^{\ast}, \bar{\partial}^{\ast}, {\rm{L}}_{\omega},$ and $\Lambda_{\omega}$.
\end{enumerate}
\end{theorem}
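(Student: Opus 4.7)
The plan is to prove (K1)--(K4) in stages, starting from the genuinely trivial identities and bootstrapping the remaining ones via a pointwise reduction to flat space.

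First I would dispatch (K1). Since $\omega$ is K\"ahler, $\partial \omega = \bar{\partial}\omega = 0$, and because $\partial$ and $\bar{\partial}$ are graded derivations on $\Omega^{\ast,\ast}(X)$ one has $\partial(\omega \wedge \eta) = \omega \wedge \partial \eta$ and similarly for $\bar\partial$. This gives $[\partial,{\rm{L}}_{\omega}] = [\bar{\partial},{\rm{L}}_{\omega}] = 0$ directly. The dual statements $[\partial^{\ast},\Lambda_{\omega}] = [\bar{\partial}^{\ast},\Lambda_{\omega}] = 0$ then follow by taking formal adjoints with respect to the pointwise Hermitian inner product, since $({\rm{L}}_{\omega})^{\ast} = \Lambda_{\omega}$ and $[A,B]^{\ast} = [B^{\ast},A^{\ast}]$.

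For (K2) and (K3) I would argue by a local model calculation. All operators involved are linear differential operators of order at most one, and both sides of each identity are tensorial in the zeroth-order part and depend on at most the first derivatives of the metric coefficients. It therefore suffices to check the identities at an arbitrary point $p \in X$. Choosing K\"ahler normal coordinates $(z^{1},\ldots,z^{n})$ at $p$, one has $g_{i \bar{j}}(p) = \delta_{ij}$ and all first derivatives of $g_{i\bar j}$ vanish at $p$, so the verification reduces to the flat model $(\mathbb{C}^{n},\omega_{{\rm{std}}})$ with $\omega_{{\rm{std}}} = \frac{\sqrt{-1}}{2}\sum_{j}{\rm{d}}z^{j}\wedge {\rm{d}}\bar{z}^{j}$. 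On this model I would introduce the exterior multiplication operators $e^{j} = {\rm{d}}z^{j}\wedge$, $\bar{e}^{j} = {\rm{d}}\bar{z}^{j}\wedge$ and their pointwise adjoints $\iota^{j} = (e^{j})^{\ast}$, $\bar{\iota}^{j} = (\bar{e}^{j})^{\ast}$, which satisfy the canonical anticommutation relations $\{\iota^{j},e^{k}\} = \delta^{jk}$, $\{\bar{\iota}^{j},\bar{e}^{k}\} = \delta^{jk}$, with all other anticommutators vanishing. In this formalism $\partial = \sum_{j}e^{j}\partial_{z^{j}}$, $\bar{\partial} = \sum_{j}\bar{e}^{j}\partial_{\bar{z}^{j}}$, while ${\rm{L}}_{\omega} = \sqrt{-1}\sum_{j}e^{j}\bar{e}^{j}$ and $\Lambda_{\omega} = -\sqrt{-1}\sum_{j}\iota^{j}\bar{\iota}^{j}$. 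The identity $[\Lambda_{\omega},\bar{\partial}] = -\sqrt{-1}\partial^{\ast}$ then reduces to a direct computation of Clifford-type anticommutators, after which $[\Lambda_{\omega},\partial] = \sqrt{-1}\bar{\partial}^{\ast}$ follows by complex conjugation, and (K2) follows from (K3) by taking adjoints.

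Finally, for (K4) I would use (K3) to collapse cross terms. Writing $\partial^{\ast} = \sqrt{-1}[\Lambda_{\omega},\bar{\partial}]$ and using $\bar{\partial}^{2} = 0$, one computes
\begin{equation*}
\bar{\partial}\partial^{\ast} + \partial^{\ast}\bar{\partial} = \sqrt{-1}\bigl(\bar{\partial}\Lambda_{\omega}\bar{\partial} - \bar{\partial}\Lambda_{\omega}\bar{\partial}\bigr) = 0,
\end{equation*}
and analogously $\partial\bar{\partial}^{\ast} + \bar{\partial}^{\ast}\partial = 0$. Expanding $-{\bf{\Delta}}_{\omega} = ({\rm{d}}+\delta)^{2} = (\partial + \bar{\partial} + \partial^{\ast} + \bar{\partial}^{\ast})^{2}$ and discarding the vanishing mixed terms gives ${\bf{\Delta}}_{\omega} = 2\square_{\omega} = 2\overline{\square}_{\omega}$, where the second equality follows from an analogous manipulation expressing $\square_{\omega}$ and $\overline{\square}_{\omega}$ both in terms of $\Lambda_{\omega}$, $\partial$, $\bar{\partial}$ via (K3). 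The commutativity of ${\bf{\Delta}}_{\omega}$ with ${\rm{L}}_{\omega}, \Lambda_{\omega}, \partial, \bar{\partial}, \partial^{\ast}, \bar{\partial}^{\ast}$ then follows formally by combining (K1)--(K3). The main obstacle in this strategy is the flat-model bookkeeping in Step 2: the sign conventions in the anticommutation relations, and in particular in the formal adjoints $\partial^{\ast}$ and $\bar{\partial}^{\ast}$ under the chosen Hodge $\ast$-operator, must be tracked carefully, but once this is done the K\"ahler hypothesis enters only through the osculation property of normal coordinates, which is where ${\rm{d}}\omega = 0$ is genuinely used.
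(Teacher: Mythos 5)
The paper does not prove this theorem at all --- it is stated as a well-known result with citations to Wells, Huybrechts, and Griffiths--Harris --- and your argument is precisely the standard proof given in those references: (K1) from $\partial\omega=\bar\partial\omega=0$ plus adjoints, (K2)--(K3) by reduction to the flat model via K\"ahler normal coordinates, and (K4) formally from (K3). Your outline is correct; the only blemish is a normalization mismatch in the flat model (with $\omega_{\rm std}=\frac{\sqrt{-1}}{2}\sum_{j}{\rm d}z^{j}\wedge{\rm d}\bar z^{j}$ one has ${\rm L}_{\omega}=\frac{\sqrt{-1}}{2}\sum_{j}e^{j}\bar e^{j}$ and $\{\iota^{j},e^{k}\}=2\delta^{jk}$ rather than the unit-normalized relations you wrote), which is exactly the bookkeeping issue you already flag and does not affect the validity of the approach.
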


Consider now the following result.

\begin{proposition}
\label{eigenvalueatorigin}
Let $X_{P}$ be a flag variety and let $\omega_{0}$ be a $G$-invariant K\"{a}hler metric on $X_{P}$. Then, for every closed $G$-invariant real $(1,1)$-form $\psi$, the eigenvalues of the endomorphism $\omega_{0}^{-1} \circ \psi$ are given by 
\begin{equation}
\label{eigenvalues}
{\bf{q}}_{\beta}(\omega_{0}^{-1} \circ \psi) = \frac{ \langle \phi([\psi]), \beta^{\vee} \rangle}{\langle \phi([\omega_{0}]), \beta^{\vee} \rangle}, \ \ \beta \in \Phi_{I}^{+},
\end{equation}
such that $\phi([\psi]), \phi([\omega_{0}]) \in \Lambda_{P} \otimes \mathbbm{R}$.
\end{proposition}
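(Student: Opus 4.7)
The plan is to reduce the computation to the origin $o = eK \in X_P$ and then exploit the root space decomposition together with the rational curves $\mathbb{P}_\beta^1$ from Lemma~\ref{funddynkinline}. Since both $\omega_0$ and $\psi$ are $G$-invariant, the endomorphism $\omega_0^{-1} \circ \psi$ is $G$-equivariant, so its eigenvalues are constant on $X_P$ and it suffices to diagonalize it on $T_o^{1,0} X_P$.

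First, I would use the identification $T_o^{1,0} X_P = \bigoplus_{\beta \in \Phi_I^+} \mathfrak{g}_{-\beta}$ from Eq.~(\ref{isoholomorphictgbundle}), regarded as a representation of $K = P \cap G$. In particular, the maximal torus $T \subset K$ (with Lie algebra $\sqrt{-1}\mathfrak{h}_\mathbb{R}$) acts on each line $\mathfrak{g}_{-\beta}$ by the weight $-\beta$. Both $\omega_0\vert_o$ and $\psi\vert_o$ are $T$-invariant Hermitian forms on $T_o^{1,0} X_P$, so by Schur's lemma they are diagonal with respect to the weight decomposition: there exist scalars $a_\beta(\omega_0), a_\beta(\psi)$ with
\begin{equation*}
\omega_0(e_{-\alpha},\overline{e_{-\beta}})\vert_o = a_\beta(\omega_0)\,\delta_{\alpha\beta}, \qquad \psi(e_{-\alpha},\overline{e_{-\beta}})\vert_o = a_\beta(\psi)\,\delta_{\alpha\beta},
\end{equation*}
for all $\alpha,\beta \in \Phi_I^+$ (different weight spaces are automatically orthogonal). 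Consequently the endomorphism $\omega_0^{-1}\circ\psi$ is diagonal in the basis $\{e_{-\beta}\}$ with eigenvalues $a_\beta(\psi)/a_\beta(\omega_0)$, and it only remains to evaluate this ratio.

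The key step is to restrict both forms to the rational curve $\mathbb{P}_\beta^1 = \overline{\exp(\mathfrak{g}_{-\beta})\,o} \subset X_P$. This curve is a homogeneous $SU(2)$-orbit under $\exp(\mathfrak{su}_2(\beta))$, and its tangent space at $o$ is exactly $\mathfrak{g}_{-\beta}$. The pullbacks $\omega_0\vert_{\mathbb{P}_\beta^1}$ and $\psi\vert_{\mathbb{P}_\beta^1}$ are $SU(2)$-invariant real $(1,1)$-forms on $\mathbb{P}_\beta^1 \cong \mathbb{P}^1$; since the space of such forms is one-dimensional, one form is a scalar multiple of the other:
\begin{equation*}
\psi\vert_{\mathbb{P}_\beta^1} = \lambda_\beta\,\omega_0\vert_{\mathbb{P}_\beta^1}.
\end{equation*}
Evaluating at $o$ on the generator $e_{-\beta}$ shows $\lambda_\beta = a_\beta(\psi)/a_\beta(\omega_0)$, while integrating gives $\lambda_\beta = \int_{\mathbb{P}_\beta^1}\psi \,\big/\, \int_{\mathbb{P}_\beta^1}\omega_0$. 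Finally, writing $[\omega_0] = \sum_{\alpha\in\Delta\setminus I}\langle\phi([\omega_0]),\alpha^\vee\rangle[{\bf{\Omega}}_\alpha]$ and similarly for $[\psi]$ (cf.\ Proposition~\ref{C8S8.2Sub8.2.3P8.2.6} and Eq.~(\ref{weightcohomology})), Lemma~\ref{funddynkinline} yields $\int_{\mathbb{P}_\beta^1}\omega_0 = \langle \phi([\omega_0]),\beta^\vee\rangle$ and $\int_{\mathbb{P}_\beta^1}\psi = \langle \phi([\psi]),\beta^\vee\rangle$, from which Eq.~(\ref{eigenvalues}) follows.

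The main subtlety I anticipate is the identification of the eigenvalue of the endomorphism $\omega_0^{-1}\circ\psi$ with the scalar $\lambda_\beta$ obtained by restriction to $\mathbb{P}_\beta^1$: one must carefully check that the rescaling factor between the Hermitian forms on the one-dimensional complex subspace $\mathfrak{g}_{-\beta} = T_o^{1,0}\mathbb{P}_\beta^1$ coincides with the diagonal entry of $\omega_0^{-1}\circ\psi$ on that subspace. This follows because the decomposition $T_o^{1,0}X_P = \bigoplus_\beta \mathfrak{g}_{-\beta}$ is orthogonal for both Hermitian forms, so the inversion and composition are computed block-by-block on each line $\mathfrak{g}_{-\beta}$, where both forms are scalars and the ratio is unambiguous. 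Everything else is bookkeeping with the Azad--Biswas description of invariant $(1,1)$-forms and the pairings from Lemma~\ref{funddynkinline}.
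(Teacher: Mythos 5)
Your proof is correct, and its skeleton --- reduce to the origin by $G$-invariance, diagonalize both forms simultaneously via the torus-weight decomposition $T_{\rm{o}}^{1,0}X_{P} = \bigoplus_{\beta \in \Phi_{I}^{+}}\mathfrak{g}_{-\beta}$, and express the answer through the intersection numbers of Lemma \ref{funddynkinline} --- matches the paper's. Where you genuinely diverge is in how the diagonal entries are obtained. The paper writes $\omega_{0}$ and $\psi$ as $\omega_{\varphi}$ via the Azad--Biswas potential (Theorem \ref{AZADBISWAS}), quotes the explicit value $\mathcal{H}_{\varphi}(E_{\beta},E_{\beta}) = \sum_{\alpha \in \Delta \backslash I}\frac{c_{\alpha}}{2}\langle \varpi_{\alpha},\beta^{\vee}\rangle$ from Azad--Biswas, and then converts the coefficients $c_{\alpha}$ into the pairings $\langle[\omega],[\mathbbm{P}_{\alpha}^{1}]\rangle$ using $\omega_{\varphi} = \sum_{\alpha}\pi c_{\alpha}{\bf{\Omega}}_{\alpha}$. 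You instead restrict both forms to the rational curve $\mathbbm{P}_{\beta}^{1}$, use that the space of $\exp(\mathfrak{su}_{2}(\beta))$-invariant real $2$-forms on $\mathbbm{P}^{1}$ is one-dimensional to get $\psi|_{\mathbbm{P}_{\beta}^{1}} = \lambda_{\beta}\,\omega_{0}|_{\mathbbm{P}_{\beta}^{1}}$, and read off $\lambda_{\beta}$ as a ratio of integrals. Your route avoids the explicit potential computation entirely (and makes all normalization constants cancel automatically), at the cost of not producing the individual diagonal values ${\bf{q}}_{\beta}(\omega)$, which the paper reuses later (e.g.\ in Remark \ref{primitivecalc} and in the proof of Theorem A). The one step you should make explicit is that $\exp(\mathfrak{su}_{2}(\beta))$ acts transitively on $\mathbbm{P}_{\beta}^{1} = \overline{\exp(\mathfrak{g}_{-\beta}){\rm{o}}}$ with $T_{\rm{o}}\mathbbm{P}_{\beta}^{1} = \mathfrak{u}_{\beta}$, so that restriction to the curve really does isolate the $\beta$-diagonal entry; this is standard and consistent with the paper's Example \ref{sl2case}, so it is a citation to supply rather than a gap.
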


\begin{remark}
\label{primitivecalc}
In the setting of the last proposition, since $n \psi\wedge \omega_{0}^{n-1} = \Lambda_{\omega_{0}}(\psi)\omega_{0}^{n}$, such that $n=\dim_{\mathbbm{C}}(X_{P})$, and $\Lambda_{\omega_{0}}(\psi)={\rm{tr}}(\omega_{0}^{-1} \circ \psi)$, it follows that
\begin{equation}
\label{contraction}
\Lambda_{\omega_{0}}({\bf{\Omega}}_{\alpha})=\sum_{\beta \in \Phi_{I}^{+}} \frac{\langle \varpi_{\alpha}, \beta^{\vee} \rangle}{\langle \phi([\omega_{0}]), \beta^{\vee}\rangle},
\end{equation}
for every $\alpha \in \Delta \backslash I$. In particular, for every ${\bf{L}} \in {\rm{Pic}}(X_{P})$, we have a Hermitian structure ${\bf{h}}_{0}$ on ${\bf{L}}$, such that the curvature $F({\bf{h}}_{0})$ of the Chern connection $\nabla^{{\bf{h}}_{0}} \myeq {\rm{d}} + \partial \log ({\bf{h}}_{0})$, satisfies 
\begin{equation}
\label{linebundleHYM}
\frac{\sqrt{-1}}{2\pi} \Lambda_{\omega_{0}}(F({\bf{h}}_{0})) = \sum_{\beta \in \Phi_{I}^{+} } \frac{\langle \lambda({\bf{L}}), \beta^{\vee} \rangle}{\langle \phi([\omega_{0}]), \beta^{\vee}\rangle}.
\end{equation}
From this, we have that $\nabla$ is a Hermitian-Yang-Mills (HYM) connection (e.g. \cite{Kobayashi+1987}). 
\end{remark}
Let us recall some terminology. Fixing a K\"{a}hler class $\xi \in \mathcal{K}(X_{P})$, the {\textit{slope}} $\mu_{\xi}({\bf{E}})$ of a holomorphic vector bundle ${\bf{E}} \to X_{P}$, with respect to $\xi$ (or $\xi$-slope for short), is defined by
\begin{equation}
\mu_{\xi}({\bf{E}}):= \frac{1}{\rank({\bf{E}})} \int_{X_{P}}c_{1}({\bf{E}}) \wedge \xi^{n-1}.
\end{equation}
see for instance \cite{Kobayashi+1987}. From above, a holomorphic vector bundle ${\bf{E}} \to X_{P}$ is said to be $\xi${\textit{-(semi)stable}} if 
\begin{equation}
\mu_{\xi}({\bf{E}}) \geoq \mu_{\xi}(\mathcal{F}),
\end{equation}
for every coherent subsheaf $0 \neq \mathcal{F} \varsubsetneq {\bf{E}}$. Further, we say that ${\bf{E}}$ is $\xi${\textit{-polystable}} if it is isomorphic to a direct sum of stable vector bundles of the same slope, and we say that ${\bf{E}}$ is $\xi${\textit{-unstable}} if it is not $\xi$-semistable. Given a Hermitian holomorphic vector bundle $({\bf{E}},{\bf{h}})$ over $X_{P}$, we have locally
\begin{equation}
F({\bf{h}}) = (F({\bf{h}})_{ij}), \ \ {\text{s.t.}} \ \  F({\bf{h}})_{ij} = \sum_{\alpha,\beta} R^{i}_{j\alpha \overline{\beta}}{\rm{d}}z_{\alpha} \wedge {\rm{d}}\overline{z_{\beta}}.
\end{equation}
where $F({\bf{h}})$ is the curvature of the associated Chern connection. From this, fixed some $G$-invariant K\"{a}hler metric $\omega_{0} \in \Omega^{1,1}(X_{P})^{G}$, we can define the $\omega_{0}$-trace of $F({\bf{h}})$ as being the endomorphism 
\begin{equation}
\Lambda_{\omega_{0}}(F({\bf{h}})) = (\Lambda_{\omega_{0}}(F({\bf{h}})_{ij})), \ \ {\text{s.t.}} \ \ \Lambda_{\omega_{0}}(F({\bf{h}})_{ij}) = \sum_{\alpha,\beta}g^{\alpha \overline{\beta}} R^{i}_{j\alpha \overline{\beta}},
\end{equation}
here we denote by $g_{0}^{-1} = (g^{\alpha \overline{\beta}})$ the inverse matrix of the underlying K\"{a}hler metric $g_{0} = \omega_{0}({\rm{1}} \otimes J)$. 
\begin{definition}
We define the mean curvature of $({\bf{E}},{\bf{h}}) \to (X_{P},\omega_{0})$ as being the endomorphism ${\rm{K}}({\bf{E}},{\bf{h}}) \in \mathcal{A}^{0}({\rm{End}}({\bf{E}}))$ given by
\begin{equation}
{\rm{K}}({\bf{E}},{\bf{h}}) := \sqrt{-1}\Lambda_{\omega_{0}}(F({\bf{h}})),
\end{equation}
where $F({\bf{h}})$ is the curvature of the associated Chern connection.
\end{definition}

Given a holomorphic Hermitian vector bundle $({\bf{E}},{\bf{h}})$ over a compact K\"{a}hler manifold $(X,\omega)$, it follows that the associated Chern connection $\nabla \myeq {\rm{d}} + {\bf{A}}$ induces a covariant exterior derivative ${\rm{d}}_{{\bf{A}}} \colon \mathcal{A}^{\bullet}({\bf{E}}) \to  \mathcal{A}^{\bullet + 1}({\bf{E}})$, such that 
\begin{equation}
{\rm{d}}_{{\bf{A}}}(\eta \otimes {\bf{s}}) = {\rm{d}}\eta \otimes {\bf{s}} + (-1)^{\deg(\eta)}\eta \wedge \nabla \bf{s},
\end{equation}
notice that ${\rm{d}}_{{\bf{A}}} = \nabla$ on $\mathcal{A}^{0}({\bf{E}})$. Observing that ${\rm{d}}_{{\bf{A}}} = \partial_{{\bf{A}}} + \overline{\partial}_{{\bf{A}}}$, and considering the extension of first-order Kahler identities to the complex ${\bf{E}}$-valued differential forms, it follows that 
\begin{equation}
\partial_{{\bf{A}}}^{\ast} = \sqrt{-1}\big [ \Lambda_{\omega},\overline{\partial}_{{\bf{A}}}\big ], \ \ \ \ \overline{\partial}_{{\bf{A}}}^{\ast} = -\sqrt{-1}\big [ \Lambda_{\omega},\partial_{{\bf{A}}}\big ],
\end{equation}
e.g. \cite{ballmann2006lectures}. Since $F({\bf{h}})$ is of $(1,1)$-type and ${\rm{d}}_{{\bf{A}}}F({\bf{h}}) = 0$, it follows that $\overline{\partial}_{{\bf{A}}} F({\bf{h}}) = \partial_{{\bf{A}}}F({\bf{h}}) = 0$. Thus, following \cite[Eq. 2.34]{besse2007einstein}, from the Kahler identities above, we conclude that
\begin{equation}
\delta_{{\bf{A}}}F({\bf{h}}) = -\sqrt{-1} \big (\overline{\partial}_{{\bf{A}}} -  \partial_{{\bf{A}}}\big )\Lambda_{\omega}(F({\bf{h}})) = -J({\rm{d}}_{{\bf{A}}}\Lambda_{\omega}F({\bf{h}})),
\end{equation}
Therefore, we obtain
\begin{equation}
\label{YMisHYM}
\delta_{{\bf{A}}}F({\bf{h}}) = 0 \ \ \ \iff \ \ {\rm{d}}_{\bf{A}} {\rm{K}}({\bf{E}},{\bf{h}}) = 0,
\end{equation}
see for instance \cite[p. 6]{donaldson1985anti}. Now we consider the following definition.
\begin{definition}
A Hermitian structure ${\bf{h}}$ on a holomorphic vector bundle ${\bf{E}} \to (X,\omega)$ is Hermite-Einstein (or Hermitian-Yang-Mills) if 
\begin{equation}
\label{YMhomo}
{\rm{K}}({\bf{E}},{\bf{h}}) = \nu_{[\omega]}({\bf{E}})1_{{\bf{E}}},
\end{equation}
such that $\nu_{[\omega]}({\bf{E}}) := (2\pi) n \big ( \int_{X}[\omega]^{n}\big )^{-1}\mu_{[\omega]}({\bf{E}})$.
\end{definition}
\begin{remark}
For a compact K\"{a}hler manifold we shall consider ${\rm{Vol}}(X,\omega) = \frac{1}{n!}\int_{X}\omega^{n}$. In this case, we have $\nu_{[\omega]}({\bf{E}}) = \frac{2\pi \mu_{[\omega]}({\bf{E}})}{(n-1)!{\rm{Vol}}(X,\omega)}$.
\end{remark}

As we see from Eq. (\ref{YMisHYM}), the Chern connection of a Hermite-Einstein structure is a Yang-Mills connection. In the homogeneous setting we have the following general result, see for instance \cite{RAMANAN1966159}, \cite{umemura1978theorem}, \cite{kobayashi1986homogeneous}.
\begin{theorem}
\label{Ramanan}
Let $G^{\mathbbm{C}}$ be a complex simply connected and simple Lie group. Given a homogeneous holomorphic vector bundle ${\bf{E}} \to X_{P}$, such that $\theta \colon P \to {\rm{GL}}(V)$ is an irreducible $P$-module, and fixed some $G$-invariant (integral) K\"{a}hler metric $\omega_{0}$ on $X_{P}$, then have that ${\bf{E}}$ is $[\omega_{0}]$-stable. In particular, we have that ${\bf{E}}$ admits a Hermite-Einstein structure.
\end{theorem}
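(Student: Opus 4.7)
My plan is to prove the theorem via the Kobayashi--Hitchin correspondence in the easy direction: construct an explicit $G$-invariant Hermite-Einstein metric, then deduce stability from Kobayashi's theorem (Hermite-Einstein implies polystable) together with indecomposability. This is the natural order in the homogeneous setting, since the existence of the invariant metric is nearly automatic, whereas a direct proof of stability would require checking slope inequalities for every torsion-free coherent subsheaf.

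\textbf{Step 1 (Invariant inner product on the fibre).} By Lemma \ref{irredparabolic}, $V = W(\lambda_0) \otimes \mathbbm{C}_{-\phi}$ is irreducible as an $\mathfrak{l}_{P}$-module (with $\mathfrak{u}_{P}^{+}$ acting trivially). Since $\mathfrak{k} = {\rm{Lie}}(K)$ is a compact real form of $\mathfrak{l}_{P}$, irreducibility as an $\mathfrak{l}_{P}$-module is equivalent to irreducibility as a $K$-module. Averaging any Hermitian inner product on $V$ over the compact group $K$ produces a $K$-invariant Hermitian inner product $\langle \cdot,\cdot \rangle_{V}$, unique up to positive scalar by Schur's lemma.

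\textbf{Step 2 ($G$-invariant Hermitian structure and mean curvature).} Using $\langle \cdot,\cdot \rangle_V$ on the model fibre, the usual construction for associated bundles yields a $G$-invariant Hermitian structure ${\bf{h}}$ on ${\bf{E}}_{\varrho} = G^{\mathbbm{C}} \times_{P} V$. Since both $\omega_{0}$ and ${\bf{h}}$ are $G$-invariant, the Chern curvature $F({\bf{h}}) \in \mathcal{A}^{1,1}({\rm{End}}({\bf{E}}_{\varrho}))$ is $G$-invariant, and therefore so is the endomorphism ${\rm{K}}({\bf{E}}_{\varrho},{\bf{h}}) = \sqrt{-1}\Lambda_{\omega_{0}}(F({\bf{h}})) \in \mathcal{A}^{0}({\rm{End}}({\bf{E}}_{\varrho}))$. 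Evaluated at the origin ${\rm{o}} \in X_{P}$, this is a $K$-equivariant endomorphism of $V$. By Schur's lemma, it is a scalar multiple of $1_{V}$, and by $G$-invariance the same scalar $c \in \mathbbm{R}$ propagates to all of $X_{P}$.

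\textbf{Step 3 (Identifying the constant).} Taking $\omega_{0}$-trace and integrating yields
\begin{equation}
c \cdot \rank({\bf{E}}_{\varrho}) \cdot {\rm{Vol}}(X_{P},\omega_{0}) = \int_{X_{P}} {\rm{tr}}\big ({\rm{K}}({\bf{E}}_{\varrho},{\bf{h}})\big )\frac{\omega_{0}^{n}}{n!} = \frac{2\pi}{(n-1)!}\int_{X_{P}}c_{1}({\bf{E}}_{\varrho}) \wedge [\omega_{0}]^{n-1},
\end{equation}
forcing $c = \nu_{[\omega_{0}]}({\bf{E}}_{\varrho})$. Hence ${\bf{h}}$ satisfies Eq. (\ref{YMhomo}) and is a Hermite-Einstein structure.

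\textbf{Step 4 (Stability).} The equivalence between homogeneous holomorphic vector bundles on $X_{P}$ and finite-dimensional $P$-modules is an equivalence of additive categories, so irreducibility of $(\varrho,V)$ implies indecomposability of ${\bf{E}}_{\varrho}$. By Kobayashi's theorem (\cite{Kobayashi+1987}), a holomorphic vector bundle admitting a Hermite-Einstein metric is $[\omega_{0}]$-polystable; combined with indecomposability this upgrades to $[\omega_{0}]$-stability, which is the claim. The harder direction (Hermite-Einstein from stability) given by the Kobayashi-Hitchin correspondence is then redundant here, since we produced the metric by hand.

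The step most likely to be delicate is verifying cleanly that the irreducible $P$-module $V$ remains irreducible under the compact subgroup $K = P \cap G$; this ultimately reduces to the identification $\mathfrak{k}^{\mathbbm{C}} \cong \mathfrak{l}_{P}$ together with the trivial $\mathfrak{u}_{P}^{+}$-action on $V$, but care must be taken with the center $\mathfrak{c}_{P}$ and the splitting in Lemma \ref{irredparabolic} to ensure no further reduction occurs under restriction to $K$.
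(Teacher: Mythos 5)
Your Steps 1--3 are correct and already deliver the ``in particular'' clause directly: since $\mathfrak{k}\otimes\mathbbm{C}=\mathfrak{l}_{P}$ and $K$ is connected, the irreducible $P$-module $V$ restricts to an irreducible $K$-module (the point you flag as delicate at the end is in fact the easy one, settled by the unitarian trick); averaging gives the $K$-invariant inner product, unique up to scale, the induced metric on $G\times_{K}V$ is $G$-invariant, and Schur's lemma applied to the $G$-invariant, ${\bf{h}}$-self-adjoint section ${\rm{K}}({\bf{E}}_{\varrho},{\bf{h}})$ evaluated at the origin forces it to equal a real constant times $1_{{\bf{E}}_{\varrho}}$, identified by Chern--Weil. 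This is essentially the construction in the references the paper cites (Ramanan, Umemura, Kobayashi); the paper itself offers no proof of this theorem, so producing the Hermite--Einstein metric by hand rather than via the hard direction of Kobayashi--Hitchin is the right move.

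The genuine gap is in Step 4, in the sentence ``the equivalence \ldots is an equivalence of additive categories, so irreducibility of $(\varrho,V)$ implies indecomposability of ${\bf{E}}_{\varrho}$.'' That equivalence is between $P$-modules and homogeneous bundles \emph{with $G^{\mathbbm{C}}$-equivariant morphisms}, so it only rules out equivariant direct-sum decompositions. The implication ``polystable $+$ indecomposable $\Rightarrow$ stable'' needs indecomposability in the category of all holomorphic bundles with all morphisms, i.e.\ the nonexistence of any (possibly non-equivariant) splitting, equivalently that $H^{0}(X_{P},{\rm{End}}({\bf{E}}_{\varrho}))$ contains no nontrivial idempotent. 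The forgetful functor from equivariant to ordinary bundles is not full, so equivariant indecomposability does not transfer, and closing this gap is precisely the nontrivial content of the Ramanan--Umemura--Kobayashi theorems. One standard repair: write the polystable decomposition as ${\bf{E}}_{\varrho}\cong\bigoplus_{j=1}^{r}{\bf{F}}_{j}^{\oplus n_{j}}$ with the ${\bf{F}}_{j}$ stable, pairwise non-isomorphic, of equal slope; each isotypic piece is the sum of all subsheaves isomorphic to ${\bf{F}}_{j}$, hence canonical, and connectedness of $G^{\mathbbm{C}}$ forces it to be a $G^{\mathbbm{C}}$-invariant subbundle, i.e.\ a $P$-submodule of $V$, so $r=1$ by irreducibility; one must then still exclude ${\bf{E}}_{\varrho}\cong{\bf{F}}^{\oplus n}$ with $n\geq 2$, e.g.\ by lifting the induced action of $G^{\mathbbm{C}}$ on $H^{0}({\rm{End}}({\bf{E}}_{\varrho}))\cong M_{n}(\mathbbm{C})$ through ${\rm{PGL}}_{n}(\mathbbm{C})$ using simple connectedness, exhibiting an equivariant splitting and a contradiction. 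Tellingly, your argument never uses the hypotheses that $G^{\mathbbm{C}}$ is simply connected and simple; they enter exactly in this missing step.
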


\section{Generalities on Spectral Geometry} 
\label{spectralsection}
In this section, we review some basic results related to the spectral geometry of flag varieties. For more details on the subject, we suggest \cite{chavel1984eigenvalues}, \cite{li2012geometric}, \cite{grigoryan2009heat}, \cite{aubin1998some}, \cite{takeuchi1994modern}, \cite{helgason2001differential}.

On a Riemannian manifold $(M,g)$, we have the Laplace-Beltrami operator ${\bf{\Delta}}_{g}$ acting on smooth functions in the following way
\begin{equation}
{\bf{\Delta}}_{g}f = {\rm{div}}(\nabla f) =  -\delta {\rm{d}}f,
\end{equation}
for every smooth function $f \in C^{\infty}(M)$. In local coordinates, we have ${\bf{\Delta}}_{g}$ described as follows
\begin{equation}
{\bf{\Delta}}_{g} = \frac{1}{\sqrt{\det(g)}}\sum_{i,j}\frac{\partial}{\partial x_{i}} \Bigg ( \sqrt{\det(g)}g^{ij}\frac{\partial}{\partial x_{j}}\Bigg),
\end{equation}
such that $g = (g_{ij})$. Let us suppose from now on that $M$ is compact. In this setting, considering the underlying weighted manifold $(M,g,\mu_{g})$, where $\mu_{g}$ denotes the induced Riemannian measure, there exists a complete orthonormal basis $\{\varphi_{0},\varphi_{1},\varphi_{2},\ldots\}$ for the space of square integrable real-valued measurable functions $L^{2}(M) = L^{2}(M,\mu_{g})$, consisting of smooth eigenfunctions of ${\bf{\Delta}}_{g}$, i.e., such that $\varphi_{j} \in C^{\infty}(M)$, and 
\begin{equation}
{\bf{\Delta}}_{g}\varphi_{j} + \lambda_{j}({\bf{\Delta}}_{g})\varphi_{j} = 0,
\end{equation}
for every $j =0,1,2, \dots$, with corresponding eigenvalues arranged in increasing order 
\begin{equation}
0 = \lambda_{0}({\bf{\Delta}}_{g}) < \lambda_{1}({\bf{\Delta}}_{g}) \leq \lambda_{2}({\bf{\Delta}}_{g}) \leq \cdots \uparrow +\infty,
\end{equation}
with each eigenvalue repeated according to its multiplicity. Notice that $\varphi_{0} = {\rm{Vol}}(M,g)^{-\frac{1}{2}}$, see for instance \cite{chavel1984eigenvalues}, \cite{li2012geometric}. In particular, for every $f \in L^{2}(M)$, we have the Parseval identities
\begin{equation}
f = \sum_{j = 0}^{+\infty}(f,\varphi_{j})\varphi_{j}, \ \ \ ||f||_{L^{2}}^{2} = \sum_{j = 0}^{+\infty}(f,\varphi_{j})^{2}.
\end{equation}
Since the operator ${\bf{\Delta}}_{g}$ is non-positive definite, it is frequently more convenient to work with the Dirichlet Laplace operator 
\begin{equation}
\mathcal{L}_{g} := - {\bf{\Delta}}_{g}.
\end{equation}
The Dirichlet Laplace operator $\mathcal{L}_{g}$ is self-adjoint non-negative definite operator in $L^{2}(M)$, such that
\begin{equation}
{\rm{Spec}}(\mathcal{L}_{g}) = \big\{\lambda_{j}({\bf{\Delta}}_{g}) \ \big | \ j = 1,2,\cdots \ \big\}.
\end{equation}
The standard elliptic theory asserts that there exists a Green’s function $G(x,y)$ for the Laplacian ${\bf{\Delta}}_{g}$, which can be defined as being the unique function satisfying the condition
\begin{equation}
{\bf{\Delta}}_{g}G(x,y) = \frac{1}{{\rm{Vol}}_{g}(M)} - \delta_{x}(y), \ \ \int_{M}G(x,y){\rm{d}}\mu_{g}(x) = 0,
\end{equation}
where $\delta_{x}(y)$ is the delta function at $x$. It is well-known that $G(x,y) = G(y,x)$ and that $G(x,y)$ is smooth on $(M \times M) \backslash \Delta(M)$, such that $\Delta(M) = \{(x,y) \in M \times M \ | \ y = x \}$. Moreover, we have that\footnote{Notice the $\dashint_{-}$ stands for $\frac{1}{\rm{Vol}(\cdot,g)} \int_{-}$} 
\begin{equation}
\varphi(x) = \dashint_{M}\varphi(y){\rm{d}}\mu_{g}(y) - \int_{M}G(x,y){\bf{\Delta}}_{g}\varphi(y){\rm{d}}\mu_{g}(y),
\end{equation}
for every $\varphi \in C^{2}(M)$, see for instance \cite{aubin1998some}. By means of the Green function $G(x,y)$ a solution of the Poisson's equation 
\begin{equation}
-{\bf{\Delta}}_{g}u = f, 
\end{equation}
for some $f \in C^{\infty}(M)$, such that $\int_{M}f{\rm{d}}\mu_{g} = 0$, can be described (up to some constant) by
\begin{equation}
u(x) =  \int_{M}G(x,y)f(y){\rm{d}}\mu_{g}(y) = \sum_{j = 1}^{+\infty}\frac{1}{\lambda_{j}}(f,\varphi_{j})\varphi_{j}(x), \ \ \forall x \in M.
\end{equation}
From above, we consider the operator $(-{\bf{\Delta}}_{g})^{-1} \colon C^{\infty}(M) \to C^{\infty}(M)$, such that 
\begin{equation}
(-{\bf{\Delta}}_{g})^{-1}f :=  \int_{M}G(x,y)f(y){\rm{d}}\mu_{g}(y),
\end{equation}
for every $f \in C^{\infty}(M)$. 

Let us introduce now some basic terminology, for more details we suggest \cite{grigoryan2009heat}, \cite{aubin1998some}. Given a compact weighted manifold $(M,g,\mu_{g})$, let $\mathcal{D}'(M)$ denotes the set of all distributions on the space $C^{\infty}(M)$ and let $L_{{\rm{loc}}}^{1}(M)$ denotes the space of locally integrable functions on $M$. In this setting, every $\psi \in L_{{\rm{loc}}}^{1}(M)$ defines a distribution $(\psi,-) \in \mathcal{D}'(M)$, such that 
\begin{equation}
(\psi,\varphi) = \int_{M}\psi(x)\varphi(x){\rm{d}}\mu_{g}(x), \ \ \ \ \forall \varphi \in C^{\infty}(M).
\end{equation}
In particular, we have 
\begin{equation}
L^{2}(M) \hookrightarrow L_{\rm{loc}}^{2}(M) \hookrightarrow L_{\rm{loc}}^{1}(M) \hookrightarrow \mathcal{D}'(M).
\end{equation}
From above, we define the distributional Laplacian of $\psi \in L_{{\rm{loc}}}^{1}(M)$ by means of the identity
\begin{equation}
({\bf{\Delta}}_{g}\psi,\varphi) = (\psi,{\bf{\Delta}}_{g}\varphi), \ \ \ \ \forall \varphi \in C^{\infty}(M).
\end{equation}
By definition, we have ${\bf{\Delta}}_{g}\psi \in \mathcal{D}'(M)$, for every $\psi \in L_{{\rm{loc}}}^{1}(M)$. In particular, we obtain the self-adjoint extension  ${\bf{\Delta}}_{g} \colon H^{2}(M) \to L^{2}(M)$, where $H^{2}(M)$ is the Sobolev space defined by the completion of $C^{\infty}(M)$ with respect to the inner product
\begin{equation}
\langle u,v \rangle_{H^{2}} = \int_{M}uv{\rm{d}}\mu_{g} + \int_{M}\langle \nabla^{2}u,\nabla^{2}v \rangle_{g}{\rm{d}}\mu_{g}.
\end{equation}
The Sobolev space $H^{2}(M)$ also can be characterized in terms of ${\rm{Spec}}(-{\bf{\Delta}}_{g})$, namely, we have
\begin{equation}
H^{2}(M) = \Big \{ f \in L^{2}(M,\mu_{g}) \ \ \Big | \ \ \sum_{j = 0}^{\infty}\lambda_{j}({\bf{\Delta}}_{g})^{2} (f,\varphi_{j})^{2} < + \infty \ \ \Big \},
\end{equation}
see for instance \cite[Remark 7.6]{lions2012non}, \cite[p. 134-139]{craioveanu2013old}.

\section{Proof of Theorem B}

In this subsection, we will prove Theorem \ref{Hcase1}. Let us establish some preliminary results concerning singular Hermitian structures on line bundles.

Consider the following definition.

\begin{definition}[Demailly, \cite{damailly1992singular}]
\label{defsingline}
Let $(X,\omega)$ be a compact K\"{a}hler manifold and ${\bf{L}} \in {\rm{Pic}}(X)$. We say that ${\bf{h}}$ is a singular Hermitian structure on ${\bf{L}}$ if for every trivializing open set $U \subset X$ we have 
\begin{equation}
{\bf{h}}|_{U} = {\rm{e}}^{\varphi}|\cdot|^{2},
\end{equation}
where $\varphi \in L_{{\rm{loc}}}^{1}(U)$ is called the local weight of ${\bf{h}}$.
\end{definition}

In the particular case that ${\bf{h}}(\varphi) := {\rm{e}}^{2\varphi}{\bf{h}}_{0}$, where $\varphi \in L_{{\rm{loc}}}^{1}(X)$ and ${\bf{h}}_{0}$ is a smooth Hermitian structure on ${\bf{L}}$, the curvature (1,1)-current of $({\bf{L}},{\bf{h}})$ (in the sense of distribution) is given by 
\begin{equation}
F({\bf{h}}) = F({\bf{h}}_{0}) - 2\partial \overline{\partial}\varphi. 
\end{equation}
see for instance \cite[\S 2.2]{cataldo1998singular}, \cite[\S 3]{raufi2015singular}. From above we define the mean curvature of a singular line bundle $({\bf{L}},{\bf{h}}(\varphi) = {\rm{e}}^{2\varphi}{\bf{h}}_{0})$ as being
\begin{equation}
{\rm{K}}({\bf{L}},{\bf{h}}(\varphi)):= \sqrt{-1}\Lambda_{\omega}(F({\bf{h}}_{0}) ) - {\bf{\Delta}}_{\omega}\varphi,
\end{equation}
such that the expression on the right-hand side above is understood in the sense of distribution. Fixed some $f \in L^{2}(X,\mu_{\omega})$, the prescribed mean curvature problem in this last setting can be formulated as the following PDE in sense of distribution
\begin{equation}
{\rm{K}}({\bf{L}},{\bf{h}}(\varphi)) = f, \ \ \ \ \ \ \varphi \in H^{2}(X).
\end{equation}
In the above setting, we have the following result.
\begin{lemma}
\label{lemmafund}
Let $(X,\omega)$ a compact K\"{a}hler manifold and ${\bf{L}} \in {\rm{Pic}}(X)$. If $f \in L^{2}(X,\mu_{\omega})$ satisfies
\begin{equation}
\frac{1}{2\pi}\dashint_{X}f(x){\rm{d}}\mu_{\omega}(x) = \frac{\deg_{\omega}({\bf{L}})}{(n-1)!{\rm{Vol}}(X,\omega)},
\end{equation}
then there exists a sequence of smooth Hermitian structures $\{{\bf{h}}_{n}\}$ on ${\bf{L}}$, such that 
\begin{equation}
{\rm{K}}({\bf{L}},{\bf{h}}_{n}) \overset{L^2}{\longrightarrow}  f \ \ \ \ {\text{and}} \ \ \ \ {\bf{h}}_{n} \overset{a.e.}{\longrightarrow} {\bf{h}}_{\infty},
\end{equation}
as $n \uparrow +\infty$, where ${\bf{h}}_{\infty}$ is a singular Hermitian structure on ${\bf{L}}$. Moreover, we have
\begin{equation}
{\rm{K}}({\bf{L}},{\bf{h}}_{\infty}) = f,
\end{equation}
in the sense of distributions. 
\end{lemma}
\begin{proof}
By fixing an orthonormal basis $\{\varphi_{0}, \varphi_{1}, \ldots \}$ of eigenfunctions of the Laplace-Beltrami operator $-{\bf{\Delta}}_{\omega}$ for $L^{2}(X, \mu_{\omega})$, we expand the target function as $f = \sum_{j=0}^{+\infty} {\bf{c}}_{j}(f) \varphi_{j}$. Assuming the necessary compatibility condition 
\begin{equation}
\frac{1}{2\pi}\dashint_{X}f(x){\rm{d}}\mu_{\omega}(x) = \frac{\deg_{\omega}({\bf{L}})}{(n-1)!{\rm{Vol}}(X,\omega)},
\end{equation}
we have ${\bf{c}}_{0}(f) = \frac{2\pi\deg_{\omega}({\bf{L}})}{(n-1)!{\rm{Vol}}(X,\omega)}$. The partial sums $f_{n} = \sum_{j=0}^{n} {\bf{c}}_{j}(f) \varphi_{j}$ are smooth and satisfy the compatibility condition for each $n$. The corresponding conformal weights are given by 
\begin{equation}
\psi_{n} = \sum_{j=1}^{n} \frac{{\bf{c}}_{j}(f)}{\lambda_{j}({\bf{\Delta}}_{\omega})} \varphi_{j} \in C^{\infty}(X),
\end{equation}
such that ${\bf{h}}_{n} = \exp(2\psi_{n}){\bf{h}}_{0}$. Here we can choose the reference metric ${\bf{h}}_{0}$, such that 
\begin{equation}
\sqrt{-1}\Lambda_{\omega}(F({\bf{h}}_{0}) ) = \frac{2\pi\deg_{\omega}({\bf{L}})}{(n-1)!{\rm{Vol}}(X,\omega)}.
\end{equation}
By construction, we have 
\begin{equation}
{\rm{K}}({\bf{L}},{\bf{h}}_{n}):= \sqrt{-1}\Lambda_{\omega}(F({\bf{h}}_{0}) ) - {\bf{\Delta}}_{\omega}\psi_{n} = f_{n}
\end{equation}
To establish the convergence of the sequence $\{\psi_{n}\}$, we consider the $H^{2}$-norm on $X$ defined by 
\begin{equation}
\|u\|_{H^{2}}^{2} = \int_{X} u^{2} {\rm{d}}\mu_{\omega} + \int_{X} |\nabla^{2}u|_{\omega}^{2} {\rm{d}}\mu_{\omega}.
\end{equation}
Consider now the Bochner identity
\begin{equation}
\label{bochner_identity}
\frac{1}{2} {\bf{\Delta}}_{\omega} (|\nabla u|_{\omega}^2) = |\nabla^2 u|_{\omega}^2 + {\rm{Ric}}(\nabla u, \nabla u) + \langle \nabla ({\bf{\Delta}}_{\omega} u), \nabla u \rangle_{\omega}.
\end{equation}
From this, we have
\begin{equation}
0  = \frac{1}{2}  \int_{X}{\bf{\Delta}}_{\omega} (|\nabla u|_{\omega}^2){\rm{d}}\mu_{\omega} =  \int_{X}|\nabla^2 u|_{\omega}^2{\rm{d}}\mu_{\omega} + \int_{X}{\rm{Ric}}(\nabla u, \nabla u){\rm{d}}\mu_{\omega} + \int_{X}\langle \nabla ({\bf{\Delta}}_{\omega} u), \nabla u \rangle_{\omega}{\rm{d}}\mu_{\omega}
\end{equation}
Observing that 
\begin{equation}
\int_{X}\langle \nabla ({\bf{\Delta}}_{\omega} u), \nabla u \rangle_{\omega}{\rm{d}}\mu_{\omega} = - \int_{X} ({\bf{\Delta}}_{\omega} u)^{2} {\rm{d}}\mu_{\omega},
\end{equation}
it follows that 
\begin{equation}
0 = \int_{X}|\nabla^2 u|_{\omega}^2{\rm{d}}\mu_{\omega} + \int_{X}{\rm{Ric}}(\nabla u, \nabla u){\rm{d}}\mu_{\omega} - \int_{X} ({\bf{\Delta}}_{\omega} u)^{2} {\rm{d}}\mu_{\omega}.
\end{equation}
Since $X$ is a compact manifold, the Ricci curvature is bounded from below. Thus, there exists a constant $\kappa \in \mathbbm{R}$, such that ${\rm{Ric}}(v, v) \geq \kappa |v|^2$, $\forall v \in TX$. Combining this with the last equality, we obtain the following inequality
\begin{equation}
\label{hessianbound}
\int_{X} |\nabla^{2} u|_{\omega}^{2} {\rm{d}}\mu_{\omega} \leq \int_{X} ({\bf{\Delta}}_{\omega} u)^{2} {\rm{d}}\mu_{\omega} + |\kappa| \int_{X} |\nabla u|_{\omega}^{2} {\rm{d}}\mu_{\omega}.
\end{equation}

To control the first-order term, we utilize the self-adjointness of the Laplacian and the integration by parts formula
\begin{equation}
\int_{X} |\nabla u|_{\omega}^{2} {\rm{d}}\mu_{\omega} = - \int_{X} u ({\bf{\Delta}}_{\omega} u) {\rm{d}}\mu_{\omega} \leq \int_{X} |u| |{\bf{\Delta}}_{\omega} u| {\rm{d}}\mu_{\omega}.
\end{equation}

Applying Young's inequality with $\epsilon > 0$ (e.g. \cite{evans2010partial}) to the right-hand side yields

\begin{equation}
\int_{X} |u| |{\bf{\Delta}}_{\omega} u| {\rm{d}}\mu_{\omega} \leq \epsilon \int_{X} ({\bf{\Delta}}_{\omega} u)^{2} {\rm{d}}\mu_{\omega} + \frac{1}{4\epsilon} \int_{X} u^{2} {\rm{d}}\mu_{\omega}.
\end{equation}

By substituting this back into Eq. \eqref{hessianbound} and choosing a sufficiently small $\epsilon$ (for instance, $\epsilon = (2|\kappa|)^{-1}$), we find:

\begin{equation}
\int_{X} |\nabla^{2} u|_{\omega}^{2} {\rm{d}}\mu_{\omega} \leq \left(1 + |\kappa|\epsilon \right) \int_{X} ({\bf{\Delta}}_{\omega} u)^{2} {\rm{d}}\mu_{\omega} + \frac{|\kappa|}{4\epsilon} \int_{X} u^{2} {\rm{d}}\mu_{\omega}.
\end{equation}

Therefore, there exists a constant $C = \max\{1 + |\kappa|\epsilon, \frac{|\kappa|}{4\epsilon}\}$, such that

\begin{equation}
\|u\|_{H^{2}}^{2} = \int_{X} u^2{\rm{d}}\mu_{\omega} + \int_{X} |\nabla^2 u|^2{\rm{d}}\mu_{\omega} \leq (1+C) \left( \int_{X} u^{2} {\rm{d}}\mu_{\omega} + \int_{X} ({\bf{\Delta}}_{\omega} u)^{2} {\rm{d}}\mu_{\omega} \right).
\end{equation}
From above, we have 
\begin{equation}
\|\psi_{n} - \psi_{m}\|_{H^{2}}^{2} \leq (1 + C) \left( \|\psi_{n} - \psi_{m}\|_{L^{2}}^{2} + \|{\bf{\Delta}}_{\omega}(\psi_{n} - \psi_{m})\|_{L^{2}}^{2} \right).
\end{equation}
Applying the Parseval-Plancherel identity, this bound becomes
\begin{equation}
\|\psi_{n} - \psi_{m}\|_{H^{2}}^{2} \leq (1+C) \sum_{j=m+1}^{n} \left( \frac{1}{\lambda_{j}({\bf{\Delta}}_{\omega})^{2}} + 1 \right) {\bf{c}}_{j}(f)^{2}.
\end{equation}
Since $\lambda_{j} \to +\infty$ and $f \in L^{2}(X)$, the right-hand side vanishes as $n, m \to +\infty$, proving that $\{\psi_{n}\}$ is a Cauchy sequence in $H^{2}(X)$. By the completeness of the Sobolev space, there exists a limit $\psi_{\infty} \in H^{2}(X)$ such that $\psi_{n} \to \psi_{\infty}$ strongly in $H^{2}$.

Since the embedding $H^{2}(X) \hookrightarrow L^{2}(X)$ is continuous, we have $\psi_{n} \to \psi_{\infty}$ in $L^{2}$. By the Riesz-Fischer theorem, there exists a subsequence (which we still denote by $\psi_{n}$) that converges a.e. to $\psi_{\infty}$. Consequently, we obtain

\begin{equation}
{\bf{h}}_{n} = \exp(2\psi_{n}){\bf{h}}_{0} \xrightarrow{a.e.} \exp(2\psi_{\infty}){\bf{h}}_{0} := {\bf{h}}_{\infty}.
\end{equation}

The limit structure ${\bf{h}}_{\infty}$ defines a singular Hermitian structure on ${\bf{L}}$ in the sense of Demailly, given that $\psi_{\infty} \in H^{2}(X) \subset L^{1}_{{\rm{loc}}}(X)$, see Definition \ref{defsingline}. 

Finally, we verify the curvature identity. Since the operator ${\bf{\Delta}}_{\omega} \colon H^{2}(X) \to L^{2}(X)$ is a bounded linear operator, the convergence $\psi_{n} \to \psi_{\infty}$ in $H^{2}$ implies that ${\bf{\Delta}}_{\omega}\psi_{n} \to {\bf{\Delta}}_{\omega}\psi_{\infty}$ in $L^{2}$. Taking the limit in the sense of distributions in the identity
\begin{equation}
{\rm{K}}({\bf{L}},{\bf{h}}_{n}) = \sqrt{-1}\Lambda_{\omega}(F({\bf{h}}_{0})) - {\bf{\Delta}}_{\omega}\psi_{n} = f_{n},
\end{equation}
and noting that $f_{n} \xrightarrow{L^{2}} f$, we conclude that
\begin{equation}
{\rm{K}}({\bf{L}},{\bf{h}}_{\infty}) = f,
\end{equation}
as an identity in $L^{2}(X)$ and, therefore, in the sense of distributions. This concludes the proof.
\end{proof}

\begin{definition}
Let ${\bf{E}} \to X$ be a holomorphic vector bundle over a complex manifold $X$. A singular Hermitian metric ${\bf{h}}$ on ${\bf{E}}$ is a measurable map from the base space $X$ to the space of non-negative Hermitian forms on the fibers.
\end{definition}

If $({\bf{E}},{\bf{h}}) \to X$ is a holomorphic vector bundle given by the tensor product of a smooth Hermitian vector bundle with a singular Hermitian line bundle, namely, ${\bf{h}} = {\bf{h}}_{0} \otimes {\bf{h}}_{{\rm{sing}}}$, such that 
\begin{equation}
({\bf{E}},{\bf{h}}) = ({\bf{E}}_{0},{\bf{h}}_{0}) \otimes ({\bf{L}},{\bf{h}}_{{\rm{sing}}}), 
\end{equation}
then we have a well-defined notion of curvature current. This notion is established via extension by density. Since ${\bf{h}}_{{\rm{sing}}}$ can be approximated by a sequence of smooth Hermitian structures $\{{\bf{h}}_{n}\}$ on ${\bf{L}}$, the smooth tensor product formula holds for every $n$, i.e.,
\begin{equation}
F({\bf{h}}_{0} \otimes {\bf{h}}_{n}) = F({\bf{h}}_{0}) \otimes 1_{{\bf{L}}} + 1_{{\bf{E}}_{0}} \otimes F({\bf{h}}_{n}),
\end{equation}
see for instance \cite{Kobayashi+1987}. Because the tensor product splitting isolates the singular sequence entirely within the scalar line bundle component, taking the limit as $n \uparrow +\infty$ in the sense of currents is analytically well-behaved. The right-hand side converges distributionally to 
\begin{equation}
F({\bf{h}}) = F({\bf{h}}_{0}) \otimes 1_{{\bf{L}}} + 1_{{\bf{E}}_{0}} \otimes F({\bf{h}}_{{\rm{sing}}}).
\end{equation}
Given that this limit is uniquely determined and analytically stable, we formally set the curvature of the singular tensor product to be exactly this distributional limit. 

Notice that, from the line bundle case (Definition \ref{defsingline}), we also obtain a well-defined notion of mean curvature. Now we can prove the Theorem \ref{Hcase1}.
\begin{proof}[Proof of Theorem \ref{Hcase1}]
Since we have 
\begin{equation}
\displaystyle \frac{1}{{\rm{rank}}({\bf{E}})} \int_{C}c_{1}({\bf{E}}) \in \mathbbm{Z}, \ \ \ \ \ \forall [C] \in {\rm{NE}}(X_{P})_{\mathbbm{Z}},
\end{equation}
we can suppose from Theorem \ref{theoremA} that
\begin{equation}
{\bf{E}} = {\bf{E}}_{0} \otimes {\bf{L}}_{0},
\end{equation}
such that ${\bf{L}}_{0} \in {\rm{Pic}}(X_{P})$ and $c_{1}({\bf{E}}_{0}) = 0$.

By considering ${\bf{h}} = {\bf{h}}_{{\bf{E}}_{0}} \otimes {\bf{h}}_{{\bf{L}}_{0}}$, where ${\bf{h}}_{{\bf{E}}_{0}}$ is some smooth Hermitian structure on ${\bf{E}}_{0}$ and ${\bf{h}}_{{\bf{L}}_{0}}$ is some smooth Hermitian structure on ${\bf{L}}$, we obtain
\begin{equation}
F({\bf{h}}) = F({\bf{h}}_{{\bf{E}}_{0}}) \otimes 1_{{\bf{L}}_{0}} + 1_{{\bf{E}}_{0}} \otimes F({\bf{h}}_{{\bf{L}}_{0}}).
\end{equation}
see for instance \cite{Kobayashi+1987}. Hence, it follows that 
\begin{equation}
\begin{split}
 {\rm{K}}({\bf{E}},{\bf{h}}) &= {\rm{K}}({\bf{E}}_{0},{\bf{h}}_{{\bf{E}}_{0}}) \otimes 1_{{\bf{L}}_{0}} + 1_{{\bf{E}}_{0}} \otimes {\rm{K}}({\bf{L}}_{0},{\bf{h}}_{{\bf{L}}_{0}}) \\
 & = {\rm{K}}({\bf{E}}_{0},{\bf{h}}_{{\bf{E}}_{0}}) \otimes 1_{{\bf{L}}_{0}} + {\rm{K}}({\bf{L}}_{0},{\bf{h}}_{{\bf{L}}_{0}}) (1_{{\bf{E}}_{0}} \otimes 1_{{\bf{L}}_{0}}),
 \end{split}
\end{equation}
Since $1_{{\bf{E}}}  = 1_{{\bf{E}}_{0}} \otimes 1_{{\bf{L}}_{0}}$, $c_{1}({\bf{E}}_{0}) = 0$ and ${\bf{E}}_{0}$ is irreducible, by choosing ${\bf{h}}_{YM}$ which solves the Hermite-Einstein equation on ${\bf{E}}_{0}$ (see Theorem \ref{Ramanan}), we have the following
\begin{equation}
\label{meanHR}
{\rm{K}}({\bf{E}},{\bf{h}}) = \big (\nu_{[\omega_{0}]}({\bf{E}}_{0}) + {\rm{K}}({\bf{L}}_{0},{\bf{h}}_{{\bf{L}}_{0}}) \big ) 1_{{\bf{E}}} =  {\rm{K}}({\bf{L}}_{0},{\bf{h}}_{{\bf{L}}_{0}}) 1_{{\bf{E}}}.
\end{equation}
Observing that $\mu_{[\omega_{0}]}({\bf{E}}) = \mu_{[\omega_{0}]}({\bf{L}}_{0}) = \deg_{\omega_{0}}({\bf{L}}_{0})$, since 
\begin{equation}
\frac{1}{2\pi}\dashint_{X_{P}}f(x){\rm{d}}\mu_{\omega_{0}}(x) = \frac{\mu_{[\omega_{0}]}({\bf{E}})}{(n-1)!{\rm{Vol}}(X_{P},\omega_{0})} = \frac{\deg_{\omega_{0}}({\bf{L}}_{0})}{(n-1)!{\rm{Vol}}(X_{P},\omega_{0})}
\end{equation}
it follows from Lemma \ref{lemmafund} that there exists a sequence of smooth Hermitian structures $\{{\bf{q}}_{n}\}$ on ${\bf{L}}_{0}$, such that 
\begin{equation}
{\rm{K}}({\bf{L}}_{0},{\bf{q}}_{n}) \overset{L^2}{\longrightarrow}  f \ \ \ \ {\text{and}} \ \ \ \ {\bf{q}}_{n} \overset{a.e.}{\longrightarrow} {\bf{q}}_{\infty},
\end{equation}
as $n \uparrow +\infty$, where ${\bf{q}}_{\infty}$ is a singular Hermitian structure on ${\bf{L}}_{0}$. Moreover, we have
\begin{equation}
{\rm{K}}({\bf{L}},{\bf{q}}_{\infty}) = f,
\end{equation}
in the sense of distributions. From above, we set
\begin{equation}
{\bf{h}}_{n} := {\bf{h}}_{YM} \otimes {\bf{q}}_{n}
\end{equation}
for every $n = 1,2,\ldots$. By construction, we have ${\bf{h}}_{n}  \overset{a.e.}{\longrightarrow}  {\bf{h}}_{YM} \otimes {\bf{q}}_{\infty} := {\bf{h}}_{\infty}$, and 
\begin{equation}
\underbrace{{\rm{K}}({\bf{E}},{\bf{h}}_{n}) = {\rm{K}}({\bf{L}}_{0},{\bf{q}}_{n})1_{{\bf{E}}}}_{\text{Eq. (\ref{meanHR})}} \overset{L^2}{\longrightarrow}  f1_{{\bf{E}}}, 
\end{equation}
as $n \uparrow +\infty$. In fact, notice that 
\begin{equation}
||{\rm{K}}({\bf{E}},{\bf{h}}_{n}) - f1_{{\bf{E}}}||_{L^{2}}^{2} = \int_{X_{P}}{\rm{tr}} \big ( ({\rm{K}}({\bf{E}},{\bf{h}}_{n}) - f1_{{\bf{E}}})^{2}\big){\rm{d}}\mu_{\omega_{0}} = \rank{({\bf{E}})} ||{\rm{K}}({\bf{L}}_{0},{\bf{q}}_{n}) - f||_{L^{2}}^{2},
\end{equation}
thus $\lim_{n \to +\infty}||{\rm{K}}({\bf{E}},{\bf{h}}_{n}) - f1_{{\bf{E}}}||_{L^{2}} = 0$. Moreover, it follows from Lemma \ref{lemmafund} that 
\begin{equation}
{\rm{K}}({\bf{E}},{\bf{h}}_{\infty}) = {\rm{K}}({\bf{L}}_{0},{\bf{q}}_{\infty})1_{{\bf{E}}} = f1_{{\bf{E}}},
\end{equation}
in the sense of distributions, which concludes the proof.
\end{proof}

\begin{example}[{\bf{Symmetric Square of Spinor Bundle}}]
\label{examplesingular}
Consider the symmetric square of the spinor bundle  $S^{2}({\bf{\Sigma}}) \to Q_{5}$ as in Example \ref{Squarespinorbundle}. Since
\begin{equation}
S^{2}({\bf{\Sigma}}) = {\bf{E}}_{0} \otimes {\bf{L}}_{0}
\end{equation}
with ${\bf{L}}_{0} \in {\rm{Pic}}(Q_{5})$ and $c_{1}({\bf{E}}_{0}) = 0$, by Theorem B, given any $f \in L^{2}(Q_{5}, \mu_{\omega_{0}})$ satisfying the compatibility condition:
\begin{equation}
\frac{1}{2\pi}\dashint_{Q_{5}}f(x){\rm{d}}\mu_{\omega_{0}}(x) = \frac{\mu_{[\omega_{0}]}(S^{2}({\bf{\Sigma}}))}{4!{\rm{Vol}}(Q_{5},\omega_{0})},
\end{equation}
where $\omega_{0}$ is some ${\rm{Spin}}(7,\mathbb{C})$-invariant K\"{a}hler metric on $Q_{5}$, there exists a singular Hermitian structure ${\bf{h}}_{\infty}$ on $S^{2}({\bf{\Sigma}})$ such that its mean curvature current satisfies
\begin{equation}
{\rm{K}}(S^{2}({\bf{\Sigma}}), {\bf{h}}_{\infty}) = f1_{S^{2}({\bf{\Sigma}})}.
\end{equation}
Let us construct an explicit example of $f \in L^{2}(Q_{5}, \mu_{\omega_{0}})$ satisfying the desired condition. Fix a point $p_{0} \in Q_{5}$ and consider the Riemannian distance function $d(\cdot,p_{0}) \colon Q_{5} \to \mathbbm{R}$. From this, by taking $s >0$, we set
\begin{center}
$\displaystyle g(x) := \frac{1}{d(x,p_{0})^{s}}$, \ \ \ $x \in Q_{5}$.
\end{center}
The function $g$ is certainly not continuous at $p_{0} \in Q_{5}$. Choosing normal coordinates around $p_{0}$, since $|g|^{2} = r^{-2s}$ within a small geodesic ball $B_{\epsilon}(p_0)$, we have 
\begin{align}
    \int_{B_\epsilon(p_0)}|g|^{2}{\rm{d}}\mu_{\omega_{0}} = \int_{B_\epsilon(p_0)} \frac{1}{r^{2s}} {\rm{d}}\mu_{\omega_{0}} &= \int_{S^{9}} \int_{0}^\epsilon \frac{1}{r^{2s}} (1 + \mathcal{O}(r^{2})) r^{9} {\rm{d}}r {\rm{d}}\phi\\
    &= {\rm{Vol}}(S^{9}) \int_{0}^\epsilon ( r^{9 - 2s} + \mathcal{O}(r^{11 - 2s})) {\rm{d}}r.
\end{align}

Since the perturbation introduced by the curvature, $\mathcal{O}(r^{11- 2s})$, has a strictly higher power of $r$ (i.e., it converges even more readily at $r=0$), the integrability is dictated exclusively by the principal term
\begin{equation}
    \int_0^\epsilon r^{9 - 2s}{\rm{d}}r,
\end{equation}
Therefore, observing that 
\begin{center}
$9 - 2s > -1 \iff s < 5$,
\end{center}
we conclude that $g \in L^{2}(Q_{5},{\rm{d}}\mu_{\omega_{0}})$ for every $s < 5$. Denoting 
\begin{equation}
c_{0}(S^{2}({\bf{\Sigma}})): = 2 \pi \frac{\mu_{[\omega_{0}]}(S^{2}({\bf{\Sigma}}))}{4!{\rm{Vol}}(Q_{5},\omega_{0})} - \dashint_{Q_5}\frac{1}{d(y,p_{0})^{s}}{\rm{d}}\mu_{\omega_{0}}(y),
\end{equation}
we set
\begin{equation}
f(x) := \frac{1}{d(x,p_{0})^{s}} + c_{0}(S^{2}({\bf{\Sigma}})), \ \ \ x \in Q_{5},
\end{equation}
for some $\alpha < 5$. By construction, we have $f \in L^{2}(Q_{5},{\rm{d}}\mu_{\omega_{0}})$ and 
\begin{equation}
\frac{1}{2\pi}\dashint_{Q_5}f(x){\rm{d}}\mu_{\omega_{0}}(x) = \frac{\mu_{[\omega_{0}]}(S^{2}({\bf{\Sigma}}))}{4!{\rm{Vol}}(Q_{5},\omega_{0})}.
\end{equation}
Hence, from Theorem \ref{Hcase1}, we have a singular Hermitian metric ${\bf{h}}_{\infty}$ on $S^{2}({\bf{\Sigma}}) \to Q_{5}$, such that 
\begin{equation}
{\rm{K}}(S^{2}({\bf{\Sigma}}),{\bf{h}}_{\infty}) = \Bigg (\frac{1}{d(x,p_{0})^{s}} + c_{0}(S^{2}({\bf{\Sigma}}))\Bigg )1_{S^{2}({\bf{\Sigma}})}.
\end{equation}
\end{example}

\begin{remark}
Given a homogeneous holomorphic vector bundle ${\bf{E}} \to X_{P}$, defined by some irreducible $P$-module, fixed some $G$-invariant (integral) K\"{a}hler metric $\omega_{0}$ on $X_{P}$, such that 
\begin{equation}
\displaystyle \frac{1}{{\rm{rank}}({\bf{E}})} \int_{C}c_{1}({\bf{E}}) \in \mathbbm{Z}, \ \ \ \ \ \forall [C] \in {\rm{NE}}(X_{P})_{\mathbbm{Z}},
\end{equation}
for every collection of points $p_{1},\ldots,p_{m} \in X_{P}$, from a similar argument as in Example \ref{examplesingular}, we have a singular Hermitian metric ${\bf{h}}_{\infty}$ on ${\bf{E}}$ satisfying
\begin{equation}
{\rm{K}}({\bf{E}},{\bf{h}}_{\infty}) = \Bigg (\sum_{j = 1}^{m}\frac{1}{d(x,p_{j})^{s_{j}}} + C_{0} \Bigg )1_{{\bf{E}}}
\end{equation}
where $0<s_{j} < \dim_{\mathbbm{C}}(X_{P})$, $\forall j= 1,\ldots,m$, and $C_{0}$ is a topological constant depending on ${\bf{E}}$.
\end{remark}

In order to obtain Corollary \ref{corollarysing}, we consider the following technical lemma.

\begin{lemma}
\label{intdistance}
Let $(M, g)$ be a compact Riemannian manifold of real dimension $n$, and let $N \subset M$ be a smooth closed submanifold of real codimension $k$ (so that $\dim_{\mathbbm{R}} N = n - k$). For any real exponent $s$, we have 
\begin{equation}
f(x) := \frac{1}{d(x,N)^{s}} \in L^{2}(M) \ \ \text{if and only if} \ \ s < \frac{k}{2}.
\end{equation}
\end{lemma}

\begin{proof}
By definition, $f \in L^{2}(M)$ if and only if 
\begin{center}
$ \displaystyle \int_{M} \frac{1}{d(x,N)^{2s}} {\rm{d}}\mu_{g}(x) < \infty.$
\end{center}
Since $M$ is compact and $d(x,N)$ is continuous and strictly positive on $M \backslash N$, the integrand is bounded on any compact subset disjoint from $N$. Therefore, the integrability depends entirely on the behavior of the function in an arbitrarily small neighborhood of $N$.

By the tubular neighborhood theorem, there exists a sufficiently small $\epsilon > 0$ such that the tubular neighborhood \cite{hirsch2012differential} defined by $T_{\epsilon} := \{x \in M \mid d(x,N) < \epsilon\}$ is diffeomorphic to an $\epsilon$-ball bundle inside the normal bundle $\mathcal{N}(N)$. 
\begin{figure}[H]
\includegraphics[scale = .22]{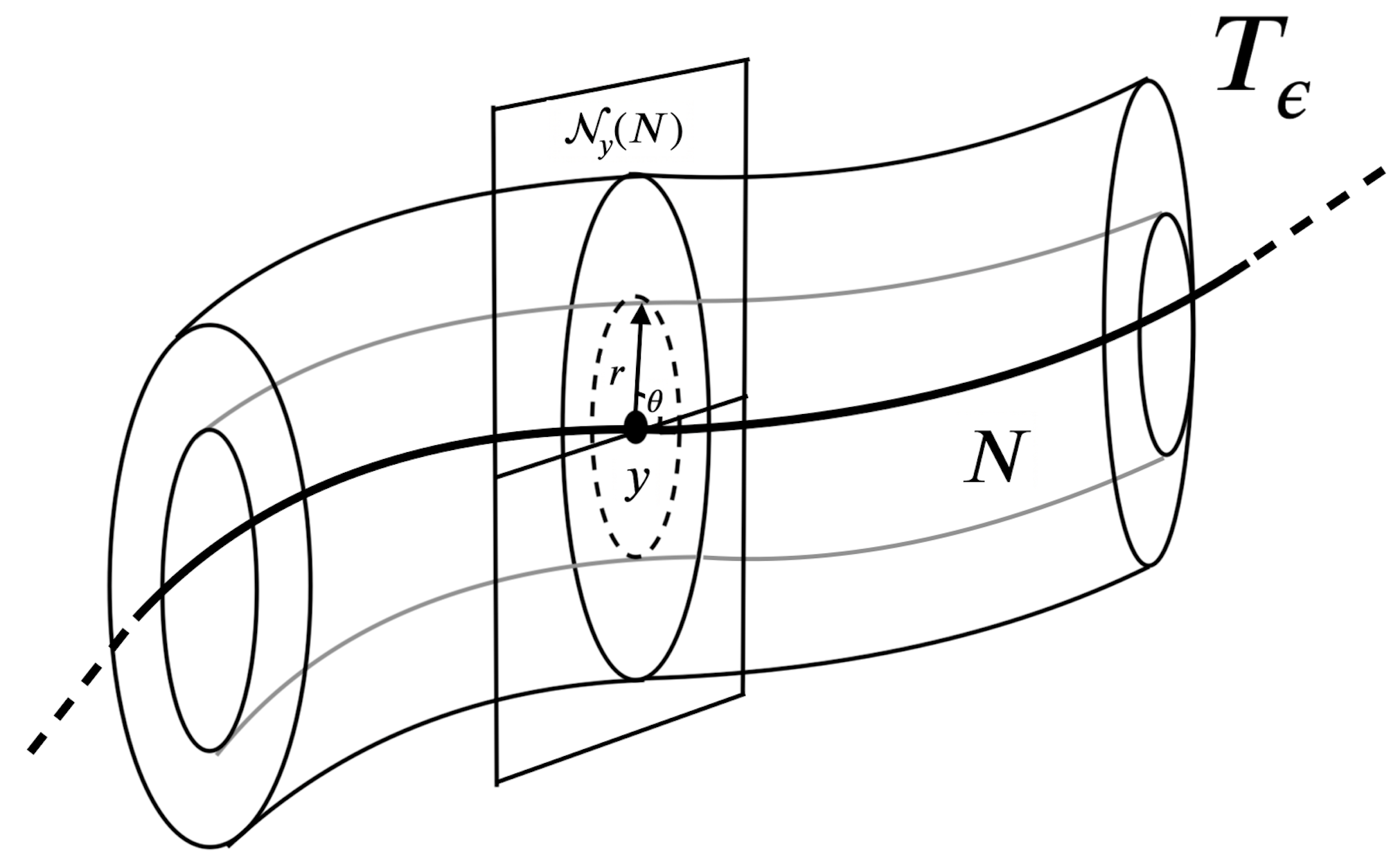}
\caption{Fermi coordinates $(y, r, \theta)$ on a tubular neighborhood $N \subset T_{\epsilon} \subset M$.}
\label{Fermi}
\end{figure}
We split the global integral into two disjoint regions:
\begin{center}
$\displaystyle \int_{M} \frac{1}{d(x,N)^{2s}} {\rm{d}}\mu_{g}(x)  = \int_{M - T_{\epsilon}} \frac{1}{d(x,N)^{2s}} {\rm{d}}\mu_{g}(x)  + \int_{T_{\epsilon}} \frac{1}{d(x,N)^{2s}} {\rm{d}}\mu_{g}(x).$
\end{center}
For the exterior region $M - T_{\epsilon}$, we have $d(x,N) \geq \epsilon > 0$. Since $M$ is compact, its total volume is finite ($\mu_{g}(M)  < \infty$), which guarantees the convergence of the first term
\begin{center}
$\displaystyle \int_{M - T_{\epsilon}} \frac{1}{d(x,N)^{2s}} {\rm{d}}\mu_{g}(x)  \leq \frac{\mu_{g} (M - T_{\epsilon})}{\epsilon^{2s}} < \infty.$
\end{center}
Thus, global $L^2$ integrability reduces strictly to the convergence of the integral over the tube $T_{\epsilon}$.

Following \cite[\S 3.3]{gray2003tubes}, inside $T_{\epsilon}$, we introduce adapted geodesic polar coordinates $(y, r, \theta)$, where $y \in N$, $r = d(x,N) \in [0, \epsilon)$ is the radial geodesic distance to $N$, and $\theta \in S^{k-1}$ parameterizes the angular directions in the normal fibers (Figure \ref{Fermi}). In these coordinates, the Riemannian volume form decomposes as
\begin{center}
${\rm{d}}\mu_{g} = \Theta(y, r, \theta) \cdot r^{k-1} \, dr \wedge {\rm{dvol}}_{S^{k-1}} \wedge {\rm{dvol}}_{N},$
\end{center}
where ${\rm{dvol}}_{N}$ and ${\rm{dvol}}_{S^{k-1}}$ are the canonical volume forms of $N$ and the unit sphere $S^{k-1}$, respectively, and $\Theta(y, r, \theta)$ is the smooth density factor (the Jacobian of the exponential map). 

Applying Fubini's theorem, the integral over $T$ becomes
\begin{center}
$\displaystyle \int_{T_{\epsilon}} \frac{1}{d(x,N)^{2s}} {\rm{d}}\mu_{g}(x) = \int_{N} \int_{S^{k-1}} \int_{0}^{\epsilon} \frac{1}{r^{2s}} \, \Theta(y, r, \theta) \cdot r^{k-1}{\rm{d}}r {\rm{dvol}}_{S^{k-1}} {\rm{dvol}}_{N}.$
\end{center}
Since $\Theta(y, r, \theta)$ is smooth on the compact domain $\overline{T_{\epsilon}}$ and $\lim_{r \to 0} \Theta = 1$, the extreme value theorem ensures the existence of uniform global bounds $C_{1}, C_{2} > 0$ such that $C_1 \leq \Theta(y, r, \theta) \le C_2$. This allows us to bound the integral from above and below
\begin{center}
$\displaystyle C_{1} \cdot K \int_{0}^{\epsilon} r^{k - 2s - 1} {\rm{d}}r \leq \int_{T_{\epsilon}} \frac{1}{d(x,N)^{2s}} {\rm{d}}\mu_{g}(x) \le C_2 \cdot K \int_{0}^{\epsilon} r^{k - 2s - 1}{\rm{d}}r,$
\end{center}
where $K = \operatorname{Vol}(N) \cdot \operatorname{Vol}(S^{k-1})$ is a finite positive constant because both $N$ and $S^{k-1}$ are compact manifolds.

The remaining radial integral $\int_{0}^{\epsilon} r^{p} \, dr$ at the origin converges if and only if the exponent satisfies $p > -1$. Applying this condition to our specific exponent yields
\begin{center}
$ \displaystyle k - 2s - 1 > -1 \iff k - 2s > 0 \iff 2s < k \iff s < \frac{k}{2}$
\end{center}
Hence, the integral converges if and only if $s < \frac{k}{2}$, which completes the proof.
\end{proof}

From the above lemma and Theorem \ref{Hcase1} we can prove Corollary \ref{corollarysing}.

\begin{proof}[Proof of Corollary \ref{corollarysing}] Given a analytic smooth subvariety $Y \subset X_{P}$, since $Y$ is a closed smooth complex submanifold of $X_{P}$, we have from Lemma \ref{intdistance} that 
\begin{center}
$g := d(\cdot,Y)^{-s} \in L^{2}(X_{P},\mu_{\omega_{0}}),$ \ \ \ \ $\forall 0  <s < {\rm{codim}}(Y)$, 
\end{center}
for some invariant integral K\"{a}hler metric $\omega_{0}$ on $X_{P}$, where ${\rm{codim}}(Y) = \dim_{\mathbbm{C}}(X_{P}) - \dim_{\mathbbm{C}}(Y)$. 

Given a homogeneous holomorphic vector bundle ${\bf{E}} \rightarrow X_{P}$, defined by some irreducible $P$-module, satisfying
\begin{equation}
    \frac{1}{\rm{rank}({\bf{E}})}\int_{C}c_{1}({\bf{E}})\in\mathbb{Z}, \quad \forall[C]\in {\rm{NE}}(X_{P})_{\mathbbm{Z}},
\end{equation}
we set
\begin{equation}
 C_{0} := 2 \pi \frac{\mu_{[\omega_{0}]}({\bf{E}})}{(n-1)!{\rm{Vol}}(X_{P},\omega_{0})} - \dashint_{X_{P}}\frac{1}{d(u,Y)^{s}}{\rm{d}}\mu_{\omega_{0}}(u).
\end{equation}
Given $0  <s < {\rm{codim}}(Y)$, we have 
\begin{equation}
f(x) = \frac{1}{d(x,Y)^{s}} + C_{0} \in L^{2}(X_{P},\mu_{\omega_{0}}),
\end{equation}
satisfying
\begin{equation}
\frac{1}{2\pi}\dashint_{X_{P}}f {\rm{d}}\mu_{\omega_{0}} = \frac{\mu_{[\omega_{0}]}({\bf{E}})}{(n-1)!{\rm{Vol}}(X_{P},\omega_{0})}.
\end{equation}
Therefore, from Theorem \ref{Hcase1}, there exists a singular Hermitian structure ${\bf{h}}_{\infty}$ on ${\bf{E}}$ such that its mean curvature current satisfies
\begin{equation}
    {\rm{K}}({\bf{E}},{\bf{h}}_{\infty}) = \Bigg (\frac{1}{d(x,Y)^{s}} + C_{0} \Bigg) 1_{{\bf{E}}},
\end{equation}
in the sense of distributions, which concludes the proof.
\end{proof}

\begin{example}
Consider the flag variety 
\begin{equation}
X_{P} = {\rm{Spin}}(8,\mathbbm{C})/P_{I}
\end{equation}
as in the Example \ref{examplespin}. Given a homogeneous holomorphic vector bundle 
\begin{center}
${\bf{E}}_{\theta} = {\rm{Spin}}(8,\mathbbm{C}) \times_{P}W(\lambda) \rightarrow X_{P}$ 
\end{center}
defined by an irreducible representation of $\theta \colon P \to {\rm{GL}}(W(\lambda))$, with highest weight $\lambda = \lambda_{s} + \lambda_{c}$, such that $\lambda_{s} \in \Lambda_{\mathfrak{s}_{P}}^{+}$ and $\lambda_{c} \in \Lambda_{P}$, let us suppose that 
\begin{equation}
\frac{\langle \lambda_{s},\alpha_{1}^{\vee} \rangle + 2\langle \lambda_{s},\alpha_{2}^{\vee} \rangle}{3} \in \mathbb{Z}. 
\end{equation}
In this situation, as we have seen in Example \ref{examplespin}, it follows that 
\begin{equation}
\displaystyle \frac{1}{{\rm{rank}}({\bf{E}}_{\theta})} \int_{C}c_{1}({\bf{E}}_{\theta}) \in \mathbbm{Z}, \ \ \ \ \ \forall [C] \in {\rm{NE}}(X_{P})_{\mathbbm{Z}}.
\end{equation}
Consider the distinguished smooth rational curves 
\begin{equation}
\mathbbm{P}_{\alpha_{j}} = \overline{\exp(\mathfrak{g}_{-\alpha_{j}}){\rm{o}}} \subset X_{P}, \ \ j = 3,4.
\end{equation}
By taking 
\begin{center}
$0 < a,b < {\rm{codim}}(\mathbbm{P}_{\alpha_{j}}) = \dim_{\mathbbm{C}}(X_{P}) - 1 = 9-1 = 8$, 
\end{center}
from Corollary \ref{corollarysing} we have a singular Hermitian structure ${\bf{h}}_{\infty}$ on ${\bf{E}}_{\theta}$ such that its mean curvature current satisfies
\begin{equation}
    {\rm{K}}({\bf{E}}_{\theta},{\bf{h}}_{\infty}) = \Bigg (\frac{1}{d(x,\mathbbm{P}_{\alpha_{2}})^{a}} + \frac{1}{d(x,\mathbbm{P}_{\alpha_{3}})^{b}}+ C_{0} \Bigg) 1_{{\bf{E}}_{\theta}},
\end{equation}
in the sense of distributions.
\end{example}

\appendix

\subsection*{Data Availability} Data sharing not applicable to this article as no datasets were generated or analysed during the current study.

\subsection*{Conflict of interest statement} The author declares that there is no conflict of interest.

\bibliographystyle{alpha}
\bibliography{biblio}

\end{document}